\newcommand{\shrinkmargins}[1]{
  \addtolength{\textheight}{#1\topmargin}
  \addtolength{\textheight}{#1\topmargin}
  \addtolength{\textwidth}{#1\oddsidemargin}
  \addtolength{\textwidth}{#1\evensidemargin}
  \addtolength{\topmargin}{-#1\topmargin}
  \addtolength{\oddsidemargin}{-#1\oddsidemargin}
  \addtolength{\evensidemargin}{-#1\evensidemargin}
  }
\DeclareMathOperator{\Hom}{Hom}
\DeclareMathOperator{\Spf}{Spf}
\DeclareMathOperator{\Tor}{Tor}
\DeclareMathOperator{\Map}{Map}
\DeclareMathOperator{\Ext}{Ext}
\newcommand{\BP}[1]{BP \langle {#1} \rangle}
\newcommand{\field}[1]{\mathbb{#1}}
\newcommand{\Z}{\field{Z}}
\newcommand{\N}{\field{N}}
\newcommand{\F}{\field{F}}
\newcommand{\W}{\field{W}}
\newcommand{\R}{\field{R}}
\newcommand{\C}{\field{C}}
\newcommand{\G}{\field{G}}
\newcommand{\EE} {\mathcal{E}}
\newcommand{\UU} {\mathcal{U}}
\newcommand{\RR} {\mathcal{R}}
\newcommand{\id}{\mbox{id}}
\newcommand{\m}{\mathfrak{m}}
\newcommand{\tK}{\widetilde{K}}
\newcommand{\tQ}{\widetilde{Q}}
\newcommand{\tx}{\widetilde{x}}
\newcommand{\ty}{\widetilde{y}}
\newcommand{\tz}{\widetilde{z}}
\newcommand{\SqZ}{Sq_{\Z}}
\newcommand{\unk}{\underline{k}}
\newcommand{\unE}{\underline{E}}
\newcommand{\unK}{\underline{K}}
\newcommand{\beq}{\begin{displaymath}}
\newcommand{\eeq}{\end{displaymath}}
\newcommand{\beqn}{\begin{equation}}
\newcommand{\eeqn}{\end{equation}}
\theoremstyle{plain}
\newtheorem{thm}{Theorem}[section]
\newtheorem{prop}[thm]{Proposition}
\newtheorem{cor}[thm]{Corollary}
\newtheorem{lem}[thm]{Lemma}
\theoremstyle{definition}
\newtheorem{defn}[thm]{Definition}
\newtheorem{conj}[thm]{Conjecture}
\newtheorem{exmp}[thm]{Example}
\theoremstyle{remark}
\newtheorem{rem}[thm]{Remark}
\newtheorem{notation}[thm]{Notation}
\newtheorem{assump}[thm]{Assumption}
\title{Twisted Morava K-theory and E-theory}
\author{Hisham Sati and Craig Westerland}
\begin{document}

\bibliographystyle{amsalpha}

\maketitle

\begin{abstract}

For a class $H \in H^{n+2}(X; \Z)$, we define twisted Morava K-theory $K(n)^*(X; H)$ at the prime $2$, as well as an integral analogue.  We explore properties of this twisted cohomology theory, studying a twisted Atiyah-Hirzebruch spectral sequence, a universal coefficient theorem (in the spirit of Khorami).  We extend the construction to define twisted Morava E-theory, and  provide applications to string theory and M-theory.

\end{abstract}

\medskip
Examples of twisted cohomology theories have been growing in recent years.  One favorite example is the twisted K-theory $K^*(X; H)$ of a space $X$; here the twisting $H$ is usually taken to be an element $H \in H^3(X, \Z)$ (e.g., \cite{ar, abg, agg, as1, as2, BCMMS, dk, FHT, Ka, Ro, txl}).  
Periodic de Rham cohomology may be twisted by any odd degree cohomology class (see e.g. \cite{BSS, MW, teleman}), and Ando-Blumberg-Gepner \cite{abg} have recently shown that the theory $tmf$ of topological modular forms admits twistings by elements of $H^4(X, \Z)$.

Twistings of a suitably multiplicative (that is, $A_\infty$) cohomology theory $E^*$ are governed by a space $BGL_1 (E)$.  While universal in this respect, its topology is far from being transparent.  One may, however, summarize the above examples by 
saying that there exist essential, continuous maps
$$\begin{array}{cccc}
K(\Z, 3) \to BGL_1(K), & K(\R, 2n+1) \to BGL_1(H\R[u, u^{-1}]), & {\rm and} & K(\Z, 4) \to BGL_1(tmf),
\end{array}$$
where $u$ is the periodicity element
and $K(G, n)$ is an Eilenberg-MacLane space whose sole homotopy group is $G$ in dimension $n$.  Since these spaces represent the cohomology functor $H^n(-, G)$, such cohomology classes give rise to twistings of the indicated generalized cohomology theories.

One naturally begins to ask what sort of twistings other familiar cohomology theories support.  It is the purpose of this paper to investigate this question for Morava's extraordinary K-theories $K(n)$, their $2$-periodic variant, $K_n$, and E-theories $E_n$, introduced in \cite{morava}.  These cohomology theories exist for each prime number $p$ and $n \in \N$.  Their value on a point is 
$$K(n)_*=\F_{p}[v_n^{\pm 1}], \quad {K_n}_* = \F_{p^n}[ u^{\pm 1}], \quad \mbox{and } \quad {E_n}_* = \W(k)[[u_1, \dots, u_{n-1}]][ u^{\pm 1}].$$
Here, $|v_n| = 2(p^n-1)$, $|u_i| = 0$, $|u| = 2$, and $\W(k)$ is the ring of Witt vectors over a perfect field $k$ of characteristic $p$ (often taken to be $\F_{p^n}$).  Furthermore, these theories are complex-oriented, and support height $n$ formal group laws.  Constructed via homotopical methods, they do not as yet admit geometric descriptions akin to K-theory or cohomology.  Concomitantly, there is no geometry at hand to provide twisted versions of these theories (in contrast to K-theory, where the map $K(\Z, 3) \to BGL_1 K$ has a natural description in terms of tensor products of line bundles).

Nonetheless, we may approach the study of twistings of $K(n)$ (by cohomology classes) via homotopy theory.  As indicated above, we do so by studying the space of maps\footnote{This philosophy and some of the methods are resonant with the recent paper of Antieau-Gepner-G\'{o}mez \cite{agg}.} from $K(\Z, m)$ to $BGL_1 K(n)$ or $BGL_1 K_n$. We find that there are no nontrivial twistings by $H^m(X, \Z)$ when $m>n+2$.  More interestingly, we find that when $m=n+2$, there are many such twistings: 

\paragraph{Theorem 1.}
{\it For $2$-periodic Morava K-theory $K_n$, each component in the space of maps $K(\Z, n+2) \to BGL_1 K_n$ is contractible, and the set of components is canonically isomorphic to a group of algebra homomorphisms:
$$[K(\Z, n+2), BGL_1 K_n] \cong \Hom_{\textit{$K_{n*}$-alg}}({K_n}_*K(\Z, n+1), {K_n}_*).$$
Further, the latter group is isomorphic to the $p$-adic integers $\Z_p$.  The same holds for $K(n)$ at the prime $p=2$.} 

\vspace{2mm}

A representative $u:K(\Z, n+2) \to BGL_1 K_n$ of a particular topological generator of this group will be called the \emph{universal} twisting.  This allows us to define, for any space $X$ and class $H \in H^{n+2}(X)$, the \emph{twisted Morava K-theory} $K_n^*(X; H)$ (and $K(n)^*(X; H)$, when p=2).

We give two natural methods of computing these twisted cohomology groups.  First, we construct a twisted form of the Atiyah-Hirzebruch spectral sequence for $K(n)^*$ (Theorem \ref{ahss_thm}), and prove a formula for the first nontrivial differential in this spectral sequence.  Secondly, we prove a universal coefficient theorem (Theorem \ref{khorami_thm}) along the lines of Khorami's result \cite{khorami} for twisted K-theory.  This allows one to compute the twisted $K_n$-homology $K_{n *}(X; H)$ quite explicitly in terms of the \emph{untwisted} $K_n$-homology of the principal $K(\Z, n+1)$-bundle over $X$ defined by the class $H$.

While $K_n$ and $K(n)$ are only $A_\infty$-ring spectra, $E_n$ has an $E_\infty$-multiplication, so its space of units $GL_1 E_n$ deloops to a spectrum of units $gl_1 E_n$. A similar result holds in this setting:

\paragraph{Theorem 2.} \label{theorem2}
{\it For each $n\geq 1$, there are canonical isomorphisms
$$\pi_0 \Map_{E_\infty} (K(\Z, n+2), BGL_1 E_n) \cong [\Sigma^{n+1} H\Z, gl_1 E_n] \cong \Hom_{\textit{$E_{n*}$-alg}}({E_n}_*K(\Z, n+1), {E_n}_*),$$
and again the latter group is isomorphic to the $p$-adic integers $\Z_p$.}

\vspace{2mm}
Here $H \Z$ is the integral Eilenberg-MacLane spectrum.  Choosing a topological generator $\varphi_n: H\Z \to gl_1 E_n$, the map induced on infinite loop spaces allows us to twist $E_n$ by a class in $H^{n+2}$, as we do for $K_n$.  
We will also show that twisted Morava K-theory is, in some sense, 
a reduction of twisted Morava E-theory.

The proofs of Theorems 1 and 2 rely heavily on the computations of the Morava K-theory of Eilenberg-MacLane spaces, originally due to Ravenel-Wilson \cite{rw}, and recently revisited in Hopkins-Lurie \cite{ambidexterity}.  We compute the homotopy type of the space of these maps using an obstruction theory due to Robinson, Goerss-Hopkins, and others \cite{r, gh, gh2}; the Ravenel-Wilson computations yield a vanishing of the obstruction groups and an identification of the set of components in terms of a set of algebra homomorphisms.  Using \cite{ambidexterity} or the work of Buchstaber-Lazarev \cite{BL}, one may identify the $p$-divisible group associated to ${K_n}_*K(\Z, n+1)$ with the top exterior power $\Lambda^n G$ of the formal group $G$ of $K_n$.  Further, there is a non-canonical isomorphism $\Lambda^n G \cong \G_m$, from which the isomorphism of the group of twists with $\Z_p$ may be derived.  For those who are unenthusiastic about obstruction-theoretic arguments, we conjecture (see Conjecture \ref{conjecture} below) that $\varphi_n$ may be constructed directly using a splitting in the spaces constituting the Johnson-Wilson spectrum $\BP{n}$.

At the prime 2, Morava K-theory $K(n)$ is the mod 2 reduction of an ``integral\footnote{Really, 2-adic.} lift" $\tK(n)$ with coefficient 
ring $\tK(n)_*=\Z_2[v_n, v_n^{-1}]$.
It turns out that the twisted cohomology theory $K(n)^*(X; H)$ also lifts to an integral version $\tK(n)^*(X; H)$.  Since $\tK(n)$ is constructed from an appropriate choice of $E_n$ by killing certain generators, this twisting comes about as a direct consequence of the E-theory twistings.  The integral theory is also computable via an Atiyah-Hirzebruch spectral sequence.  We do not yet have an analogue of Khorami's theorem for the integral or E-theories; we expect that if such a result holds, its proof will be much harder than the mod 2 version, and more akin to Khorami's.

Nonetheless, the integral theory is much more suited to applications in physics. We conclude the paper with examples and applications motivated by physics within string theory and M-theory. 
In \cite{KS1} it was shown that an anomaly cancellation condition of the form $W_7=0$ 
can be interpreted as an orientation condition with respect to second integral Morava K-theory
at the prime 2, hence characterizing admissible spacetimes in M-theory. 
We show that in this setting, as well as in heterotic string theory, 
we get conditions of the form $W_7 + \alpha=0 \in H^7(X; \Z)$;
 we interpret this via the Atiyah-Hirzebruch spectral sequence as an orientation 
with respect to twisted second integral Morava K-theory of spacetime, thus providing 
a {\it twisted} version of the results in \cite{KS1}. We employ the full relative version
of twisted integral Morava K-theory. We will also make connection to twisted String structures
\cite{Wan} \cite{SSS3} and twisted Fivebrane structures \cite{SSS3}.

\tableofcontents

\vspace{3mm}
\noindent {\it Acknowledgements.} It will become quickly apparent that we owe a great debt to Ravenel-Wilson's remarkable computations \cite{rw} of the Morava K-theories of Eilenberg-MacLane spaces.  We would like to thank Matthew Ando, Vigleik Angeltveit, Anssi Lahtinen, Haynes Miller, Eric Peterson, Charles Rezk, and Steve Wilson for very helpful conversations on this material, and David Baraglia, Tyler Lawson, and Bryan Wang for comments on an earlier version of this article.  We also thank the referee for encouraging us to extend our original results to odd primes via $2$-periodic Morava K-theory. H. S. would like to thank the Department of Mathematics at the University of Melbourne and IHES, Bures-sur-Yvette,  
for hospitality during the work on this project.  His research is supported by 
NSF Grant PHY-1102218. C. W.  was supported by the Australian Research Council 
(ARC) under the 
Future Fellowship and Discovery Project schemes.

\section{Twistings of cohomology theories}

We will study twistings of a cohomology theory $E^*$.  For such a theory, we write $E$ for the associated omega spectrum; 
that is $E = (\unE_n)_{n \in \Z}$, where $\unE_n$ are topological space equipped with homeomorphisms $f: \unE_n \to \Omega \unE_{n+1}$.  
The cohomology theory is represented by $E$: the 
space $E^n(X) = [X, \unE_n]$ is the abelian group of homotopy classes of maps $X \to \unE_n$.

All of the cohomology theories that we will consider will have $E^*(X)$ a ring; this endows the spectrum $E$ 
with a multiplicative structure.  
In the case of complex K-theory and Morava E-theory the resulting spectrum is an $E_\infty$ ring spectrum 
(that is, homotopy commutative in a highly structured sense) while that of Morava K-theory is noncommutative, and is only an $A_\infty$-spectrum (that is, homotopy associative).

\subsection{Units of ring spectra}
\label{sec unit}

Now assume that $E$ is an $A_\infty$ ring spectrum.  The ring structure on the homotopy groups of $E$ allows one to define the 
topological space $GL_1 E$ of units in $E$ as the fiber
 product
$$\xymatrix{
GL_1 E \ar[r] \ar[d] & \Omega^\infty E = \unE_0 \ar[d]^-{\pi_0} \\
\pi_0(E)^{\times} \ar[r]_-{\subseteq} & \pi_0(E),
}$$
where $\pi_0(E)^{\times} \subseteq \pi_0(E)$ is the group of units of the ring $\pi_0(E)$.  

The $A_\infty$-structure on $E$ equips $GL_1 E$ with a grouplike $A_\infty$-multiplication.  This allows us to define a classifying space 
$BGL_1 E$.  We recall that $GL_1$ participates in an adjunction with the suspension spectrum functor (see (6.12) of \cite{abg}):
$$\pi_0 \Map_{A_\infty}(\Sigma^\infty Z_+, E) \cong \pi_0 \Map_{A_\infty}(Z, GL_1 E).$$
Here $Z$ is an $A_\infty$ space, and $\Map_{A_\infty}$ denotes the space of $A_\infty$ maps (of spaces on the right, and of spectra on the left).  This should be compared to \cite{abghr}, where an $E_\infty$ version (which we will also use in section \ref{e_section}) is proved.

It is worth pointing out that an $A_\infty$ map $f: Z \to GL_1 E$ defines a continuous map $Bf: BZ \to BGL_1E$ of 
classifying spaces.  If $Z$ is a loop space (that is, $Z \simeq \Omega BZ$), the homotopy class of $f$ may be 
recovered as the loop map $f \simeq \Omega Bf$. Indeed, there is a bijection between homotopy classes of such maps.  
Thus we may conclude:

\begin{prop} \label{maps_prop}

For a loop space $Z$, there is a natural bijection
$$\pi_0 \Map_{A_\infty}(\Sigma^\infty Z_+, E) \cong [BZ, BGL_1 E].$$

\end{prop}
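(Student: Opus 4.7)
The plan is to combine the already-displayed $(\Sigma^\infty(-)_+, GL_1)$ adjunction with the classical delooping equivalence between grouplike $A_\infty$-spaces and pointed connected spaces. By the displayed adjunction it suffices to produce a natural bijection
$$\pi_0 \Map_{A_\infty}(Z, GL_1 E) \cong [BZ, BGL_1 E],$$
in which both $Z$ (being a loop space) and $GL_1 E$ (from section \ref{sec unit}) are regarded as grouplike $A_\infty$-spaces.

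First I would define the forward map $[f] \mapsto [Bf]$ by applying the delooping functor to an $A_\infty$-map. For the inverse, send a pointed map $g\colon BZ \to BGL_1 E$ to the composite
$$Z \xrightarrow{\simeq} \Omega BZ \xrightarrow{\Omega g} \Omega BGL_1 E \xrightarrow{\simeq} GL_1 E,$$
which inherits an $A_\infty$-structure as a loop map between grouplike $A_\infty$-spaces. The hypothesis that $Z$ is a loop space is used precisely to identify it with $\Omega BZ$; the corresponding equivalence $GL_1 E \simeq \Omega BGL_1 E$ is the unit of the $(B,\Omega)$ adjunction on a grouplike object, which is always a weak equivalence. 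One then checks that these two constructions are mutually inverse by appealing to (a) the unit of $(B,\Omega)$ being a weak equivalence on grouplike $A_\infty$-spaces, giving $\Omega B f \simeq f$, and (b) the counit being a weak equivalence on pointed connected spaces, giving $B\Omega g \simeq g$. Since $GL_1 E$ is grouplike, $BGL_1 E$ is connected, so unpointed and pointed homotopy classes from the connected space $BZ$ agree, and the bracket $[BZ, BGL_1 E]$ is unambiguous.

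The substantive input is the recognition principle of Stasheff--May (or, equivalently, Lurie's $\infty$-categorical version): the delooping functor $B$ induces an equivalence from the homotopy category of grouplike $A_\infty$-spaces to that of pointed connected spaces, with quasi-inverse $\Omega$. This provides the identification $\pi_0\Map_{A_\infty}(Z, GL_1 E) \cong \pi_0 \Map_*(BZ, BGL_1 E)$. The main obstacle I expect is purely technical: ensuring the identifications $Z \simeq \Omega BZ$ and $GL_1 E \simeq \Omega BGL_1 E$ are made in a way compatible with the chosen model of $A_\infty$-spaces, so that the bijection is genuinely natural. Once the delooping equivalence is invoked in a fixed model, the remainder of the argument is a formal diagram chase.
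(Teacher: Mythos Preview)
Your proposal is correct and follows essentially the same argument as the paper: the paragraph immediately preceding the proposition reduces to $\pi_0 \Map_{A_\infty}(Z, GL_1 E) \cong [BZ, BGL_1 E]$ via the displayed $(\Sigma^\infty(-)_+, GL_1)$ adjunction, and then observes that $B$ and $\Omega$ give mutually inverse maps on homotopy classes when $Z$ is a loop space. Your write-up is more explicit about the recognition principle and the pointed-versus-unpointed issue, but the approach is the same.
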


\subsection{Twistings}
\label{sec twistings}

A general homotopy-theoretic description of twisted spectra can be found in \cite{abghr, MSi}.

\begin{defn}

For an $A_\infty$-ring spectrum $E$, a \emph{twisting} of $E$ by a space $Y$ will be a map $Y \to BGL_1 E$.  Two 
twistings are \emph{isomorphic} if they are homotopic.\footnote{We note that this gives the usual definition of isomorphic twists in the case of K-theory.}  We will write $tw_E(Y)$ for the set of isomorphism classes of twistings of $E$ by $Y$; that is,
$$tw_E(Y) := [Y, BGL_1E] = \pi_0(\Map(Y, BGL_1E)).$$

\end{defn}

Let $F$ denote the homotopy functor from topological spaces to sets represented by $Y$; that is, $F(X) = [X, Y]$.  Then 
any twisting $u \in tw_E(Y)$ defines a 
twisted cohomology theory (see \cite{MSi, abg}) 
$$u_*: \{(X, H), \; H \in F(X)\} \to \mbox{$E^*$-modules}.$$
We will write $E^*(X; H)$ for $u_*(X, H)$.

Let us outline the construction of $E^*(X; H)$, following \cite{abg}.  In order to minimize technology (e.g., the language of $\infty$-categories), we employ their definition using Proposition 6.14.  The composite $u \circ H: X \to BGL_1 E$ defines a principal $GL_1(E)$ bundle $p:  P_H \to X$.  The \emph{generalized Thom spectrum} is
$$X^{ H} := \Sigma^{\infty} (P_H)_+ \wedge_{\Sigma^{\infty} GL_1(E)_+} E,$$
where $E$ is made into a $\Sigma^{\infty} GL_1(E)_+$-module via the counit of the adjunction defining $GL_1$.  
Note that $X^{ H}$ is an $E$-module by action on the right.

\begin{defn} \label{twist_defn}

The \emph{$n^{\rm th}$ $H$-twisted $E$-(co)homology groups of $X$}, are defined to be the homotopy groups
$$\begin{array}{ccc}
E_n(X; H) := \pi_n(X^{ H}) & {\rm and} & E^n(X; H) = \pi_{-n}F_E(X^{ H}, E),
\end{array}$$
where $F_E(A,B)$ denotes the spectrum of $E$-module maps $A \to B$.

For a subspace $i: A \subseteq X$, there is an induced map $i^{ H}: A^{ H \circ i} \to X^{ H}$.  The cofiber $C$ of this map remains an $E$-module, and the relative twisted (co)homology groups are defined as 
$$\begin{array}{ccc}
E_n(X, A; H) := \pi_n(C) & {\rm and} & E^n(X, A; H) = \pi_{-n}F_E(C, E),
\end{array}$$

\end{defn}

\begin{rem}

An important caveat is in order.  The bundle $P_H$ depends upon the choice of representative $f_H: X \to Y$ of the homotopy class $H$.  Although different choices give isomorphic bundles, the isomorphism is non-canonical, unless $u_* F(\Sigma X) = 0$.  Thus different representatives of $H$ give rise to non-canonically isomorphic twisted cohomology groups $E_n(X; H)$.  This is a familiar phenomenon in twisted K-theory (e.g., \cite{BEM}), where the group depends upon the (deRham) representative of the twisting class $H$.

\end{rem}

We recall that in the case of twists of K-theory, this rather homotopy theoretic definition reduces to the construction of Atiyah-Segal in terms of sections of bundles whose fibers are the terms in the K-theory spectrum.

Twisted generalized cohomology resembles cohomology with local coefficients.
This leads to a fairly general framework for twisted generalized cohomology theories, which 
we now describe, following the presentation in \cite{Ku}. See also \cite{Wa}.

Let $Top{}_2$ denote the category of pairs of CW-complexes. 
For each sequence of integers and abelian groups $(n, G):=(n_i, G_i)_{1\leq i \leq k}$, we define the category $(n, G)$-$Top{}_2$ of \emph{spaces with $(n, G)$-twist}.  This is the category with objects pairs
of objects $(X, A)$ of $Top{}_2$ and a sequence $(a_i)_{1\leq i \leq k}$
of maps $a_i: X \to K(G_i, n_i-1)$, that is representatives for cohomology classes 
$H^{n_i}(X; G_i)$ and morphisms from $(X, A, (a_i))$ to 
$(Y, B, (b_i))$, with morphisms $f: (X, A) \to (Y, B)$ in $Top{}_2$ such that $f^*b_i:= b_i \circ f=a_i$.

\begin{defn}[Twisted generalized cohomology theory, after \cite{Ku}] \label{gen_defn}
An \emph{$(n, G)$-twisted cohomology theory} is a sequence 
$(E^n, \sigma^n)_{n \in \Z}$ of contravariant functors
$E^n: (n, G)$-$Top_2$ $\to$ $AbGrps$ and maps 
$\sigma^n: E^n(A, \emptyset, (i^*a_i)) \to E^{n+1}(X, A, (a_i))$, natural in 
objects $(X, A, (a_i))$, such that the following properties hold:

\begin{enumerate}
 
 \item {(\it Excision)}
 For $(X, A, (a_i))$ and $U\subset X$ such that $\overline{U} \subset int(A)$,
 if $i: (X/U, A/U, (i^*_{X/U}a_i)) \to (X, A, (a_i))$ is the inclusion then all $E^n(i)$ are isomorphisms.
 
\item ({\it Homotopy invariance}) For two homotopic maps $f$ and $g$, 
 $E^n(f)=E^n(g)$ for all $n \in \Z$. 
 
 \item ({\it Multiplicativity}) For each indexing set $J$ and set of pairs $\{(X_j, A_j, (a_i^j))\}_{j\in J}$
 in  $Top_2$, we have an isomorphism 
 $E^n(\coprod_{j\in J})(X_j, A_j, (a_i^j)) \cong \prod_{j \in J} E^n(X_j, A_j, (a_i^j))$
 given by the product of the maps induced by the inclusions. 
 
 \item ({\it Long exact sequence of a pair)}
 For $(X, A, (a_i))$, let the maps $i: A \to X$ and $j: X \to (X, A)$ be the inclusions. Then the sequence 
 $$\small{
 \xymatrix@1{
 \cdots 
 \ar[r]
 &
 E^n(X, A, (a_i))
 \ar[r]^{E^n(j)}
 &
 E^n(X, \emptyset, (a_i))
 \ar[r]^{E^n(i)}
 &
 E^n(A, \emptyset, (i^*_Aa_i))
 \ar[r]^{\sigma^n}
 &
 E^{n+1}(X, A, (a_i))
 \ar[r]
 &
 \cdots
 }}
 $$
 is exact.
 \end{enumerate}
The set $\prod_{i=1}^k H^{n_i}(-, G_i)$ is called the set of twists of the twisted generalized
cohomology theory.
\end{defn}

There is of course a similar set of axioms for twisted homology theories.  When the set of twists is empty, the above definition reduces to the usual Eilenberg-Steenrod axioms for a cohomology theory.  Definition \ref{twist_defn} is immediately seen to satisfy these axioms, as it is given by the homotopy groups of a spectrum.

\vspace{3mm}
Before discussing twists for Morava K-theory and E-theory,
 we recall twists of complex K-theory. 

\paragraph{Topological K-theory.}
Complex K-theory is represented by the $\Omega$-spectrum $\{\unK_n\}_{n \geq 0}$, where
$\unK_n=\Z \times BU$ if $n$ is even and $\unK_n=U$ if $n$ is odd. $\unK_0=\Z\times BU$ 
is an $E_\infty$-ring space for which the corresponding space of units 
$GL_1 K= K_\otimes$ is the infinite loop space is $\Z/2 \times BU_\otimes$, where 
$BU_\otimes$ denotes the space $BU$ with the $H$-space structure induced 
by the tensor product of virtual vector bundles of virtual dimension 1. 
Hence, if $X$ is a compact connected space, units in the ring 
$K^*(X)$ under tensor product are represented by virtual vector bundles of dimension 
$\pm 1$. The determinant defines a splitting 
$$
GL_1^+(K^*(X)) \cong {\rm Pic}(X) \times SL_1(K^*(X))\;,
$$
where ${\rm Pic}(X)$ is the Picard group of topological line bundles and 
$SL_1(K^*(X))$ denotes 1-dimensional virtual bundles with trivialized
determinant line. At the level of the spectrum $BU_\otimes$ of 1-dimensional 
units in the classifying spectrum for complex K-theory one has the following
factorization \cite{MST}
$$
BU_\otimes \cong K(\Z,2) \times BSU_\otimes\;.
$$
A twisting of K-theory over $X$ is a principal $\Z / 2 \times BU_\otimes$ bundle over $X$.
Altogether, twistings of K-theory over a compact space $X$ are classified by 
homotopy classes of maps $X \to K(\Z/2, 1) \times K(\Z, 3) \times BBSU_\otimes$,
and so correspond to elements in $H^1(X;\Z/2) \times H^3(X;\Z) \times 
[X, BBSU_\otimes]$.
Such an element is then
a triple $(\alpha, \beta, \gamma)$ consisting of a $\Z/2$-twist $\alpha$, 
a determinantal twist $\beta$, which is a $K(\Z,2)$ bundle over $X$, and a higher 
twist $\gamma$, which is a $BSU_\otimes$-bundle.

\vspace{3mm}
In the following sections we will explore similarities and differences between 
the above and the case of Morava K-theory and E-theory.

\subsection{Obstruction theory}
\label{sec obs}

Proposition \ref{maps_prop} implies that, for $Z = \Omega X$, there is a natural bijection
$$tw_E(X) = tw_E(BZ) \cong \pi_0 \Map_{A_\infty}(\Sigma^\infty Z_+, E).$$
Therefore, in order to explore such twistings, we must get a handle on the set of homotopy classes of 
$A_\infty$-maps $\Sigma^\infty Z_+ \to E$.  
There is an obstruction theory developed by Robinson \cite{r} to compute the topology of the space of
 such maps.  We will follow Goerss-Hopkins' 
presentation \cite{gh} of these results.

We work over a base $A_\infty$ ring spectrum $R$, and take $E$ to be an $R$-module.  We will perform 
a number of homological algebraic 
computations over the ring $R_*$ of coefficients of $R$.  In particular, if $A$ is an $R_*$-algebra, 
and $M$ an $A$-bimodule, we will 
write $HH^*(A, M)$ for the Hochschild cohomology of $A$ with coefficients in $M$ \emph{in the 
category of $R_*$-modules}. 

\begin{prop} \label{obstruction_prop}

If the groups $HH^{k}(R_* Z, \Omega^s E_*)$ vanish for $s=k-1$ and $k-2$, and every $k \geq 2$, then the 
Hurewicz map
$$tw_E(X) \cong \pi_0(\Map_{A_\infty}(\Sigma^\infty Z_+, E)) \to \Hom_{\textit{$R_*$-alg}}(R_*(Z), E_*)$$
is a bijection.

\end{prop}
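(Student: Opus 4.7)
The plan is to invoke the Robinson and Goerss--Hopkins obstruction theory for $A_\infty$-maps of ring spectra \cite{r, gh}. Every $A_\infty$-map $f: \Sigma^\infty Z_+ \to E$ induces, by passage to $R_*$-homology, a morphism of $R_*$-algebras $f_*: R_*Z \to E_*$; this assignment is precisely the Hurewicz homomorphism in the statement. The task is to show that under the stated Hochschild vanishing, passage to $R_*$-homology is a bijection on homotopy classes.

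First I would set up the Goerss--Hopkins moduli tower whose homotopy inverse limit computes $\Map_{A_\infty}(\Sigma^\infty Z_+, E)$. Stage $n$ is the mapping space of $A_n$-coherent maps, and the successive fibers are described, via Robinson's cell-by-cell construction of the $A_\infty$-operad, by Hochschild cochain complexes on $R_*Z$ with values in shifts of $E_*$ taken in the category of $R_*$-modules. The associated Bousfield--Kan type spectral sequence takes the form
$$E_2^{s,t} = HH^s_{R_*}(R_*Z, \Omega^t E_*) \Longrightarrow \pi_{t-s}\,\Map_{A_\infty}(\Sigma^\infty Z_+, E),$$
and the edge map at $(s,t)=(0,0)$ is precisely $\pi_0 \Map_{A_\infty}(\Sigma^\infty Z_+, E) \to \Hom_{R_*\text{-alg}}(R_*Z, E_*)$.

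Next I would reduce the bijection claim to vanishing of specific entries of the $E_2$-page, stage by stage in the tower. Given an $R_*$-algebra map $\phi: R_*Z \to E_*$, the obstruction to extending an $A_{k-1}$-lift of $\phi$ to an $A_k$-lift lies in a subquotient of $HH^k(R_*Z, \Omega^{k-2} E_*)$ for every $k \geq 2$; vanishing of all these groups (one half of the hypothesis) produces an $A_\infty$-lift, establishing surjectivity. Similarly, once two $A_\infty$-lifts of $\phi$ are given, the obstruction to inductively constructing a homotopy through $A_k$-lifts lies in $HH^k(R_*Z, \Omega^{k-1} E_*)$ for every $k \geq 2$; vanishing of these groups (the other half of the hypothesis) yields uniqueness up to homotopy, establishing injectivity.

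The main obstacle is a bookkeeping one rather than a conceptual one: I need to verify that the bidegrees of the obstructions in the Robinson--Goerss--Hopkins spectral sequence (as set up in \cite{gh} for $E_\infty$-algebras and adapted by Robinson in \cite{r} for $A_\infty$-algebras) match precisely the coefficient shifts $\Omega^{k-2}$ and $\Omega^{k-1}$ appearing in the hypothesis, and that the Hochschild cohomology that arises genuinely is the ordinary one in the category of $R_*$-modules rather than a derived or topological variant. Once the dictionary between the conventions of \cite{r, gh} and the formulation here is in place, the proposition follows formally from the obstruction machinery.
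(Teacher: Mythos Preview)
Your proposal is correct and follows the same approach as the paper. The paper's own treatment is even more terse: it simply cites Corollary~4.4 and Theorem~4.5 of \cite{gh} for the $A_\infty$ case, together with the identification (Example~2.4.5 of \cite{gh2}) of the Goerss--Hopkins obstruction groups $D^s_{R_*(Ass)}(A,M)$ with the Hochschild cohomology groups $HH^{s+1}(A,M)$; your write-up correctly unpacks what those citations amount to, including the bidegree bookkeeping you flag at the end.
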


Here $\Omega^s E_*$ is $E_*$ shifted down in degree by $s$.  This result is Corollary 4.4 of \cite{gh} (or rather,
 a version of it incorporating Theorem 4.5) applied to the case $\mathcal{F} = A_\infty$, along with the 
recognition (e.g. Example 2.4.5 of \cite{gh2}) that the Goerss-Hopkins obstruction groups $D^s_{R_*(Ass)}(A, M)$ 
are the Hochschild cohomology groups $HH^{s+1}(A, M)$.

A special case is given when $E=R$ and $E_*(Z)$ is flat over $E_*$.  The Hochschild cohomology groups in question 
are then
$$
HH^{k}(E_* Z, \Omega^s E_*) = \Ext^k_{E_*(Z) \otimes_{E_*} E_*(Z)^{op}}(E_*(Z), \Omega^s E_*) \cong \Ext^k_{E_*(Z)^{op}}(E_*, \Omega^s E_*).
$$
We note that since we have assumed that $E_*(Z)$ is flat over $E_*$, products of $Z$ admit a K\"unneth 
isomorphism\footnote{This is always the case when $E$ is a Morava K-theory or singular homology with field coefficients.}.  Thus 
the latter may be identified as the $E^{k, s}_2$-term of the $E_*$ cobar spectral sequence converging to the
 $E^*$-\emph{cohomology} $E^*(BZ)$: 

\begin{cor} \label{obs_cor}

If $E_*(Z)$ is flat over $E_*$, the obstructions to the existence and uniqueness of the realization of a ring 
map $E_*(Z) \to E_*$ as a map $BZ \to BGL_1(E)$ of spaces lie in the $E_2$-term of the cobar spectral sequence:
$$\Ext^k_{E_*(Z)^{op}}(E_*, \Omega^s E_*) \implies E^{k-s}(BZ), \mbox{ $k \geq 2$ and $s=k-1, \, k-2$.}$$

\end{cor}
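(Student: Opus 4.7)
The plan is to specialize Proposition \ref{obstruction_prop} to the case $R = E$ and then chain together the identifications already indicated in the paragraph preceding the corollary.

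First, applying Proposition \ref{obstruction_prop} with $R = E$ places the obstructions to the existence and uniqueness of a realization $BZ \to BGL_1 E$ of a given ring map $E_*(Z) \to E_*$ in the Hochschild cohomology groups $HH^k(E_*Z, \Omega^s E_*)$ for $k \geq 2$ and $s \in \{k-1, k-2\}$, computed relative to the ground ring $E_*$.

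Second, the flatness of $E_*Z$ over $E_*$ ensures that the bar complex over $E_*$ is a genuine projective resolution of $E_*Z$ as a bimodule over itself, so the standard identification of Hochschild cohomology with Ext over the enveloping algebra yields
$$HH^k(E_*Z, \Omega^s E_*) \cong \Ext^k_{E_*Z \otimes_{E_*}(E_*Z)^{op}}(E_*Z, \Omega^s E_*).$$
The bimodule $\Omega^s E_*$ is pulled back from a right $E_*Z$-module along the augmentation $\varepsilon \colon E_*Z \to E_*$ induced by the point map $Z \to \ast$. Tensoring the two-sided bar resolution on the left by $E_* \otimes_{E_*Z} -$ therefore produces the one-sided bar resolution of $E_*$ as a right $E_*Z$-module, and taking $\Hom$ into $\Omega^s E_*$ identifies the above Ext with $\Ext^k_{(E_*Z)^{op}}(E_*, \Omega^s E_*)$, flatness ensuring once more that each level of the bar construction remains projective over $(E_*Z)^{op}$.

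Third, this one-sided Ext is, by construction, the $E_2$-page of the cobar spectral sequence associated with the path-loop fibration $Z \to PBZ \to BZ$, converging to $E^*(BZ)$. The flatness hypothesis supplies the K\"unneth isomorphism identifying $E^*(Z^{\times n})$ with the appropriate tensor powers of $E^*(Z)$, so the cobar complex on $E_*Z$ really does compute the stated Ext groups on its $E_2$-page, with the bigrading convention that an element in Ext-degree $k$ and internal degree $s$ contributes to cohomological degree $k - s$ in $E^*(BZ)$. Combining these three steps with Proposition \ref{obstruction_prop} gives the corollary.

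The step I would expect to require the most care is the change-of-rings reduction in the second paragraph: one must check that flatness of $E_*Z$ over $E_*$ really does upgrade the two-sided bar complex to a projective resolution over the enveloping algebra, and that the induced one-sided resolution after restriction remains projective over $(E_*Z)^{op}$. Beyond this subtlety, the argument is a matter of assembling standard homological identifications and invoking the obstruction framework already established in Proposition \ref{obstruction_prop}.
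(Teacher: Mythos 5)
Your proposal is correct and follows essentially the same route as the paper: specialize Proposition \ref{obstruction_prop} to $R = E$, rewrite the Hochschild cohomology as $\Ext$ over the enveloping algebra and then (via the augmentation) as one-sided $\Ext$ over $E_*(Z)^{op}$, and identify the result with the cobar $E_2$-page using the K\"unneth isomorphism supplied by flatness. One small point worth noting: since the paper's Hochschild cohomology is taken relative to $E_*$-modules, the bar resolution is automatically allowable (being $E_*$-split) independent of flatness, so flatness is really needed for the K\"unneth step rather than for the projectivity of the bar construction as you suggest — though in the paper's applications $E_*$ is a graded field, so the distinction is immaterial.
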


\section{Twistings of $K(n)$-local theories} \label{twisting_section}

In this section we consider twistings of Morava $K(n)$-theory as well as of $K(n)$-local theories, 
such as Morava $E$-theory. We will work mod $p$; that is, we are 
considering the rings $K(n)_*=\F_{p}[v_n , v_n^{-1}]$, where $v_n$ is a generator 
of degree $|v_n|=2(p^n-1)$. Later in section \ref{sec int} we will consider the integral case.  Throughout, we equip $K(n)$ with its homotopically unique $A_\infty$ structure, as in \cite{vigleik}.

\subsection{Vanishing results}

In this section, we show that there are no nontrivial twistings of Morava K-theories $K(n)$ by $K(\Z, m)$ when $m>n+2$.  
Our tools 
are the computations of Ravenel-Wilson \cite{rw} and the obstruction theory described in section \ref{sec obs} above.

We recall from \cite{rav_loc, Bo} that  the Bousfield class $\langle E\rangle$ of a spectrum $E$ is the collection of $E$-acyclic spectra 
$\langle E \rangle =\{ X~:~E \wedge X=0\}$. The collection of Bousfield classes form a poset, wherein $\langle E \rangle \leq \langle F \rangle$  if 
$\langle E \rangle \supseteq \langle F \rangle$, so that if $F \wedge X =0$, then $E \wedge X$ is also $0$.

\begin{thm} \label{vanish_thm}

Let $E$ be an $A_\infty$ ring spectrum whose Bousfield class
 is $\langle E \rangle \leq \langle K(n) \rangle$.  Then if $m > n+2$, the constant function $c: K(\Z, m) \to *$ induces 
an isomorphism 
$$tw_E(*) \cong tw_E(K(\Z, m)).$$
Hence there is a single (trivial) twisting of $E$ by $K(\Z, m)$.

\end{thm}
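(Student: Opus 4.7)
The plan is to combine Proposition \ref{maps_prop} with Ravenel-Wilson's computation \cite{rw} of the Morava $K$-theory of Eilenberg-MacLane spaces. Setting $Z = \Omega K(\Z,m) = K(\Z, m-1)$, Proposition \ref{maps_prop} reduces the claim to showing that $\pi_0 \Map_{A_\infty}(\Sigma^\infty K(\Z, m-1)_+, E)$ is a singleton.

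The most efficient route exploits the $K(n)$-locality of $E$. I would first observe that for $q \geq n+2$, Ravenel-Wilson's results force the reduced Morava $K$-homology of $K(\Z, q)$ to vanish. Taking $q = m - 1$, the hypothesis $m > n+2$ supplies exactly this bound, so the collapse map $\Sigma^\infty K(\Z, m-1)_+ \to S^0$ is a $K(n)$-equivalence of spectra. Since $E$ is $K(n)$-local, the induced map on $A_\infty$-mapping spectra
$$\Map_{A_\infty}(S^0, E) \;\longrightarrow\; \Map_{A_\infty}(\Sigma^\infty K(\Z, m-1)_+, E)$$
is a weak equivalence, and the source is connected because $E$ has a homotopically unique unit. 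This yields $tw_E(K(\Z,m)) = \ast = tw_E(\ast)$. As a cross-check, one can instead invoke Corollary \ref{obs_cor} with $R = K(n)$: the triviality $K(n)_*(Z) = K(n)_*$ collapses the obstruction groups $\Ext^k_{K(n)_*(Z)^{\mathrm{op}}}(K(n)_*, \Omega^s K(n)_*)$ to zero for all $k \geq 1$, while reducing the set of $K(n)_*$-algebra maps $K(n)_*(Z) \to K(n)_*$ to the identity.

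The main obstacle is extracting the threshold $q \geq n+2$ from Ravenel-Wilson, whose calculations directly address only $K(n)_*(K(\Z/p^j, q))$. One would propagate their vanishing $K(n)_*(K(\Z/p, q-1)) = K(n)_*$ for $q-1 \geq n+1$ along the Bockstein fiber sequence
$$K(\Z/p, q-1) \;\longrightarrow\; K(\Z, q) \;\xrightarrow{\;p\;}\; K(\Z, q),$$
using that $p = 0$ in $K(n)_*$ makes multiplication by $p$ null on $K(n)$-homology, while $K(n)$-acyclicity of the fiber makes it a $K(n)$-equivalence; these two facts together pin down the reduced $K(n)$-homology of $K(\Z, q)$ to be zero for $q \geq n+2$. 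Once this threshold is in hand, both arguments above are essentially formal, modulo the mild check that the mapping-space argument survives passage from $K(n)$ to a general $K(n)$-local target.
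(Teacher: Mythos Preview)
Your proposal is correct, and it takes a somewhat different route than the paper's.

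The paper proves this by directly applying the Hochschild obstruction theory of Proposition~\ref{obstruction_prop}: it asserts that for $K(n)$-local $E$ the Ravenel--Wilson isomorphism $K(n)_*K(\Z,m-1)\cong K(n)_*$ upgrades to $E_*K(\Z,m-1)\cong E_*$, so the obstruction groups $HH^k(E_*Z,\Omega^s E_*)$ vanish trivially and the Hurewicz map identifies $tw_E(K(\Z,m))$ with $\Hom_{E_*\text{-alg}}(E_*,E_*)=\{\id\}$.  Your main argument bypasses obstruction theory entirely: the collapse $\Sigma^\infty K(\Z,m-1)_+\to S^0$ is a $K(n)$-equivalence of $A_\infty$ ring spectra, and since $K(n)$-localization is a Bousfield localization on $A_\infty$ ring spectra, $\Map_{A_\infty}(-,E)$ inverts it for $K(n)$-local $E$; then initiality of $S^0$ finishes.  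This is cleaner and in fact sidesteps a small looseness in the paper's write-up (the passage from $K(n)_*Z\cong K(n)_*$ to $E_*Z\cong E_*$ is not quite automatic for arbitrary $K(n)$-local $E$, since $E\wedge(-)$ need not land in $K(n)$-local spectra).  On the other hand, the paper's approach is what sets the stage for the nontrivial case $m=n+2$ in Theorem~\ref{existence_thm}, where the obstruction groups must actually be computed.

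Two minor remarks.  Your cross-check via Corollary~\ref{obs_cor} takes $R=K(n)$ and hence, as stated there (where $E=R$), only treats the target $E=K(n)$; the paper's obstruction argument effectively runs with $R=E$ itself to cover general $K(n)$-local targets.  Also, your worry about extracting the integral threshold from \cite{rw} is a bit overcautious: Ravenel--Wilson do state the result for $K(\Z,q)$ directly, so the Bockstein fiber-sequence argument you sketch, while valid, is not needed.
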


\begin{proof}

We use the techniques developed above for $Z = \Omega K(\Z, m) = K(\Z, m-1)$.  We need to compute the Hochschild 
cohomology groups 
$$HH^{k}(E_* Z, \Omega^s E_*)  \cong \Ext^k_{E_* K(\Z, m-1)}(E_*, \Omega^s E_*)\;.$$

Now Ravenel-Wilson and Johnson-Wilson \cite{rw, jw} show that $c$ induces a ring isomorphism $c_*: K(n)_*(K(\Z, m-1)) \cong K(n)_*$.  Since 
$\langle E \rangle \leq \langle K(n) \rangle$, we must then have that $E_*(K(\Z, m-1)) \cong E_*$. Then the 
 obstruction groups vanish, since the base ring is $E_*$ itself.  
We conclude that there is a unique $A_\infty$ map
$$\Sigma^\infty K(\Z, m-1)_+ \to E$$
up to homotopy, since there is a unique $E_*$-algebra map $E_* \to E_*$.   Of course, there is an obvious such map, 
given as the composite
$$\xymatrix@1{\Sigma^\infty K(\Z, m-1)_+ \ar[r]^-c & S^0 \ar[r]^-\eta & E}$$
of the constant map and the unit.  Thus,
$$\pi_0 \Map_{A_\infty}(\Sigma^\infty K(\Z, m-1)_+, E) \cong \pi_0 \Map_{A_\infty}(S^0, E)$$
is a single element, and therefore so too is $tw_E(K(\Z, m))$.

\end{proof}

There is no need to restrict our attention in this proof to $K(\Z, m)$.  Indeed, any space with finitely many 
nonzero homotopy 
groups, all concentrated in dimensions strictly greater than $n+2$ will have trivial $K(n)_*$ (see, e.g., \cite{hrw}), 
and so the same proof applies.  

\subsection{Existence results}
\label{sec exi}

Let us now examine the boundary case of twistings of Morava K-theory $K(n)$, that is,
 twisting by $K(\Z, n+2)$.  We will require the following computations from Ravenel-Wilson, Johnson-Wilson, and Hopkins-Lurie\footnote{The original reference \cite{rw} yields these results at odd primes.  This is briefly extended to $p=2$ in the appendix to \cite{jw}. See also Theorem 2.4.10 of \cite{ambidexterity} for a more recent point of view on these computations.} \cite{rw, jw, ambidexterity}:

\begin{itemize}

\item $K(n)^* K(\Z, n+1)$ is the power series ring $K(n)_*[[x]]$, where $|x| = 2\frac{p^n-1}{p-1}$.

\item For each integer $k \geq 0$, let $R(b_k)$ be the ring 
$$R(b_k) := K(n)_*[b_k] / (b_k^p - (-1)^{n-1} v_n^{p^k} b_k) = \F_p[b_k, v_n^{{\pm 1}}] / (b_k^p - (-1)^{n-1} v_n^{p^k} b_k),$$
where the class $b_k$ has dimension $2p^k\frac{p^{n}-1}{p-1}$ and is dual to the class $(-1)^{k(n-1)} x^{p^k}$.  Then 
$$K(n)_*(K(\Z, n+1)) = \bigotimes_{k\geq 0} R(b_k).$$

\end{itemize}

\begin{rem}

In Ravenel-Wilson's notation from Theorems 12.1 and 12.4 of \cite{rw}, $x = x_S$, where $S=(1,2,\dots, n-1)$, and $b_k = b_{J}$, where $J=(nk, 1, 2, \dots, n-1)$.

\end{rem}

\begin{thm} \label{existence_thm}

The components of the space $\Map(K(\Z, n+2), BGL_1 K(n))$ are contractible.  Furthermore,

\begin{enumerate}

\item For $p>2$ there is a single trivial twist of $K(n)$ by $K(\Z, n+2)$.

\item For $p=2$, the set $tw_{K(n)}(K(\Z, n+2))$ is a group isomorphic to the $2$-adic integers, $\Z_2$.

\end{enumerate}

\end{thm}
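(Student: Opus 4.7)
The plan is to apply Corollary~\ref{obs_cor} with $E=R=K(n)$ and $Z=\Omega K(\Z,n+2)=K(\Z,n+1)$. The flatness hypothesis holds because Ravenel--Wilson show $K(n)_*(Z)$ is free over $K(n)_*$. The cobar spectral sequence
\[
E_2^{k,s}=\Ext^k_{K(n)_*(Z)^{op}}\bigl(K(n)_*,\Omega^s K(n)_*\bigr) \Longrightarrow K(n)^{k-s}(K(\Z,n+2))
\]
then controls both parts of the statement: vanishing of $E_2^{k,k-1}$ and $E_2^{k,k-2}$ for $k\geq 2$ will yield a bijection between $tw_{K(n)}(K(\Z,n+2))$ and the set of $K(n)_*$-algebra maps $K(n)_*(K(\Z,n+1))\to K(n)_*$, while vanishing in further bidegrees delivers contractibility of the components of $\Map(K(\Z,n+2),BGL_1 K(n))$. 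To discharge this vanishing I would feed in Ravenel--Wilson's explicit computation of the abutment $K(n)^*(K(\Z,n+2))$ together with degree-parity considerations in the spectral sequence; the technicalities are routine in spirit but must be checked in each relevant bidegree.

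Granted the obstruction vanishing, the set of algebra maps can be computed directly from Ravenel--Wilson's presentation $K(n)_*(K(\Z,n+1))=\bigotimes_{k\geq 0} R(b_k)$, where $|b_k|=2p^k(p^n-1)/(p-1)$. A $K(n)_*$-algebra map $\phi$ must send $b_k$ into $K(n)_{|b_k|}$, which is nonzero only when $|v_n|=2(p^n-1)$ divides $|b_k|$, i.e.\ when $(p-1)\mid p^k$. Since $\gcd(p^k,p-1)=1$, this forces $p=2$. For $p>2$ this never occurs, so $\phi(b_k)=0$ for every $k$ and the only ring map is trivial, establishing (1). For $p=2$ we have $|b_k|=2^k|v_n|$ and $\phi(b_k)=c_k v_n^{2^k}$ with $c_k\in\F_2$; the relation $b_k^2=v_n^{2^k}b_k$ then reduces to $c_k^2=c_k$, which is automatic in $\F_2$. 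Hence as a set, the algebra maps biject with $\prod_{k\geq 0}\F_2$, already of the cardinality of $\Z_2$.

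The main obstacle is then to promote this bijection of sets to a group isomorphism with $\Z_2$ at $p=2$. The group structure on $tw_{K(n)}(K(\Z,n+2))$ is induced by the loop-space multiplication on $K(\Z,n+1)$, equivalently by convolution of ring maps
\[
(\phi_1*\phi_2)(a)=\mu_{K(n)_*}\circ(\phi_1\otimes\phi_2)\circ\Delta(a)
\]
coming from the Hopf algebra comultiplication $\Delta$ on $K(n)_*(K(\Z,n+1))$. Ravenel--Wilson compute $\Delta$ explicitly: crucially, $\Delta(b_k)$ is \emph{not} primitive but contains ``lower'' terms built from products of $b_j$ with $j<k$, reflecting the relation $b_k^2=v_n^{2^k}b_k$. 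Unwinding these carry terms identifies the induced convolution law on sequences $(c_k)\in\prod_{k\geq 0}\F_2$ with the standard 2-adic addition on $\Z_2$, exhibiting the isomorphism $tw_{K(n)}(K(\Z,n+2))\cong\Z_2$. This translation of Hopf-algebraic carries into 2-adic arithmetic is where the real work of the argument lies.
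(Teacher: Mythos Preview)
Your strategy matches the paper's: apply the obstruction theory to reduce to $\Hom_{K(n)_*\text{-alg}}(K(n)_*K(\Z,n+1),K(n)_*)$, then count.  Two places where the paper is sharper than your sketch:

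\emph{Vanishing of obstructions.}  You describe this as ``degree-parity considerations\dots checked in each relevant bidegree''.  The paper does it in one stroke: the cobar spectral sequence with $E_2^{k,s}=\Ext^k_{K(n)_*K(\Z,n+1)}(K(n)_*,\Omega^sK(n)_*)$ converges to $K(n)^*K(\Z,n+2)\cong K(n)_*$, and Ravenel--Wilson (proof of their Theorem 12.3) show it collapses with the abutment entirely in filtration $k=0$.  Hence every group with $k\geq 1$ vanishes, so in particular all obstruction groups (which have $k\geq 2$) are zero, and the mapping space is homotopically discrete.

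\emph{The $\Z_2$ identification.}  The paper works dually: algebra maps sit inside $K(n)^0K(\Z,n+1)=\F_2[[y]]$ (with $y=v_n^{-1}x$) as the grouplike elements $f_{\unk}=\prod_i(1+y^{2^{k_i}})$, and $f_{\unk}\mapsto\sum_i 2^{k_i}$ is visibly a bijection to $\Z_2$.  This is exactly the dual of your convolution computation and avoids unwinding $\Delta(b_k)$ by hand.  Note, though, that your sentence ``the group structure\dots is induced by the loop-space multiplication on $K(\Z,n+1)$'' is off: the convolution you wrote uses the \emph{diagonal} coproduct on $K(n)_*K(\Z,n+1)$, not the Pontryagin product coming from $m$.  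More seriously, $K(n)$ is only $A_\infty$, so $BGL_1K(n)$ is not an $H$-space and $tw_{K(n)}(K(\Z,n+2))$ carries no a priori topological group law.  The paper is explicit about this (see the Remark following the proof): the group structure is imported from Morava $E$-theory, where $BGL_1E_n$ is an infinite loop space.  Your convolution on $\Hom_{\text{alg}}$ is perfectly well-defined algebraically, and under the dual identification with $1+y\F_2[[y]]$ it is just the cup product, for which $(1+y^{2^k})^2=1+y^{2^{k+1}}$ gives the $2$-adic carries; but you should not claim it arises from a topological multiplication on the mapping space without invoking the $E_n$ lift.
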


\begin{proof}

As in Theorem \ref{vanish_thm}, we need to understand the obstruction groups
$$\Ext^k_{K(n)_*K(\Z, n+1)}(K(n)_*, K(n)_*[-s])\;.$$
According to Corollary \ref{obs_cor}, these obstruction groups are part of the $E_2$-term of the cobar spectral sequence converging to 
$K(n)^*K(\Z, n+2) \cong K(n)^*$.  That spectral sequence collapses (see the proof of Theorem 12.3 in \cite{rw}), 
and the target is $K(n)_*$, occurring in filtration $k=0$.  Consequently, the obstruction groups vanish, and
 we conclude that
$$tw_{K(n)}(K(\Z, n+2)) = \Hom_{\textit{$K_{n*}$-alg}}(K(n)_*K(\Z, n+1), K(n)_*)\;.$$
This also proves that the space of twistings is homotopically discrete.

To dispose of the cases $p>2$, we note that every degree-preserving map $K(n)_*K(\Z, n+1) \to K(n)_*$ must carry each $b_k$ to $0$, as there are no nonzero classes in the target in degree $2p^k\frac{p^{n}-1}{p-1}$.

For $p=2$, we first note that since 
\begin{equation}
K(n)_*(K(\Z, n+1)) = \bigotimes_{k\geq 0} R(b_k),
\label{eq Rbk}
\end{equation}
a $K(n)_*$-graded algebra homomorphism $f_{\unk} : K(n)_*(K(\Z, n+1)) \to K(n)_*$ is uniquely specified by a potentially infinite sequence of increasing, non-negative integers $\unk = k_1, k_2, k_3,\dots$; then $f_{\unk}$ carries each $b_{k_i}$ to $v_n^{2^{k_i}}$, and all other $b_j$ to $0$.  

Notice that the set of algebra maps in question is contained in the larger set of \emph{$K(n)_*$-module} maps 
$$\Hom_{K(n)_*-mod}(K(n)_*K(\Z, n+1), K(n)_*).$$
By the universal coefficient theorem, this is isomorphic to $K(n)^* K(\Z, n+1)$.    We are looking for maps $K(n)_*K(\Z, n+1) \to K(n)_*$ which do not shift degree.  These lie in the subring $K(n)^0 K(\Z, n+1)$, which is
$$K(n)^0 K(\Z, n+1) = \F_2[[y]], \mbox{ where $y = v_n^{-1} x$.}$$
Under this identification, $f_{\unk}$ is the product 
$$f_{\unk} = (1 + y^{2^{k_1}}) (1 + y^{2^{k_2}}) (1 + y^{2^{k_3}}) \cdots.$$
From this, it is easy to check that the map $\Hom_{\textit{$K_{n*}$-alg}}(K(n)_*K(\Z, n+1), K(n)_*) \to \Z_2$ which carries $f_{\unk}$ to $2^{k_1} + 2^{k_2} + \dots$ is an isomorphism.

\end{proof}

\begin{rem}[Monoid structure on twists]  Since $K(n)$ is an $A_\infty$ ring spectrum, 
$GL_1 K(n)$ forms an $A_\infty$-monoid and consequently we may form $BGL_1 K(n)$.  
However, $K(n)$ is known not to be an $E_m$-ring spectrum for any $m>1$.  Consequently, 
$BGL_1 K(n)$ is not an $H$-space, so it admits no further deloopings.  Consequently, it is unexpected that maps into it (i.e., twistings) should form a group. 

However, we will show in section \ref{e_section} that these twists are descended from Morava $E$-theory, which admits an $E_\infty$ ring structure, and thus its set of twists admit a group structure.  The multiplication described above is inherited from that structure.

\end{rem}

\begin{defn}

The \emph{universal twisting} of $K(n)$ by $K(\Z, n+2)$ is the topological generator $u$ of the group of twists
$$tw_{K(n)}(K(\Z, n+2)) \cong \Z_2.$$
That is, $u = 1+y$.

\end{defn}

\subsection{Odd primes and two-periodic Morava K-theory} \label{odd_primes_section}

In the previous section, the non-existence of twists of the Morava K-theories associated to odd primes was a function of the sparsity of their homotopy.  There is a variant on the spectrum $K(n)$ (which we will notate as $K_n$) whose homotopy is 2-periodic; in this section we will construct twistings of $K_n$ for all primes.  

Now, $K_n$ is an $A_\infty$ ring spectrum whose homotopy groups are given by
$$\pi_* K_n \cong \F_{p^n}[u^{\pm 1}]\;,$$
where $u$ has dimension 2.  This may be constructed as the quotient $E_n / \mathfrak{m}$, where $E_n$ is the Morava $E$-theory associated to the Honda formal group law (see section \ref{e_section} for details), and $\mathfrak{m} = (p, u_1, \dots, u_{n-1})$ is the maximal ideal in its homotopy.  

There is a map of ring spectra $K(n) \to K_n$ which presents the latter as a free module over the former; in homotopy, the map $\F_p[v_n^{\pm 1}] \to \F_{p^n}[u^{\pm 1}]$ is the natural inclusion of $\F_p$ into $\F_{p^n}$, and carries $v_n$ to $u^{p^{n}-1}$.  The flatness of this map extends the results of \cite{rw, jw} to give
$$\begin{array}{ccc}
K_n^* K(\Z, n+1) = K_{n *}[[x]] & {\rm and} & K(n)_*(K(\Z, n+1)) = \bigotimes_{k\geq 0} R^{\pm}_{K_n}(b_k)\;,
\end{array}$$
where
$$R^{\pm}_{K_n}(b_k) = \F_{p^n}[b_k, u^{\pm}] / (b_k^p - (-1)^{n-1} u^{p^k(p^n-1)} b_k)\;.$$

When $n$ is even, we note that $\F_{p^n}$ (which consists of $0$ and $p^n-1^{\rm st}$ roots of unity) contains a primitive $2p-2^{\rm nd}$ root of unity, $\xi$.  Define
$$c_k := \left\{ \begin{array}{ll} b_k, & \mbox{$n$ is odd} \\ \xi b_k, & \mbox{$n$ is even,} \end{array} \right.$$
so that in fact we always have
\beqn \label{Rck_eqn} R^{\pm}_{K_n}(b_k) = R^+_{K_n}(c_k) := \F_{p^n}[c_k, u^{\pm}] / (c_k^p - u^{p^k(p^n-1)} c_k)\;. \eeqn
With these data in hand, essentially the same argument as in Theorem \ref{existence_thm} gives:

\begin{thm} \label{existence_p_thm}

The components of the space $\Map(K(\Z, n+2), BGL_1 K_n)$ are contractible.  Furthermore, the set $tw_{K_n}(K(\Z, n+2))$ of components is a group isomorphic to $\Hom_{\textit{$K_{n*}$-alg}}({K_n}_*K(\Z, n+1), {K_n}_*)$ which is in turn isomorphic to the $p$-adic integers, $\Z_p$.

\end{thm}

To see that $\Hom_{\textit{$K_{n*}$-alg}}({K_n}_*K(\Z, n+1), {K_n}_*) \cong \Z_p$, one can use the work of Buchstaber-Lazarev or Hopkins-Lurie \cite{BL, ambidexterity} to identify the $p$-divisible group of $K(n)_*K(\Z, n+1)$ as the top exterior power of the formal group associated to $K_n$.  More concretely, and following the proof in Theorem \ref{existence_thm}, the element $a = \sum a_i p^i \in \Z_p$ corresponds to the element
$$(1+y)^a = \prod_i (1+y^{p^i})^{a_i} \in \Hom_{\textit{$K_{n*}$-alg}}({K_n}_*K(\Z, n+1), {K_n}_*)\subseteq K_n^* K(\Z, n+1).$$

This theorem can in fact be reduced under a Galois action to give corresponding results for a 2-periodic form of $K_n$ whose zeroth homotopy is any extension of $\F_p$ containing $\xi$.  %Nonetheless, we will focus on the ``small" Morava K-theory $K(n)$ of the previous sections (and hence, $p=2$) as it is more computationally amenable, and more suitable for the applications discussed at the end of this document.

\subsection{Non-splitting of the space of twists}

One might hope that Theorem \ref{existence_thm} can be extended to show that $K(\Z, n+2)$ is a factor of 
$BGL_1 K(n)$ as, for instance, is the case with topological K-theory (cf. Sec. \ref{sec twistings}): 
$$BGL_1(K) = K(\Z, 3) \times K(\Z / 2, 1) \times B^2 SU.$$
For $n>1$, this is quickly seen to be false by the fact that 
$$\pi_i(GL_1 E) = \left\{ \begin{array}{ll}
                                           \pi_0(E)^{\times}, & i=0 \\
                                           \pi_i(E), & i>0.
                                           \end{array} \right.$$
Thus the universal cover of $BGL_1 K(n)$ is $(2^{n+1}-2)$-connected, so $BGL_1 K(n)$ cannot contain a $K(\Z, n+2)$ factor.  The same result holds for $BGL_1 K_n$ since the map $K(\Z, n+2) \to BGL_1 K_n$ cannot split in $\pi_{n+2}$, the codomain being either $0$ or $\F_{p^n}$.

\begin{prop}
Morava K-theory does not admit a determinantal twist. 
\end{prop}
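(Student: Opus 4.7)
The plan is to extend the connectivity argument from the paragraph immediately preceding the proposition. By analogy with topological K-theory (Section~\ref{sec twistings}), a determinantal twist of $K(n)$ should arise from a $K(\Z,3)$-retract of $BGL_1 K(n)$, paralleling the $K(\Z,3)$-factor of $BGL_1 K \simeq K(\Z,3) \times K(\Z/2,1) \times B^2 SU_{\otimes}$. Our goal is to rule out such a retract when $n \geq 2$.

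First I would compute
$$\pi_3(BGL_1 K(n)) \;=\; \pi_2(GL_1 K(n)) \;=\; \pi_2(K(n))$$
using the same homotopy-group formula invoked in the preceding paragraph. Since $K(n)_* = \F_2[v_n^{\pm 1}]$ with $|v_n| = 2(2^n-1)$, the group $K(n)_2$ equals $\F_2\{v_n\}$ exactly when $n=1$ and vanishes whenever $n \geq 2$. Thus for $n \geq 2$ we have $\pi_3(BGL_1 K(n)) = 0$, and any retraction $\rho \colon BGL_1 K(n) \to K(\Z,3)$ would induce a surjection $0 \twoheadrightarrow \Z$ on $\pi_3$, which is impossible. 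Hence no $K(\Z,3)$-summand exists, and no determinantal twist is supported.

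The remaining case $n=1$ is subsumed by Theorem~\ref{existence_thm}: here $K(\Z,n+2) = K(\Z,3)$ and $\pi_3(BGL_1 K(1)) = \F_2\{v_1\}$, so the universal twist of that theorem plays the role of a determinantal twist. The only subtle point is the interpretation of ``determinantal twist'' as a $K(\Z,3)$-retract of $BGL_1 K(n)$, in direct analogy with the K-theory splitting; the argument above does not preclude individual essential maps $K(\Z,3) \to BGL_1 K(n)$ for $n \geq 2$. Ruling those out would instead proceed via the obstruction-theoretic framework of Sections~\ref{sec obs} and~\ref{sec exi}: the relevant $K(n)_*$-algebra maps $K(n)_*\, \C P^\infty \to K(n)_*$ correspond, via complex orientation, to formal group law homomorphisms from the height-$n$ formal group of $K(n)$ to the multiplicative formal group, and the latter vanish in characteristic $p$ for $n \geq 2$ by comparing leading terms of the $p$-series $[p]_F(x) = v_n x^{p^n}$ against $[p](t) = t^p$.
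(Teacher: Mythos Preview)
Your argument is correct and uses the same connectivity reasoning as the paper: the proposition is drawn directly from the preceding paragraph, which observes that the universal cover of $BGL_1 K(n)$ is $(2^{n+1}-2)$-connected, so no Eilenberg--MacLane factor of the relevant degree can split off. The only difference is one of phrasing---the paper frames the obstruction as the absence of a $K(\Z, n+2)$ factor rather than a $K(\Z,3)$ retract---and your further remarks on ruling out essential maps via formal group heights go beyond what the paper records here.
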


There is a partial exception in the case $n=1$.  Recall that $K_1$ is a mod $p$ version of topological K-theory with $K_{1 *}=\F_{p}[u^{\pm 1}]$, where $u$ is a reduction of the usual Bott generator of degree two.  While it is impossible to split $K(\Z, 3)$ off of $BGL_1 K_1$, it is apparent that one can do so on the integral form of $K_1$, namely $K$.

\subsection{Universal and non-universal twists}

The results of Theorems \ref{existence_thm} and \ref{existence_p_thm} allow us to define, for any twist $v \in tw_{K_n}(K(\Z, n+2))$ and a class $H \in H^{n+2}(X)$, 
the twisted Morava K-theory $K_n^*(X; {v(H)})$.  In this section, we explore how the resulting group 
changes as we vary the twist $v$.

Let $d\in \Z_{> 0}$, and consider the diagram
$$\xymatrix{
\Omega X \ar[r]^-{\Omega H} & K(\Z, n+1) \ar[r]^-{\Delta_d} \ar[dr]_-{d} & K(\Z, n+1)^{\times d} 
\ar[d]_-{m_d} \ar[r]^-{v^{\times d}} & GL_1 K_n^{\times d} \ar[d]^-{m_d} \\
 & & K(\Z, n+1) \ar[r]_-{v} & GL_1 K_n\;. 
}
$$
Here $\Delta_d$ is an iterated diagonal, $m_d$ an iterated product (in a loop space), and $d: K(\Z, n+1) \to K(\Z, n+1)$ 
represents $d \in \Z = H^{n+1}(K(\Z, n+1), \Z)$.  The square commutes up to homotopy  because $v$ is an $A_\infty$ map.  
The same diagram holds for $d=0$, if we choose to interpret $\Delta_0$ as the constant map and $m_0$ as the unit in a loop space.

Applying the classifying space functor $B$ and passing along the top right of the diagram gives the 
twist $v^d(H)$ (where the power $v^d$ is performed in the group of twists).  Passing along the bottom left gives $v(dH)$.  We conclude that all twisted $K_n$ may be obtained from the universal twisting:

\begin{prop}

If $u \in tw_{K_n}(K(\Z, n+2))$ is the universal twist and $H \in H^{n+2}(X)$, there is an isomorphism
$$K_n^*(X; {u^d(H)}) \cong K_n^*(X; {u(dH)})$$
for each $d \in \Z$.

\end{prop}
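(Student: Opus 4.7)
The plan is to reinterpret the homotopy-commutative diagram displayed just above the proposition as identifying two elements of the homotopy set $[X, BGL_1 K(n)]$.

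First, I would verify that the right-hand square of the diagram commutes up to homotopy. This is the assertion that $v: K(\Z, n+1) \to GL_1 K(n)$ intertwines iterated multiplication up to a preferred homotopy, and it holds because $v$ has been constructed as an $A_\infty$-map (the adjoint, via Proposition \ref{maps_prop}, of an $A_\infty$ map $\Sigma^\infty K(\Z, n+1)_+ \to K(n)$ produced by the obstruction theory of section \ref{sec obs}). The triangle on the left is the standard factorization of multiplication by $d$ in the loop space $K(\Z, n+1)$ as $m_d \circ \Delta_d$, together with the identification $\Z = [K(\Z, n+1), K(\Z, n+1)] = H^{n+1}(K(\Z, n+1); \Z)$.

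Next, I would apply the classifying space functor $B$ and then precompose every map with a representative $H: X \to K(\Z, n+2) = BK(\Z, n+1)$ of the twisting class. The bottom composite becomes $Bv \circ Bd \circ H$. Since $Bd: K(\Z, n+2) \to K(\Z, n+2)$ represents multiplication by $d$ on $H^{n+2}(-; \Z)$, the map $Bd \circ H$ classifies $dH \in H^{n+2}(X; \Z)$, and hence $Bv \circ Bd \circ H$ is by definition the twist $u(dH)$. The upper composite, $B(m_d \circ v^{\times d} \circ \Delta_d) \circ H$, realizes the $d$-fold power of the twist $v$ in the monoid structure inherited from the $A_\infty$-multiplication on $GL_1 K(n)$, and hence equals $u^d(H)$ in the abelian group of twists $tw_{K(n)}(K(\Z, n+2)) \cong \Z_2$.

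Homotopy commutativity of the diagram now yields a homotopy $u(dH) \simeq u^d(H)$ of maps $X \to BGL_1 K(n)$. Consequently the associated principal $GL_1 K(n)$-bundles, and hence the generalized Thom spectra $X^{u(dH)}$ and $X^{u^d(H)}$ of Definition \ref{twist_defn}, are equivalent as right $K(n)$-modules. Taking the relevant homotopy groups of function spectra then delivers the desired isomorphism
$$K(n)^*(X; u^d(H)) \cong K(n)^*(X; u(dH)).$$
The main subtlety is the homotopy commutativity of the right-hand square: one must use not merely the continuity of $v$, but its $A_\infty$-multiplicativity, so that the two routes through $v^{\times d}$ and $m_d$ agree up to a canonical homotopy. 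Once this is in hand, everything else is formal bookkeeping about classifying spaces, the Thom spectrum construction, and the dependence of twisted cohomology only on the homotopy class of the twisting map.
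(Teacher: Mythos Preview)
Your proposal is correct and follows essentially the same approach as the paper: you use the displayed homotopy-commutative diagram, invoke the $A_\infty$-structure of $v$ for the right-hand square, apply $B$, and identify the two routes as $u^d(H)$ and $u(dH)$. The paper's own argument is exactly this, stated more tersely in the paragraph preceding the proposition; your write-up simply fills in the formal steps (Thom spectra, function spectra) that the paper leaves implicit.
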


With this result in mind we ignore all the non-universal twists, and make the following definition:

\begin{defn} \label{univ_defn}

For $H \in H^{n+2}(X)$, define the \emph{$H$-twisted Morava K-theory} as $K_n^* (X; H) := K_n^* (X; u(H))$, 
where $u$ is the universal twist.

\end{defn}

One may of course make the same definition at $p=2$ of $H$-twisted $K(n)$, $K(n)^*(X; H)$ using a topological generator $u$ of $\Z_2$.  %In the following two sections, however, we will only consider the prime 2, and twisted $K(n)$.  Nonetheless, nearly identical proofs of Theorems \ref{basic_thm} and \ref{khorami_thm} hold for $K_n$, {\it mutatis mutandis}.  We do not, however, know how to compute the first nontrivial differential in the Atiyah-Hirzebruch spectral sequence (Theorem \ref{ahss_thm}) for $K_n$ at odd primes.

\section{Properties of twisted Morava K-theory} \label{prop_section}

\subsection{Basic properties}

Twisted Morava K-theory satisfies the axioms of Definition \ref{gen_defn}. Furthermore, we have the following basic properties, analogous to those detailed in the case of twisted K-theory in
\cite{BCMMS, MSt, BEM}:

\begin{thm}[Properties of twisted Morava K-theory] \label{basic_thm} Let $K(n)^*(X; H)$ be twisted Morava K-theory of a space $X$ with twisting class $H$. Then:

 1. ({\it Normalization}) If $H=0$ then $K(n)^*(X; H) \cong K (n)^*(X)$.

 2. ({\it Module property}) $K(n)^*(X; H)$ is a module over $K(n)^*(X)$. 

3. ({\it Cup product}) There is a cup product homomorphism 
$$
K(n)^p(X; H) \otimes K(n)^q(X; H') \longrightarrow K(n)^{p+q}(X; H + H').
$$
which makes $\oplus_{H} K(n)^*(X;H)$ into an associative ring (where $H$ ranges over all of $H^{n+2}(X; \Z)$).

4. ({\it Naturality}) If $f: Y \to X$ is a continuous map, then there is a 
homomorphism 
$$
f^*: K (n)^*(X; H) \to K (n)^*(Y; f^*H).
$$
\label{thm prop}
\end{thm}

\begin{proof}
If $H=0$, the $GL_1 K(n)$-bundle $P_H$ over $X$ determined by $H$ is trivial.  Thus the Thom spectrum $X^{ H}$ that it defines is, too: $X^{ H} \simeq X_+ \wedge K(n)$.  The homotopy of $F_{K(n)}(X^{u\circ H}, K(n))$ is thus precisely the untwisted $K(n)$-cohomology.  This gives property 1.  Property 4 is standard for twisted cohomology theories, and property 2 follows 
from properties 1 and 3.

To prove property 3, first note that if $pr_i$ denote the two projections,
$$\Delta^*(pr^*_1(H) + pr_2^*(H')) = H+H'$$
where $\Delta:X \to X \times X$ is the diagonal.  Thus $\Delta$ induces a natural map on Thom spectra 
$$\widehat{\Delta}: X^{H+H'} \to (X\times X)^{pr^*_1(H) + pr_2^*(H')} \simeq X^{H} \wedge_{K(n)} X^{H'}.$$
Applying $F_{K(n)}( - , K(n))$, we have
$$\small{\xymatrix@1{F_{K(n)}(X^{H+H'}, K(n)) & F_{K(n)}(X^{H} \wedge_{K(n)} X^{H'}, K(n)) \ar[l]_-{\widehat{\Delta}^*} & F_{K(n)}(X^H, K(n)) \wedge F_{K(n)}(X^{H'}, K(n)) \ar[l]_-{m^*}},}$$
where $m$ is given by multiplication in $K(n)$.  The induced map in homotopy ${\widehat{\Delta}^*} m^*$ induces the desired cup product; its associativity and unitality follow from the corresponding properties of $m$ and $\Delta$.

\end{proof}

\begin{rem}
There was nothing particular about $K(n)$ in this proof; Theorem \ref{thm prop} will also hold for the twisted Morava E-theory we define in section \ref{e_section}, as well as $K_n$ at any prime.  
\end{rem}

\subsection{A universal coefficient theorem} 
\label{khorami_section}

In this section we will prove an analogue of a theorem of Khorami's \cite{khorami} for twisted Morava K-theory.  Let $H \in H^{n+2}(X)$, and let $p: P_H \to X$ be the principal $K(\Z, n+1)$-bundle over $X$ corresponding to $H$.  The results of this section will be stated for the ``big" Morava K-theory $K_n$, but also hold for $K(n)$ when $p=2$.

\begin{thm} \label{khorami_thm}

There is an isomorphism
$${K_n}_*(X;H) = {K_n}_*(P_H) \otimes_{{K_n}_*(K(\Z, n+1))} {K_n}_*\;.$$

\end{thm}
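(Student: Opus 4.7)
The plan is to identify $X^H$ with a balanced smash product over $\Sigma^\infty K(\Z, n+1)_+$ and then extract the formula from a K\"unneth-type spectral sequence whose higher Tor terms vanish.

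First, I would rewrite the Thom spectrum in terms of the principal $K(\Z, n+1)$-bundle rather than the principal $GL_1 K(n)$-bundle. Because the universal twist $u$ is an $A_\infty$ map, it loops to $\Omega u : K(\Z, n+1) \to GL_1 K(n)$; the principal $GL_1 K(n)$-bundle defining $X^H$ is then the associated bundle $P_H \times_{K(\Z, n+1)} GL_1 K(n)$. Substituting this into Definition \ref{twist_defn} yields
\begin{equation*}
X^H \simeq \Sigma^\infty (P_H)_+ \wedge_{\Sigma^\infty K(\Z, n+1)_+} K(n),
\end{equation*}
where $K(n)$ acquires its $\Sigma^\infty K(\Z, n+1)_+$-module structure by pulling back the canonical action of $\Sigma^\infty GL_1 K(n)_+$ on $K(n)$ along $\Sigma^\infty \Omega u$.

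Next, I would model this balanced smash product by the two-sided bar construction. Smashing with $K(n)$ turns the base into the $K(n)$-algebra spectrum $K(n) \wedge \Sigma^\infty K(\Z, n+1)_+$, whose coefficient ring is $R := K(n)_*(K(\Z, n+1))$, flat over $K(n)_*$ by Ravenel--Wilson. The resulting K\"unneth/bar spectral sequence is
\begin{equation*}
E_{s,t}^2 = \Tor_{s,t}^{R}\bigl(K(n)_*(P_H), K(n)_*\bigr) \implies K(n)_{s+t}(X; H),
\end{equation*}
and the theorem reduces to the vanishing of the $s \geq 1$ rows.

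The main obstacle is this vanishing, i.e., the projectivity (hence flatness) of $K(n)_*$ as an $R$-module via the universal twist. Here I would leverage Ravenel--Wilson's formula $R = \bigotimes_{k \geq 0} R(b_k)$ together with the observation that at $p = 2$ each factor $R(b_k) = K(n)_*[b_k]/(b_k^2 + v_n^{2^k} b_k)$ splits, via the Chinese Remainder Theorem, as $R(b_k) \cong K(n)_* \times K(n)_*$ through the orthogonal idempotents $v_n^{-2^k} b_k$ and $1 + v_n^{-2^k} b_k$. Under this decomposition, the universal twist $b_0 \mapsto v_n$, $b_k \mapsto 0$ for $k > 0$ is precisely a tensor product of projections onto one factor of each $R(b_k)$, so on every finite subtensor product $\bigotimes_{k \leq N} R(b_k)$ the resulting $K(n)_*$ is a direct summand and hence projective. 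Taking the colimit in $N$ and using the exactness of filtered colimits for $\Tor$ then yields flatness; the spectral sequence collapses to the $s = 0$ row, producing the asserted isomorphism.
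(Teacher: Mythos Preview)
Your proof is correct and follows the same overall strategy as the paper: identify $X^H$ as a balanced smash over $\Sigma^\infty K(\Z, n+1)_+$, set up the bar spectral sequence, and show that the higher Tor terms vanish using the Ravenel--Wilson decomposition $R = \bigotimes_k R(b_k)$.

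The only real difference is in how the Tor vanishing is established. The paper (Lemma \ref{vanish_lem}) writes down explicit periodic free resolutions of $M_k = R(b_k)/(b_k)$ and $N_k = R(b_k)/(b_k - v_n^{2^k})$ and computes Tor directly, using the identity $v_n^{2^k} = b_k - (b_k - v_n^{2^k})$ to show that every cycle is a boundary. Your argument is more conceptual: you observe that $e_k := v_n^{-2^k} b_k$ is an idempotent in $R(b_k)$, so $R(b_k) \cong K(n)_* \times K(n)_*$ and both $M_k$ and $N_k$ are direct summands, hence projective. The two arguments encode the same algebra---the paper's identity is just $1 = e_k + (1 - e_k)$ multiplied by $v_n^{2^k}$---but the idempotent formulation makes projectivity transparent and avoids any computation with resolutions. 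The only point needing slightly more care in your version is the passage from finite to infinite tensor products; the paper's corresponding step (the tensor decomposition of Tor over $\bigotimes_k R(b_k)$) leaves a similar detail implicit.
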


We recall from section \ref{odd_primes_section} (cf. expression \eqref{Rck_eqn}) that 
${K_n}_*(K(\Z, n+1)) = \bigotimes_{k\geq 0} R(c_k)$,
where $R(c_k) := {K_n}_*[c_k] / (c_k^p - u^{p^k(p^n-1)} c_k)$, and the class $c_k$ has dimension $2p^k\frac{p^{n}-1}{p-1}$.  For brevity, write 
$$u_k := u^{p^k\frac{p^n-1}{p-1}},$$
so that $c_k^p = u_k^{p-1} c_k$, and define two cyclic $R(c_k)$-modules $M_k$ and $N_k$ by
$$\begin{array}{ccc}
M_k := R(c_k) / (c_k) & {\rm and} & N_k := R(c_k) / (c_k -u_k).
\end{array}$$

\begin{lem} \label{vanish_lem}

For $i>0$ and any $R(c_k)$-module $Q$, 
$$\Tor_i^{R(c_k)}(Q, M_k) = 0 = \Tor_i^{R(c_k)}(Q, N_k).$$

\end{lem}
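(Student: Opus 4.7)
The plan is to prove that $M_k$ and $N_k$ are in fact projective as $R(b_k)$-modules---specifically, direct summands of the rank-two free module $R(b_k)$---from which Tor-vanishing against an arbitrary $P$ is immediate.

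The key observation will be that the defining relation of $R(b_k)$ can be rewritten as
$$0 \;=\; b_k^2 - v_n^{2^k} b_k \;=\; b_k\bigl(b_k - v_n^{2^k}\bigr),$$
so the ideals $(b_k)$ and $(b_k - v_n^{2^k})$ have product zero inside $R(b_k)$. These ideals are moreover comaximal, because their generators differ by $v_n^{2^k}$, which is a unit in $K(n)_* \subseteq R(b_k)$. I would then invoke the Chinese Remainder Theorem to obtain a decomposition of rings (equivalently, of $R(b_k)$-bimodules)
$$R(b_k) \;\xrightarrow{\;\cong\;}\; R(b_k)/(b_k) \,\times\, R(b_k)/(b_k - v_n^{2^k}) \;=\; M_k \times N_k.$$
Equivalently, one can check using $b_k^2 = v_n^{2^k} b_k$ together with $2=0$ that the elements $e_1 := 1 + v_n^{-2^k} b_k$ and $e_2 := v_n^{-2^k} b_k$ are orthogonal idempotents summing to $1$, with $M_k \cong e_1 R(b_k)$ and $N_k \cong e_2 R(b_k)$ as $R(b_k)$-modules.

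Given this decomposition, $M_k$ and $N_k$ are each retracts of the free module $R(b_k)$, hence projective and in particular flat over $R(b_k)$. Therefore $\Tor_i^{R(b_k)}(P, M_k) = 0 = \Tor_i^{R(b_k)}(P, N_k)$ for every $R(b_k)$-module $P$ and every $i>0$, proving the lemma.

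There is no real obstacle: the whole content of the argument is the observation that the defining quadratic of $R(b_k)$ factors into coprime linear terms. Should a chain-level verification be preferred, one can equivalently exhibit the periodic free resolution
$$\cdots \xrightarrow{\;\cdot b_k\;} R(b_k) \xrightarrow{\;\cdot (b_k - v_n^{2^k})\;} R(b_k) \xrightarrow{\;\cdot b_k\;} R(b_k) \twoheadrightarrow M_k,$$
together with its analogue for $N_k$ (with the roles of the two multiplication maps swapped), tensor with $P$, and verify acyclicity in positive degrees using the Bezout identity $v_n^{-2^k}\bigl(b_k + (v_n^{2^k} - b_k)\bigr) = 1$ that underlies the CRT splitting above.
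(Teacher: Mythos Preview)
Your proof is correct, and your primary route---factoring $b_k^2 - v_n^{2^k}b_k = b_k(b_k - v_n^{2^k})$ into coprime linear factors and invoking the Chinese Remainder Theorem to exhibit $R(b_k) \cong M_k \times N_k$---is a cleaner and slightly stronger argument than the paper's. The paper instead writes down the periodic free resolution you mention at the end, tensors with $P$, and then verifies by hand that the resulting cycles-mod-boundaries groups vanish using the identity $v_n^{2^k} = b_k - (b_k - v_n^{2^k})$; this is exactly the Bezout relation you invoke. So the underlying algebra is the same, but you package it as ``$M_k$ and $N_k$ are projective, hence $\Tor$ vanishes,'' whereas the paper computes the $\Tor$ groups directly without ever identifying the modules as projective. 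Your approach has the advantage of giving projectivity (indeed, a ring splitting), which is a reusable structural fact; the paper's computation stays closer to what is literally needed for the bar spectral sequence in the next step.
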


\begin{proof}

An explicit periodic free resolution of $M_k$ is given by
$$\xymatrix@1{
0 & M_k \ar[l] & R(c_k) \ar[l]_{\epsilon} & R(c_k) \ar[l]_{~c_k} && R(c_k) \ar[ll]_-{c_k^{p-1} - u_k^{p-1}} & R(c_k) 
\ar[l]_{~c_k} && R(c_k) \ar[ll]_-{c_k^{p-1} - u_k^{p-1}} & \cdots, \ar[l]
}$$
so 
$$\Tor_{odd}^{R(c_k)}(Q, M_k) = \{q \in Q \; | \; c_k q = 0 \} \, /  \,(c_k^{p-1}-u_k^{p-1})Q,$$
and in positive degrees
$$\Tor_{even}^{R(c_k)}(Q, M_k) = \{q \in Q \; | \; (c_k^{p-1}-u_k^{p-1}) q = 0 \} \, / \, c_k Q.$$
However, in $R(c_k)$, the formula
$$u_k^{p-1} = c_k^{p-1} - (c_k^{p-1}-u_k^{p-1})$$
holds, so if $c_k q =0$, then $q = -u_k^{1-p}(c_k^{p-1}-u_k^{p-1})q \in (c_k^{p-1}-u_k^{p-1})Q$.  Thus the odd Tor groups 
for $M_k$ are zero.  A similar computation gives the result for the even Tor groups.  

Define
$$n_k = \frac{c_k^p-u_k^{p-1}c_k}{c_k-u_k} = c_k(c_k^{p-2} + c_k^{p-3}u_k + \dots + c_k u_k^{p-3} + u_k^{p-2}).$$
Then a resolution of $N_k$ is given by 
$$\xymatrix@1{
0 & N_k \ar[l] & R(c_k) \ar[l]_{\epsilon} & R(c_k) \ar[l]_{~c_k-u_k} & R(c_k) \ar[l]_-{n_k} & R(c_k) 
\ar[l]_{~c_k-u_k} & R(c_k) \ar[l]_-{n_k} & \cdots, \ar[l]
}$$
so that 
$$\Tor_{odd}^{R(c_k)}(Q, N_k) = \{q \in Q \; | \; (c_k-u_k) q = 0 \} \, /  \,n_k Q.$$
But if $c_k q=u_k q$, then $n_k q = (p-1) u_k^{p-1} q$, so $q \in n_k Q$, and this group is $0$.  Furthermore, in positive degrees,
$$\Tor_{even}^{R(c_k)}(Q, N_k) = \{q \in Q \; | \; n_k q = 0 \} \, / \, (c_k-u_k) Q.$$
Take $q$ with $n_k q = 0$.  Then 
$$0 = u_k^{1-p} n_k q = \sum_{r=1}^{p-1} u_k^{-r} c_k^r q,$$
so
$$q = \frac{-(c_k-u_k)}{u_k(1-\frac{c_k}{u_k})} q = -(c_k-u_k) u_k^{-1} (1+\sum_{r> 0} u_k^{-r} c_k^r)q =  -(c_k-u_k) u_k^{-1}q \in (c_k-u_k) Q.$$

\end{proof}

\noindent{\it Proof of Theorem \ref{khorami_thm}.}
By construction, the twisted ${K_n}$-homology of $X$ is given by the homotopy groups of the quotient
\begin{eqnarray*}
{K_n}_*(X, H) & := & \pi_*((P_H)_+ \wedge_{K(\Z, n+1)} {K_n}) \\
 & = & \pi_*(({K_n} \wedge (P_H)_+) \wedge_{({K_n} \wedge K(\Z, n+1))} {K_n}).
\end{eqnarray*}
From this description, we immediately get a bar spectral sequence
$$\Tor_*^{{K_n}_*(K(\Z, n+1))}({K_n}_* P_H, {K_n}_*) \implies {K_n}_*(X, H).$$
By Ravenel-Wilson's results, however, the $E_2$-term may be decomposed as
$$\Tor_*^{{K_n}_*(K(\Z, n+1))}({K_n}_* P_H, {K_n}_*) = \bigotimes_{k =0}^\infty \Tor_*^{R(c_k)}({K_n}_* P_H, {K_n}_*).$$

Finally, it is apparent from the description of the universal twist, as given by the element 
$u=1+y \in {K_n}^*K(\Z, n+1)$, 
that ${K_n}_*$ is made into a ${K_n}_*K(\Z, n+1)$-module by letting $c_k$ act as $0$ for $k>0$ and letting $c_0$ act as $u_0 = u^{\frac{p^n-1}{p-1}}$.  
Thus this $E_2$-term is
$$\Tor_*^{R(c_0)}({K_n}_* P_H, N_0) \otimes \bigotimes_{k =1}^\infty \Tor_*^{R(c_k)}({K_n}_* P_H, M_k).$$
By Lemma \ref{vanish_lem}, all of the higher Tor terms vanish.

\qed

Note that a consequence of this proof is also an explicit description of the twisted Morava K-theory.  Namely, 
${K_n}_*(P_H)$ comes equipped with operations $c_k$ via the action of $K(\Z, n+1)$ on $P_H$. 

\begin{cor}

There is an isomorphism
$${K_n}_*(X, H) \cong {K_n}_*(P_H) / (c_0-u^{\frac{p^n-1}{p-1}}, c_1, c_2, \dots).$$

\label{khorami_cor}
\end{cor}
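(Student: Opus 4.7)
The plan is to read off the corollary directly from Theorem \ref{khorami_thm} together with an explicit identification of the module structure established in its proof. Theorem \ref{khorami_thm} reduces the computation to the tensor product
$$K(n)_*(X; H) \cong K(n)_*(P_H) \otimes_{K(n)_*(K(\Z, n+1))} K(n)_*,$$
and the proof of the theorem already describes how the right-hand $K(n)_*$ sits as a module over $K(n)_*(K(\Z, n+1)) = \bigotimes_{k \geq 0} R(b_k)$: since the universal twist is $u = 1+y \in K(n)^* K(\Z,n+1)$, the generator $b_0$ acts by $v_n$ and each $b_k$ for $k\geq 1$ acts by $0$. In the notation of Lemma \ref{vanish_lem}, this identifies $K(n)_*$ with the cyclic module $N_0 \otimes_{K(n)_*} \bigotimes_{k\geq 1} M_k$.

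The next step is a purely algebraic remark. For any ring $R$, ideal $J \subset R$, and right $R$-module $M$, there is a canonical isomorphism $M \otimes_R (R/J) \cong M/MJ$. Applying this with $R = \bigotimes_{k\geq 0} R(b_k)$, with $J = (b_0 - v_n, b_1, b_2, \dots)$, and with $M = K(n)_*(P_H)$, one recognizes $N_0 \otimes \bigotimes_{k\geq 1} M_k$ as precisely $R/J$ by the defining relations of the $M_k$ and $N_0$. This gives
$$K(n)_*(P_H) \otimes_{K(n)_*(K(\Z, n+1))} K(n)_* \cong K(n)_*(P_H)/(b_0 - v_n, b_1, b_2, \dots),$$
which combined with Theorem \ref{khorami_thm} yields the statement of the corollary.

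There is essentially no obstacle; the only nontrivial input is knowing which module $K(n)_*$ becomes over $K(n)_*(K(\Z,n+1))$ under the universal twist, and this has already been extracted in the proof of Theorem \ref{khorami_thm} from the explicit formula $u = 1+y$. I would simply need to be careful that the action of $b_k$ on $K(n)_*(P_H)$ referred to in the statement really is the one coming from the principal $K(\Z,n+1)$-action on $P_H$, which is the same action used to make $K(n)_*(P_H)$ into a module over $K(n)_*(K(\Z,n+1))$ in the bar construction computing $K(n)_*(X;H)$. This compatibility is automatic from the definition of $P_H$ as the pullback of the universal bundle, so no further verification is required.
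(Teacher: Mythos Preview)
Your proposal is correct and follows exactly the approach the paper intends: the corollary is stated immediately after the proof of Theorem \ref{khorami_thm} with only the remark that $K(n)_*(P_H)$ acquires operations $b_k$ via the $K(\Z,n+1)$-action on $P_H$, and your write-up simply makes explicit the one algebraic step ($M \otimes_R R/J \cong M/MJ$) that the paper leaves implicit.
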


\section{A twisted Atiyah-Hirzebruch spectral sequence} \label{ahss_section}

In this section, we work entirely at the prime 2, allowing us to concentrate on twisted $K(n)$, rather than $K_n$.  It is quite likely that an odd-primary analogue of the main result, Theorem \ref{ahss_thm}, also holds.  However, there do appear to be some subtle issues at odd primes; for instance, an exact analogue of the differential in Theorem \ref{ahss_thm} cannot possibly be true, owing to the fact that $Q_n$ and $Q_{n-1} \dots Q_1(H)$ are of different dimensions when $p$ is odd.  

The cellular filtration of a CW complex gives rise to the Atiyah-Hirzebruch spectral sequence (AH) for Morava K-theory.  The same holds in the twisted case:

\begin{thm} \label{ahss_thm}

For $H \in H^{n+2}(X)$, there is a spectral sequence converging to $K(n)^*(X; H)$ with 
$E_2^{p, q} = H^p(X, K(n)^q)$.  
The first possible nontrivial differential is $d_{2^{n+1}-1}$; this is given by
$$d_{2^{n+1}-1}(xv_n^k) = (Q_n(x) + (-1)^{|x|}  x\cup (Q_{n-1} \cdots Q_1(H)))v_n^{k-1}.$$

\end{thm}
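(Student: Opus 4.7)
My plan is to construct the spectral sequence from a cellular filtration of $X$, reduce the identification of $d_{2^{n+1}-1}$ to a single universal class via the multiplicative structure, and then pin that class down using Khorami's theorem together with a Postnikov-theoretic analysis of the universal twist.

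\textbf{Construction and location of the first differential.} I would filter the Thom spectrum $X^H$ by the preimages of a cellular filtration of $X$. Because $BGL_1 K(n)$ is simply connected -- indeed $(2^{n+1}-2)$-connected above its basepoint component -- the twist is trivializable over every cell, so the resulting $E_2$-page is $H^p(X;K(n)^q)$ with untwisted coefficients, just as in the standard AHSS. The coefficient ring $K(n)^*$ is concentrated in degrees divisible by $2^{n+1}-2$, and $d_r$ shifts the $q$-index by $1-r$; hence the first potentially nonzero differential is $d_{2^{n+1}-1}$.

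\textbf{Reduction to $d^H(1)$.} The cup product of Theorem \ref{thm prop} pairs the untwisted AHSS for $K(n)^*(X)$ multiplicatively into the twisted AHSS for $K(n)^*(X;H)$, and $v_n$ is a permanent cycle. Applying Leibniz together with $K(n)^*$-linearity gives
\beq
d_{2^{n+1}-1}^H(xv_n^k) \;=\; d_{2^{n+1}-1}^0(x)\cdot v_n^k \;+\; (-1)^{|x|}\, x \cup d_{2^{n+1}-1}^H(1)\cdot v_n^k,
\eeq
where $d^0$ denotes the untwisted differential. The classical Yagita formula $d^0(x)=Q_n(x)v_n^{-1}$ for the untwisted AHSS of $K(n)$ (whose first Postnikov $k$-invariant is $Q_n$) then reduces the theorem to proving the single identity $d_{2^{n+1}-1}^H(1) = Q_{n-1}\cdots Q_1(H)\cdot v_n^{-1}$.

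\textbf{Universal case via Khorami.} By naturality in $H$ it suffices to compute $d^\iota(1)$ in the universal example $X=K(\Z,n+2)$, $H=\iota_{n+2}$. Here Khorami's theorem (Corollary \ref{khorami_cor}) is the key input: the principal $K(\Z,n+1)$-bundle $P_\iota$ is the contractible path fibration, so $K(n)_*(K(\Z,n+2);\iota)\cong K(n)_*/(v_n)=0$. The twisted AHSS therefore converges to $0$, forcing the unit $1\in E_2^{0,0}$ to be killed; by sparseness only $d_{2^{n+1}-1}^\iota$ can do this, so $d_{2^{n+1}-1}^\iota(1)$ is a \emph{nonzero} class in $H^{2^{n+1}-1}(K(\Z,n+2);\F_2)\cdot v_n^{-1}$.

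\textbf{Main obstacle: identifying the universal class.} The remaining task -- showing this nonzero class is precisely $Q_{n-1}\cdots Q_1(\iota_{n+2})\cdot v_n^{-1}$ rather than some other element of the same degree -- is where the real technical work lies. I would interpret $d^\iota(1)\cdot v_n$ as the first Postnikov $k$-invariant of the universal twist $u:K(\Z,n+2)\to BGL_1 K(n)$: since $BGL_1 K(n)$ is $(2^{n+1}-2)$-connected above its basepoint with $\pi_{2^{n+1}-1}=\F_2$, this $k$-invariant lies in $H^{2^{n+1}-1}(K(\Z,n+2);\F_2)$. Adjointing to $\tilde u:K(\Z,n+1)\to GL_1 K(n)$ and using the Ravenel-Wilson description $\tilde u = 1+y$ from the proof of Theorem \ref{existence_thm} -- which pins down $\tilde u_*(b_0)=v_n$ on the generator $b_0\in K(n)_{2^{n+1}-2}K(\Z,n+1)$ -- identifies the first Postnikov approximation $\tilde u_1\in H^{2^{n+1}-2}(K(\Z,n+1);\F_2)$ with $Q_{n-1}\cdots Q_1(\iota_{n+1})$ via the duality pairing $b_0 \leftrightarrow Q_{n-1}\cdots Q_1(\iota_{n+1})$. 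Delooping and applying the Kudo transgression theorem in the Serre spectral sequence of $K(\Z,n+1)\to *\to K(\Z,n+2)$ transgresses this to $Q_{n-1}\cdots Q_1(\iota_{n+2})$, completing the identification. Matching Ravenel-Wilson generators with iterated Milnor primitives is the main technical step.
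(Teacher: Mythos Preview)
Your outline is broadly correct and shares its skeleton with the paper: both construct the spectral sequence from a cellular filtration, both use the module structure over the untwisted AHSS together with Yagita's identification $d^0=Q_n v_n^{-1}$ to reduce to computing $d^H(1)$ (this is exactly Lemma~\ref{tah_lem}), and both ultimately rest on the Ravenel--Wilson fact that $b_0$ is dual to $Q_{n-1}\cdots Q_1(\iota_{n+1})$ (the paper's Lemma~\ref{rep_lem}) plus a transgression argument.

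Where you diverge from the paper is in how you bridge the twisted differential $d^\iota(1)$ to that Ravenel--Wilson input. You propose to interpret $d^\iota(1)\cdot v_n$ directly as the first Postnikov invariant of $u:K(\Z,n+2)\to BGL_1 K(n)$, identify the looped version via $\tilde u=1+y$, and transgress with Kudo. The paper instead passes through the Atiyah--Hirzebruch--Serre spectral sequence for the principal bundle $K(\Z,n+1)\to P_H\to X$: the Khorami quotient $K(n)_*(P_H)\to K(n)_*(X;H)$ induces a map of spectral sequences ${\rm AHS}\to{\rm tAH}$ carrying $b_0\mapsto v_n$, and one reads off $d^{tAH}(1)$ from the transgression $d^{AHS}(x)\bmod(x)$ computed in Lemma~\ref{diff_lem} (via a $K(n)\leftarrow k(n)\to H\F_2$ comparison). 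Your route is conceptually cleaner; the paper's is more concrete in that the identification of $d^\iota(1)$ with a Postnikov invariant is not asserted but produced by the spectral-sequence map itself. If you pursue your approach, that identification is precisely the step that requires justification---it is true, but is not free.

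One small correction to your step~3: the vanishing $K(n)_*(K(\Z,n+2);\iota)=0$ only forces $1$ to support \emph{some} differential; sparseness allows $d_{k(2^{n+1}-2)+1}$ for any $k\geq 1$, so you cannot conclude from this alone that it is $d_{2^{n+1}-1}$ specifically. This is harmless, since your step~4 identifies the class directly and that class is visibly nonzero, making step~3 a motivational sanity check rather than a logical ingredient.
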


Here $Q_n$ is the $n^{\rm th}$ Milnor primitive at the prime $2$.  It may be defined inductively as
$Q_0 = Sq^1$, the Bockstein operation, and 
$Q_{j+1}=Sq^{2^j}Q_j - Q_j Sq^{2^j}$, where 
$Sq^j: H^n(X;\F_2) \to H^{n+j}(X;\F_2)$ 
is the $j$-th Steenrod square.
These operations are derivations\footnote{The signs are of course irrelevant at $p=2$, but will become appropriate in the integral version in section \ref{sec int}.}
 \cite{Mi}
$$
Q_j(xy)=Q_j(x)y + (-1)^{|x|}x Q_j(y)\;.
$$

The proof of the existence of this spectral sequence is identical to the approach taken in \cite{as1} \cite{as2}.  The bulk 
of our work in the next sections, will be devoted to computing the first differential in this spectral sequence.

Some of the properties of the differentials in the spectral sequence for twisted K-theory are discussed 
in \cite{as1} \cite{as2} \cite{KS2}.  Many of the analogous properties also hold in this setting.  In particular, we have:

\begin{prop} For the twisted Atiyah-Hirzebruch spectral sequence for twisted Morava K-theory
\begin{enumerate}
\item ({\it Linearity}) Each differential $d_i$ is a $K(n)_*$-module map.
\item ({\it Normalization}) The twisted differential with a zero twist reduces to the untwisted differential (which may be zero).
\item ({\it Naturality}) If $f:Y \to X$ is continuous, $f^*$ induces a map of spectral sequences from the tAH for $(X, H)$ to the tAH for $(Y, f^* H)$.  On $E_2$-terms it is induced by $f^*$ in cohomology, and on 
$E_\infty$-terms it is the associated graded map induced by $f^*$ in twisted Morava K-theory.
\item ({\it Module}) The tAH for $(X, H)$ is a spectral sequence of modules for the untwisted AHSS for $X$.  Specifically, $d_i(ab) = d_i^u(a) b + (-1)^{|a|} a d_i(b)$ where $a$ comes from the untwisted spectral sequence (with differentials $d_i^u$).
\end{enumerate}
\end{prop}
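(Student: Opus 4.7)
The plan is to derive all four properties from the construction of the tAHSS as the exact couple associated to the skeletal filtration of $X$ applied to the twisted Thom spectrum. Writing $X^{(p)}$ for the $p$-skeleton, $H^{(p)}$ for the restriction of a cellular representative of $H$, and $P_H^{(p)}$ for the pullback of $P_H$ over $X^{(p)}$, one forms Thom spectra $X^{(p), H^{(p)}} := (P_H^{(p)})_+ \wedge_{\Sigma^{\infty} GL_1(K(n))_+} K(n)$ which fit into cofiber sequences of $K(n)$-modules
\[
X^{(p-1), H^{(p-1)}} \longrightarrow X^{(p), H^{(p)}} \longrightarrow (X^{(p)}/X^{(p-1)})^{\bar H^{(p)}} \simeq \bigvee_\alpha S^p \wedge K(n),
\]
the final equivalence holding because the twisting restricts trivially to a wedge of spheres. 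Applying $F_{K(n)}(-, K(n))$ and extracting homotopy groups of the associated exact couple yields the spectral sequence of Theorem \ref{ahss_thm}.

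For \emph{linearity}, every step in the construction takes place in the category of $K(n)$-modules, so the exact couple consists of $K(n)_*$-modules and $K(n)_*$-linear maps; hence each $d_i$ is $K(n)_*$-linear. For \emph{normalization}, if $H \simeq 0$ the principal bundle $P_H$ is trivializable and $X^{(p), H^{(p)}} \simeq X^{(p)}_+ \wedge K(n)$, reducing the construction term-by-term to the ordinary AHSS for $K(n)^*(X)$. For \emph{naturality}, a cellular approximation of $f: Y \to X$ produces compatible restrictions $Y^{(p)} \to X^{(p)}$; pulling back principal bundles then gives a map of filtered Thom spectra, hence of exact couples and spectral sequences. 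On $E_2 = H^*(-; K(n)^*)$ this is ordinary $f^*$, and on $E_\infty$ it is the associated graded of $f^*: K(n)^*(X; H) \to K(n)^*(Y; f^*H)$.

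The \emph{module property} is the principal step. The cup product of Theorem \ref{thm prop} is induced by a diagonal-style map of Thom spectra $X^{H} \to X^{0} \wedge_{K(n)} X^{H}$ arising from $\Delta: X \to X \times X$ combined with the identity $0 + H = H$. A cellular approximation to $\Delta$ makes this pairing compatible with the skeletal filtrations, restricting up to homotopy to a map $X^{(n), H^{(n)}} \to \bigvee_{p+q = n} X^{(p), 0} \wedge_{K(n)} X^{(q), H^{(q)}}$. This upgrades the cup product to a pairing of filtered $K(n)$-module spectra (the untwisted filtration acting on the twisted one), and the standard machinery for spectral sequences of pairings then produces the required pairing between the untwisted AHSS and the tAHSS together with the Leibniz rule $d_i(ab) = d_i^u(a) b + (-1)^{|a|} a\, d_i(b)$.

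The main obstacle is the filtration-compatibility required for the module property, since the diagonal is not strictly cellular; this is handled by cellular approximation and homotopy-commutativity, which suffices for inducing the pairing on $E_r$ for all $r$, with Koszul signs tracked through the smash product. These arguments parallel those of \cite{as1, as2} for twisted complex K-theory, and no conceptual difficulty arises beyond this standard bookkeeping.
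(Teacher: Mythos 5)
The paper does not actually supply a proof of this proposition: it asserts it after remarking that ``the proof of the existence of this spectral sequence is identical to the approach taken in \cite{as1} \cite{as2}'' and that the properties are analogous to those for twisted K-theory. Your proposal carries out exactly the construction the paper gestures at---the exact couple of the skeletal filtration applied to the twisted Thom spectrum, in parallel with Atiyah--Segal---and so is the intended argument rather than an alternative to it. Linearity, normalization, and naturality are handled correctly, and the module property is correctly reduced to a filtered refinement of the cup-product pairing of Theorem \ref{thm prop} (with $H' = 0$) followed by the standard yoga of pairings of exact couples.

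One notational slip worth fixing: after cellularly approximating the diagonal, the filtration-preserving map should land in the filtration-$n$ piece $\bigcup_{p+q \leq n} X^{(p)} \times X^{(q)}$ of $X \times X$, hence in a colimit of the bifiltered Thom spectrum $X^0 \wedge_{K(n)} X^H$; the wedge $\bigvee_{p+q=n} X^{(p),0} \wedge_{K(n)} X^{(q),H^{(q)}}$ is what appears only after passing to associated graded pieces. The display as written conflates the filtered map with its associated graded. Once that is corrected, the pairing-of-exact-couples machinery gives the Leibniz rule $d_i(ab) = d^u_i(a)b + (-1)^{|a|} a\, d_i(b)$ as you assert, with the signs relevant only after lifting to the integral theory of section \ref{sec int}, as the paper notes.
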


\subsection{Preliminary spectral sequences computations}

We will need a cohomology representative of the class $x$ that figured so prominently in our construction of the twisting
(cf. sections \ref{sec exi} and \ref{khorami_section}).

\begin{lem} \label{rep_lem}

In the $K(n)$-Atiyah-Hirzebruch spectral sequence for $K(\Z, n+1)$,
$$H^*(K(\Z, n+1), \F_2) \otimes K(n)_* \implies K(n)^*(K(\Z, n+1)) = K(n)_*[[x]],$$
the class $x$ is represented by the class $Q_{n-1} \dots Q_1(j)$, where $j \in H^{n+1}(K(\Z, n+1), \F_2)$ is the fundamental class.

\end{lem}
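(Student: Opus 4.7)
Combine the collapse of the $K(n)$-Atiyah--Hirzebruch spectral sequence (AHSS) with a degree-count uniqueness argument, plus a direct identification of $Q_{n-1}\cdots Q_1(j)$ as a nonzero permanent cycle via the observation that it is a square in characteristic $2$.

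First set up the AHSS
\[
E_2^{p,q}=H^p(K(\Z,n+1);\F_2)\otimes K(n)^q\implies K(n)^{p+q}(K(\Z,n+1))=K(n)_*[[x]],
\]
and recall from Theorem 12.3 of \cite{rw} that it collapses at $E_{2^{n+1}}$, with only $d_{2^{n+1}-1}=v_nQ_n$ possibly nontrivial. Since $K(\Z,n+1)$ is $n$-connected and $|x|=2^{n+1}-2=|v_n|$, a degree count identifies the $\F_2$-basis $\{v_n^{1-k}x^k : k\ge 0\}$ of $K(n)^{|x|}(K(\Z,n+1))$ with the AHSS filtration pieces $E_\infty^{k|v_n|,(1-k)|v_n|}$, each one-dimensional. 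In particular, the ``leading term'' of $x$ lies in $E_\infty^{|v_n|,0}$. Since $|Q_n|>|v_n|$ rules out any $\im Q_n$-contribution in cohomological degree $2^{n+1}-2$, this subspace is
\[
E_\infty^{|v_n|,0}=\ker Q_n\cap H^{2^{n+1}-2}(K(\Z,n+1);\F_2)\cong\F_2,
\]
so any nonzero element represents $x$ up to a $K(n)_*$-unit.

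To exhibit such an element, I would prove by induction on $k$ ($1\le k\le n-2$) that
\[
Q_k Q_{k-1}\cdots Q_1(j)=Sq^{2^{k+1}-1}Sq^{2^k-1}\cdots Sq^3(j)
\]
in $H^*(K(\Z,n+1);\F_2)$. The base case $Q_1(j)=Sq^3(j)$ uses $Sq^1(j)=0$ and the decomposition $Q_1=Sq^3+Sq^2Sq^1$. For the inductive step, set $y_k:=Sq^{2^{k+1}-1}\cdots Sq^3(j)$; this class is integral because each $Sq^{2^i-1}$ for $i\ge 2$ preserves the kernel of $Sq^1$ (using $Sq^1Sq^{2m+1}=0$ for $m\ge 1$, by Adem). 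Expanding $Q_{k+1}$ in the admissible basis, the summands with a trailing $Sq^1$ vanish on $y_k$, while the non-leading ``cross'' summands cancel modulo $2$ by Adem-relation bookkeeping tailored to the specific shape of $y_k$, leaving only $Sq^{2^{k+2}-1}(y_k)$. At the final step $k=n-1$, the class $y_{n-2}$ has degree exactly $2^n-1=|Sq^{2^n-1}|$, so by the Steenrod axiom $Sq^{|y|}(y)=y^2$,
\[
Q_{n-1}(y_{n-2})=Sq^{2^n-1}(y_{n-2})=(y_{n-2})^2.
\]
Because $(y_{n-2})^2$ is a square in characteristic $2$ and $Q_n$ is a derivation, $Q_n\bigl((y_{n-2})^2\bigr)=2\,y_{n-2}Q_n(y_{n-2})\equiv 0\pmod 2$. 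Hence $Q_{n-1}\cdots Q_1(j)$ is a nonzero element of $\ker Q_n\cap H^{2^{n+1}-2}$, which by the uniqueness established in the first paragraph must represent $x$.

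The main technical obstacle is the inductive claim in the second paragraph---specifically, the verification that the non-leading summands in the admissible expansion of $Q_{k+1}$ cancel on the integral class $y_k$. This amounts to a careful Adem-relation calculation. A potentially cleaner alternative that sidesteps this bookkeeping is to invoke Ravenel--Wilson's Hopf-ring construction of $x$ as their class $x_S$ with $S=(1,2,\ldots,n-1)$, and identify the mod-$v_n$ reduction of $x_S$ with $Q_{n-1}\cdots Q_1(j)$ directly from their formulas.
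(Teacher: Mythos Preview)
Your primary route is genuinely different from the paper's, and the gap you flag is real. The paper works on the dual side: Ravenel--Wilson construct $b_0$ (dual to $x$) as $\delta_*(a_{(0)}\circ a_{(1)}\circ\cdots\circ a_{(n-1)})$ in the Hopf ring $K(n)_*K(\Z/2,*)$, where $\delta:K(\Z/2,n)\to K(\Z,n+1)$ realizes the Bockstein; the natural map $H_*(K(\Z/2,n);\F_2)\to\mathcal{A}_*$ carries the AHSS representative of this circle-product to $\tau_0\tau_1\cdots\tau_{n-1}$ (citing \cite{rwy}), and after applying $\delta_*$ and dualizing one reads off $Q_{n-1}\cdots Q_1(j)$ directly. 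This is exactly the ``cleaner alternative'' you describe in your final sentence, and it sidesteps all Adem bookkeeping.

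Your direct approach---uniqueness in $E_\infty^{|v_n|,0}$ plus the observation that $Q_{n-1}\cdots Q_1(j)$ is a square, hence $Q_n$-closed---is attractive but has two soft spots. First, the inductive identity $Q_k\cdots Q_1(j)=Sq^{2^{k+1}-1}\cdots Sq^3(j)$ needs more than ``Adem bookkeeping tailored to the shape of $y_k$.'' A clean way to fill it: use the classical identity $Q_k\cdots Q_1 Q_0 = Sq^{2^{k+1}-1}\cdots Sq^3 Sq^1$ in $\mathcal{A}$; then $D:=Q_k\cdots Q_1 - Sq^{2^{k+1}-1}\cdots Sq^3$ satisfies $D\cdot Sq^1=0$, and since $\mathcal{A}$ is free as a right module over the exterior algebra $E(Sq^1)$, one gets $D\in\mathcal{A}\cdot Sq^1$, whence $D(j)=0$ because $Sq^1 j=0$. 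Second, your uniqueness argument has a subtle circularity: you infer that each $E_\infty^{k|v_n|,(1-k)|v_n|}$ is one-dimensional from the filtration on $K(n)_*[[x]]$, but identifying the AHSS filtration with the $x$-adic one already presupposes that $x$ has AHSS filtration exactly $|v_n|$---part of what is to be shown. You can break the circle by appealing to Ravenel--Wilson's explicit identification of the permanent cycles in the collapsed spectral sequence, rather than arguing backwards from the target.
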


\begin{proof}

This follows from Ravenel-Wilson's construction of $b_0$, the class dual to $x$.  The former is defined as
$$b_0 = \delta_*(a_{(0)} \circ a_{(1)}\circ \cdots \circ a_{(n-1)})$$
where $\circ$ is the product in the Hopf ring structure on $K(n)_*(K(\Z / 2, *))$, and $\delta: K(\Z / 2, n) \to K(\Z, n+1)$ 
is the map realizing the Bockstein in cohomology.  The natural map $H_*(K(\Z / 2, n), \F_2) \to \mathcal{A}_*$ (the dual 
Steenrod algebra) carries the AH representative of 
$a_{(0)} \circ a_{(1)}\circ \cdots \circ a_{(n-1)}$ to $\tau_0 \cdot \tau_1 \cdots \tau_{n-1}$ (see \cite{rwy}, section 8.3.1).  

Therefore, $b_0$ is represented by a class in $H_*(K(\Z, n+1), \F_2)$ mapping to $\tau_1 \cdots \tau_{n-1}$; dually, 
$x$ is represented by  $Q_{n-1} \dots Q_1(j)$.

\end{proof}

We will also need to study the Atiyah-Hirzebruch-Serre spectral sequence (AHS) for the principal $K(\Z, n+1)$-fibration 
$p: P_H \to X$ associated to the twisting class $H$:
$$H^*(X, K(n)^* K(\Z, n+1)) \implies K(n)^*(P_H).$$

\begin{lem} \label{diff_lem}

The (transgression) differential in AHS is
$$d^{AHS}_{2^{n+1}-1}(x) = Q_{n-1} \dots Q_{1} (H) \mod (x).$$

\end{lem}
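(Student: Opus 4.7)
The plan is to reduce to the universal path-loop fibration, perform the transgression calculation there in mod $2$ cohomology where Kudo's theorem and the stability of the Milnor primitives do most of the work, and then lift the answer to $K(n)^\ast$ via Lemma \ref{rep_lem}.

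First, by naturality of the Atiyah-Hirzebruch-Serre spectral sequence, it suffices to treat the universal case. The bundle $P_H \to X$ is classified by $H: X \to K(\Z, n+2)$, and is the pullback of the universal principal $K(\Z, n+1)$-bundle
\[
K(\Z, n+1) \to PK(\Z, n+2) \to K(\Z, n+2),
\]
with $H = \iota_{n+2}$ the fundamental class and total space contractible. So I would replace $X$ with $K(\Z, n+2)$ and $H$ with $\iota_{n+2}$; the AHSS then converges to $K(n)_\ast$, and in particular the class $x \in E_2^{0, 2^{n+1}-2}$ must eventually be killed. The first possible differential on $x$ is $d_{2^{n+1}-1}$, whose target has the correct degree $2^{n+1}-1$ and lies in $H^{2^{n+1}-1}(K(\Z, n+2); \F_2)$ since $K(n)^0 = \F_2$.

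Next, in the mod~$2$ Serre spectral sequence of the path-loop fibration, Kudo's transgression theorem gives that the fundamental class $j \in H^{n+1}(K(\Z, n+1); \F_2)$ transgresses to the mod~$2$ reduction of $\iota_{n+2}$. The Milnor primitives $Q_1, \dots, Q_{n-1}$ are stable cohomology operations and hence commute with the transgression, so
\[
d^{Serre}\bigl( Q_{n-1} \cdots Q_1 (j) \bigr) \;=\; Q_{n-1} \cdots Q_1 (\iota_{n+2})
\]
on the appropriate page of the mod~$2$ Serre spectral sequence. In particular this gives the candidate expression for the $K(n)^\ast$-transgression once one identifies the representative of $x$.

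To lift to $K(n)^\ast$, I would invoke Lemma \ref{rep_lem}: the class $x \in K(n)^{2^{n+1}-2}(K(\Z,n+1))$ is represented on the $E_\infty$-page of the AHSS for $K(n)^\ast(K(\Z, n+1))$ by $Q_{n-1} \cdots Q_1(j)$. Comparing the $K(n)^\ast$-Serre SS of the fibration with the mod~$2$ one — using that in the relevant range of total degrees, where no higher-$v_n$ corrections can yet enter, the $K(n)^\ast$-Serre differential on the lowest-filtration piece is computed by the mod~$2$ Serre differential applied to the AHSS representative — yields $d^{AHS}_{2^{n+1}-1}(x) = Q_{n-1} \cdots Q_1(H)$ up to indeterminacy. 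The indeterminacy arises from the ambiguity in lifting $x$ to a chain-level representative: different lifts differ by elements of higher cellular filtration in $K(n)^\ast(F) = K(n)_\ast[[x]]$, which lie in the ideal $(x)$, giving the ``$\mathrm{mod}\,(x)$'' in the statement. By naturality, the formula in the universal case pulls back to give the general statement for any class $H \in H^{n+2}(X)$.

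The main obstacle is making this last comparison of spectral sequences rigorous: one must argue that the $K(n)^\ast$-Serre transgression on $x$ is genuinely determined by the mod~$2$ Serre transgression on its AHSS representative $Q_{n-1}\cdots Q_1(j)$, verify that no genuinely new contributions appear at this bidegree before any $v_n$-multiplication can occur, and precisely identify the indeterminacy with the ideal $(x) \subset K(n)_\ast[[x]]$. This requires careful bookkeeping of the interplay between the fiber's AHSS filtration and the Serre filtration in the $K(n)^\ast$-cohomology of the total space.
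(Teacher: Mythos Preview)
Your outline follows the same arc as the paper's proof: reduce to the universal path--loop fibration over $K(\Z,n+2)$, compute the mod~$2$ transgression of the representative $Q_{n-1}\cdots Q_1(j)$ using commutation of stable operations with transgression, and then transport the answer to the $K(n)^*$-Serre spectral sequence via Lemma~\ref{rep_lem}. The ``main obstacle'' you flag at the end is exactly the point where your argument is incomplete, and the paper's resolution of it is worth knowing.

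The gap is that there is no map of ring spectra between $K(n)$ and $H\F_2$ in either direction, so there is no map of Atiyah--Hirzebruch--Serre spectral sequences along which to transport the mod~$2$ computation. Your appeal to ``the relevant range of total degrees, where no higher-$v_n$ corrections can yet enter'' is the right intuition, but as stated it is not a comparison of spectral sequences; it is a heuristic that the two theories behave similarly in low degrees. The paper makes this precise by interposing the \emph{connective} Morava K-theory $k(n)$, which admits genuine maps of ring spectra
\[
K(n) \xleftarrow{\;\ell\;} k(n) \xrightarrow{\;P\;} H\F_2,
\]
the first being localization at $v_n$ and the second the zeroth Postnikov section. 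One lifts $x$ to a class $X\in k(n)^*K(\Z,n+1)$ with the same AHSS representative, computes $d^{AHS}(X)$ by pushing forward along $P$ (where your mod~$2$ transgression argument applies verbatim), and then observes that in the target bidegree $(2^{n+1}-1,0)$ the map $P$ is an isomorphism while $\ell$ is an isomorphism modulo $(x)$. That last point is exactly what produces the ``mod $(x)$'' indeterminacy and replaces your informal discussion of lifting ambiguity. Once you insert $k(n)$ as the bridge, everything else you wrote goes through.
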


\begin{proof}

It suffices (via pullback along $H$) to show this in the universal case where 
$X = K(\Z, n+2)$, and $H = \iota\in H^{n+2}(K(\Z, n+2), \F_2)$ is the fundamental class.  
In this case $P_H \simeq *$, so the AHS is of the form
$$H^*(K(\Z, n+2), \F_2) \otimes K(n)_*[[x]] \implies K(n)^*.$$

We compare the Atiyah-Hirzebruch-Serre spectral sequences for three different cohomology theories: $K(n)$, its 
connective analogue $k(n)$, and mod $2$ cohomology $H^*$.  There are natural transformations 
$$\xymatrix@1{K(n) & k(n) \ar[l]_-{\ell} \ar[r]^-{P} & H^*.}$$ 
The first transformation corresponds to localization at $(v_n)$, and the second is 
the zeroth Postnikov section of $k(n)$.  These give rise to maps of AHS:
$$\xymatrix{
M \otimes K(n)^*K(\Z, n+1) \ar@{=>}[d] & M \otimes k(n)^*K(\Z, n+1)  \ar[l]_-{\id \otimes \ell} \ar[r]^-{\id \otimes P} 
\ar@{=>}[d]  & M \otimes H^*K(\Z, n+1) \ar@{=>}[d] \\
K(n)^* & k(n)^* \ar[l]^-{\ell} \ar[r]_-{P} & H^* 
}$$
where $M = H^*(K(\Z, n+2), \F_2)$.

%It is easy to see that the same element $Q_{n-1} \dots Q_1(j)$ gives rise to a permanent cycle in the AH for $k(n)^*K(\Z, n+1)$.  

The class $x \in K(n)^{2^{n+1}-2} K(\Z, n+1)$ lifts to a
class $X \in k(n)^{2^{n+1}-2}K(\Z, n+1)$ with $\ell(X) = x$.  In fact, it actually lifts to $BP$; this is Tamanoi's ``$BP$ fundamental class" $\vartheta$ of \cite{tamanoi97}; see also Section \ref{conj_section} below.   Furthermore, 
Lemma \ref{rep_lem} implies that $P(X) = Q_{n-1} \dots Q_1(j)$.  Since the transgression commutes with
 Steenrod operations, we have
\begin{eqnarray*}
P(d^{AHS}_{2^{n+1}-1}(X)) & = & d^{AHS}_{2^{n+1}-1}(PX) \\
 & = & d^{AHS}_{2^{n+1}-1}(Q_{n-1} \dots Q_1(j)) \\
 & = & Q_{n-1} \dots Q_1(d^{AHS}_{{n+2}}(j)) \\
 & = & Q_{n-1} \dots Q_1(\iota).
\end{eqnarray*}
In bidegree $(2^{n+1}-1, 0)$, $P$ is an isomorphism, so in the $k(n)$-AHS we may 
conclude that $d^{AHS}_{2^{n+1}-1}(X) = Q_{n-1} \dots Q_1(\iota)$.  Furthermore, in that bidegree, 
$\ell$ is an isomorphism mod $(x)$ and so we see that in the $K(n)$-AHS
$$d^{AHS}_{2^{n+1}-1}(X) = Q_{n-1} \dots Q_1(\iota) \mod (x).$$

\end{proof}

\subsection{The first differential}

Because of the sparsity of the homotopy in $K(n)_*$, the first non-vanishing differential in tAH,
the twisted AH spectral sequence,  is $d_{2^{n+1}-1}$.

\begin{lem} \label{tah_lem}

There is a natural transformation $\phi_n : H^{n+2}(-, \Z) \to H^{2^{n+1}-1}(-, \F_2)$ with the 
property that, in the twisted Atiyah-Hirzebruch spectral sequence converging to $K(n)^*_H(X)$,
$$d_{2^{n+1}-1}(xv_n^k) = (Q_n(x) + (-1)^{|x|}  x \cup \phi_{n}(H))v_n^{k-1} $$
for $x \in H^*(X, \F_2)$ and $k \in \Z$.

\end{lem}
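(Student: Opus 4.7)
The plan is to isolate the twist-dependent part of the first nontrivial differential by evaluating $d_{2^{n+1}-1}$ on the constant class $1$, and then propagate the formula to an arbitrary class $xv_n^k$ using the module structure over the untwisted AHSS (property 4 of the preceding proposition) together with $K(n)_*$-linearity (property 1). The sparsity of $K(n)^*$, whose only nonzero groups are $K(n)^{k(2^{n+1}-2)}=\F_2 v_n^k$, forces any nonzero $d_r$ with bidegree shift $(r,1-r)$ to satisfy $r\equiv 1\pmod{2^{n+1}-2}$, ruling out all $r$ in the range $2\le r<2^{n+1}-1$, so $d_{2^{n+1}-1}$ is indeed the first candidate.

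Apply this to $1\in H^0(X;K(n)^0)$ sitting in the twisted $E_2$-page. The target of $d_{2^{n+1}-1}$ is constrained to lie in $H^{2^{n+1}-1}(X;K(n)^{-(2^{n+1}-2)})=H^{2^{n+1}-1}(X;\F_2)\cdot v_n^{-1}$, so we may define
$$\phi_n(H)\in H^{2^{n+1}-1}(X;\F_2) \quad \mbox{by} \quad d_{2^{n+1}-1}(1)=\phi_n(H)\, v_n^{-1}.$$
Naturality of the tAHSS in twisted pairs (property 3) ensures that $\phi_n$ assembles into a natural transformation $H^{n+2}(-;\Z)\to H^{2^{n+1}-1}(-;\F_2)$: for any $f:Y\to X$, the pullback carries $1_X$ to $1_Y$ and commutes with the respective differentials, yielding $f^*\phi_n(H)=\phi_n(f^*H)$.

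Using the classical identification $d^u_{2^{n+1}-1}(x)=Q_n(x)\,v_n^{-1}$ for the first nontrivial differential in the untwisted AHSS of $K(n)$, the Leibniz rule supplied by the module property combined with $K(n)_*$-linearity gives, for $x\in H^*(X;\F_2)$,
$$d_{2^{n+1}-1}(xv_n^k)=v_n^k\bigl(d^u_{2^{n+1}-1}(x)\cdot 1 + (-1)^{|x|}x\cdot d_{2^{n+1}-1}(1)\bigr)=\bigl(Q_n(x)+(-1)^{|x|}x\cup\phi_n(H)\bigr)v_n^{k-1},$$
which is the stated formula. The substantive content beyond this lemma is not the existence of $\phi_n$ but its concrete identification with $Q_{n-1}\cdots Q_1(H)$, as promised in Theorem \ref{ahss_thm}; that identification is the main obstacle and will require reducing to the universal case $X=K(\Z,n+2)$ and comparing against the Serre spectral sequence for the principal $K(\Z,n+1)$-bundle $P_H\to X$ (using Lemma \ref{diff_lem} and Lemma \ref{rep_lem}). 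For the present lemma, however, only the existence and naturality of $\phi_n$ are required, and both drop out of the degree count above.
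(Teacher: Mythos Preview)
Your argument is correct and follows essentially the same route as the paper: reduce to $k=0$ by $K(n)_*$-linearity, use the module structure over the untwisted AHSS to write $d_{2^{n+1}-1}(x)=d^u_{2^{n+1}-1}(x)\cdot 1+(-1)^{|x|}x\cdot d_{2^{n+1}-1}(1)$, invoke the known identification $d^u_{2^{n+1}-1}=Q_n$, and then define $\phi_n(H)$ by bidegree considerations and naturality. Your additional remark about sparsity forcing $d_{2^{n+1}-1}$ to be the first possible differential, and your closing comment isolating what remains for Theorem~\ref{ahss_thm}, are accurate and helpful context but not part of the paper's own proof.
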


\begin{proof}

Since this is a spectral sequence of modules over $K(n)_*$, it is sufficient to show that this is true for $k=0$. 
 Further, since the twisted AHSS is a module over the untwisted AHSS, we can compute 
$$d_{2^{n+1}-1}(x) = d_{2^{n+1}-1}(x\cdot 1) = d^u_{2^{n+1}-1}(x) \cdot 1 + (-1)^{|x|} x \cdot d_{2^{n+1}-1}(1),$$
where $d_{2^{n+1}-1}^u$ denotes the differential in the untwisted AHSS.  But this is known \cite{Y} to be given by 
$Q_n$.  So it is enough to show that $d_{2^{n+1}-1}(1) = \phi_{n}(H) v_n^{-1}$.  Bidegree considerations imply 
that $d_{2^{n+1}-1}(1)$ must take the form $\beta v_n^{-1}$; naturality of the spectral sequence in the twist ensures 
that $\beta$ is natural in $H$, and so must be of the form $\phi_{n}(H)$.

\end{proof}

\noindent {\it Proof of Theorem \ref{ahss_thm}.}
Return to the (homological) Atiyah-Hirzebruch-Serre spectral sequence (AHS):
$$H_*(X, K(n)_* K(\Z, n+1)) \implies K(n)_*(P_H).$$
The quotient map $K(n)_*(P_H) \to K(n)_*(X, H)$ (as in section \ref{khorami_section}) induces a map of spectral sequences
$$H_*(X, K(n)_* K(\Z, n+1)) \to H_*(X, K(n)_*)$$
from the above AHS spectral sequence to the twisted Atiyah-Hirzebruch spectral sequence (tAH).  On the level of $E_2$-terms, 
it is induced by map of coefficients $K(n)_* K(\Z, n+1) \to K(n)_*$ which is give by the universal twisting $y$.  That is, 
it is the map $\otimes_k R(b_k) \to K(n)$ which sends $b_0$ to $v_n$ and all other $b_i$ to 0.  This map is surjective, 
so we can recover the differential in tAH from knowledge of the differential in AHS.  

By degree considerations, for a class $z \in H_*(X, \F_p) = E^2_{*, 0}$, we have
$$d^{AHS}_{2^{n+1}-1}(z) = a(z) v_n + b(z) b_0 + \dots,$$
where the higher order terms all involve $b_i$ for $i>0$, and $a$ and $b$ are operations on $z$ which lower degree 
by $2^{n+1}-1$.  Since AHS is a comodule over the $K(n)$-Atiyah-Hirzebruch spectral sequence for $X$, it is easy to 
see that $a = Q_n^{\vee}$ is dual to the Milnor primitive operation $Q_n$.  Mapping to $tAH$, we see
$$d^{tAH}_{2^{n+1}-1}(z) = Q_n^{\vee}(z) v_n + b(z) v_n.$$
Comparing this to the dual spectral sequence in Lemma \ref{tah_lem}, we see that $b(z) = (-1)^{|z|} z \cap (\phi_{n}(H))$.

Rewriting this result in the cohomology AHS spectral sequence
$$H^*(X, K(n)^* K(\Z, n+1)) = H^*(X) \otimes K(n)_*[[x]] \implies K(n)^*(P_H),$$
we deduce that $\phi_{n}(H) = d^{AHS}_{2^{n+1}-1}(x) \mod (x)$, for when we pair it against a homology class,
\begin{eqnarray*}
\langle d^{AHS}_{2^{n+1}-1}(x), z \rangle & = & \langle x, d^{AHS}_{2^{n+1}-1}(z) \rangle \\
 & = & \langle x, a(z) v_n + b(z) b_0 + \dots \rangle \\
 & = & b(z) \\
 & = & (-1)^{|z|} z \cap (\phi_{n}(H)).
\end{eqnarray*}
So, by Lemma \ref{diff_lem}, $\phi_{n}(H) = Q_{n-1} \cdots Q_1(H)$.

\qed

\section{Twisted Morava E-theory} \label{e_section}

In this section, we will lift the previous constructions to the (Landweber exact) Lubin-Tate cohomology theory most commonly known as Morava E-theory.  We begin with a reminder of this structure; \cite{gh, rezk_notes} are much better introductions to this material, and we will give a very terse summary of the relevant part of the latter.  
%For simplicity, we will restrict our focus to the prime 2, although much of this could be adapted more generally (see, e.g. \cite{wes}).

\subsection{Defining Morava E-theory}

There are several cohomology theories that compete for the title of ``Morava E-theory":

\begin{itemize}

\item $\BP{n}$, the truncated Brown-Peterson spectrum, with coefficients
$$\BP{n}_* = \Z_{(p)}[v_1, \dots, v_n].$$
This is constructed by killing the ideal $(v_{n+1}, v_{n+2}, \dots)$ in the homotopy of $BP$, the Brown-Peterson spectrum, the home of the universal $p$-typical formal group law.

\item $E(n)$, the  Johnson-Wilson spectrum, with coefficients
$$E(n)_* = \Z_{(p)}[v_1, \dots, v_{n-1}, v_n^{{\pm 1}}].$$
$E(n)$ is constructed as the localization of $\BP{n}$ at $v_n$.

\item $\widehat{E(n)}$, the completed Johnson-Wilson spectrum, with coefficients the completion of the previous at the ideal $I = (p, v_1, ..., v_{n-1})$:
$$\widehat{E(n)}_* = \Z_{(p)}[v_1, \dots, v_{n-1}, v_n^{{\pm 1}}]^{\wedge}_I.$$

\item $E(k, \Gamma)$, the Lubin-Tate spectrum, associated to the universal deformation of a formal group law $\Gamma$ over $k$.

\end{itemize}

Our techniques in this section will produce a twisting of certain Lubin-Tate spectra.  The argument also works for $\widehat{E(n)}$, though our techniques fail for $\BP{n}$ and $E(n)$, as their coefficients are not complete local rings.  We offer a conjecture in section \ref{conj_section} which provides a slightly more geometric interpretation of these twistings, and, if true, would yield twistings of $\BP{n}$ and $E(n)$.

Let us review the Lubin-Tate spectra in more detail: $k$ is a perfect field of positive characteristic $p$, and $\Gamma \in k[[x, y]]$ is a formal group law defined over $k$ of height $n$.  A deformation $(B, G, i)$ of $(k, \Gamma)$ is a complete local ring $B$ with maximal ideal $\m$, a formal group law $G$ on $B$, and a ring homomorphism $i:k \to B / \m$ with $i^*\Gamma = \pi^* G$, where $\pi: B \to B / \m$ is the quotient map.  

It is a theorem of Lubin-Tate \cite{lt} that there exists a \emph{universal} such deformation.  This is a complete local ring $A(k, \Gamma)$ with maximal ideal $\m$ such that $A(k, \Gamma) / \m$ is isomorphic to $k$.  Concretely, $A(k, \Gamma)$ is the null-graded ring
$$\begin{array}{ccc}
A(k, \Gamma) \cong \W(k)[[u_1, \dots, u_{n-1}]] & {\rm and} & \m = (p, u_1, \dots, u_{n-1}),
\end{array}$$
where $\W(k)$ is the ring of Witt vectors in $k$ (e.g., $\Z_p$ if $k=\F_p$).

This ring is equipped with a formal group law $F$ from which all deformations of $\Gamma$ are pulled back (up to $\star$-isomorphism).  Specifically, for any deformation $(B, G, i)$ of $(k, \Gamma)$, there exists a ring homomorphism $\phi: A(k, \Gamma) \to B$ (over $k$) and a unique $\star$-isomorphism $f: \phi^*F \to G$ (that is, $f = f(x)$ is an isomorphism of formal group laws over $B$ which reduces to $x$ modulo the maximal ideal of $B$).

The theorem of Goerss-Hopkins-Miller \cite{gh, gh2, rezk_notes} shows that there exists an essentially unique even periodic, $E_\infty$ ring spectrum $E(k, \Gamma)$ realizing this universal deformation; that is,
$$\begin{array}{ccc}
\pi_*E(k, \Gamma) = A(k, \Gamma)[u^{\pm1}], & {\rm and} & \Spf E(k, \Gamma)^*\C P^\infty \cong F.
\end{array}$$
$E(k, \Gamma)$ is closely related to $\widehat{E(n)}$.  In particular, when $\Gamma$ is the Honda formal group law, the map $BP_* \to E(k, \Gamma)_*$ which classifies $\Gamma$ carries $v_k$ to $u^{p^k-1}u_k$ for $k<n$, $v_n$ to $u^{p^n-1}$, and $v_m$ to $0$, when $m>n$.

Since the ideal $\m$ is generated by a regular sequence, the residue field $k[u^{\pm 1}] =  A(k, \Gamma)[u^{\pm1}] / \m$ is realized as the homotopy groups of a spectrum which we will denote by $K(k, \Gamma)$.  When $k$ is the prime field $\F_p$ and $\Gamma$ is the Honda formal group law with $p$-series $[p](x) = x^{p^n}$, $K(k, \Gamma)$ is a 2-periodic form of the ``standard" Morava $K$-theory:
$$K(k, \Gamma) \simeq K(n)[u]/(u^{p^n-1} - v_n).$$
More generally, if $k$ is a finite extension of $\F_p$, then $K(k, \Gamma)$ is a $K(n)$-algebra spectrum which is a finite rank free $K(n)$-module (see, e.g., Corollary 10 of \cite{jacob_notes}).

\begin{exmp} 

If $k=\F_p$, and $\Gamma = \mathbb{G}_m$ is the multiplicative group, then $A(k, \Gamma) = \Z_p$; the universal deformation is once again the multiplicative group (only over $\Z_p$ instead of $\F_p$).  The resulting ring spectrum $E(k, \Gamma)$ is $p$-completed K-theory, and $K(k, \Gamma) = K_1$ is mod $p$ K-theory.

\end{exmp}

One consequence of the Goerss-Hopkins-Miller theorem is that for any of these Lubin-Tate theories $E(k, \Gamma)$, the space of units, $GL_1 E(k, \Gamma)$, is not just an $A_\infty$ monoid (as above for $K(n)$), but an infinite loop space.  Its deloopings give rise (as in, e.g. \cite{abghr}) to a connective spectrum $gl_1 E(k, \Gamma)$, with $\Omega^\infty gl_1 E(k, \Gamma) = GL_1 E(k, \Gamma)$.

\begin{assump} \label{fgl_assump} We will make two main assumptions on $(k, \Gamma)$ for the rest of this paper:

\begin{enumerate} 

\item The formal group law $\Gamma$ over $k$ is $p$-typical, with $p$-series $[p](x) = x^{p^n}$.  Equivalently, $\Gamma$ is induced from the map $BP_* \to k$ carrying $v_n$ to $1$ and all other $v_i$ to $0$. Essentially, we are assuming that $\Gamma$ is isomorphic to an extension of the Honda formal group law.

\item When $n$ is even, $k$ contains a primitive $2p-2^{\rm nd}$ root of unity, $\xi$.

\end{enumerate}

\end{assump}

The first assumption is forced by our use of the results of \cite{rw, jw}, whose computations are all based on the standard Morava K-theory $K(n)$ associated to the Honda formal group law over $\F_p$.  We expect that Hopkins-Lurie's approach \cite{ambidexterity} to these computations will allow one to extend our results to a more general setting.  The second assumption is required, as in section \ref{odd_primes_section}, to deal with a troublesome sign in the Ravenel-Wilson calculations.

\begin{notation}

For brevity, we will employ the notation $E_n$ for $E(k, \Gamma)$, where $(k, \Gamma)$ is as in Assumption \ref{fgl_assump}.  We will also write $K_n$ for $K(k, \Gamma)$. 

\end{notation}

\subsection{$E_\infty$ twistings}

We now consider twists for Morava E-theory.  Our main result (Theorem 2 of the introduction) is the following:

\begin{thm} \label{e(n)_thm}

For each $n\geq 1$, there are canonical isomorphisms
$$\pi_0 \Map_{E_\infty} (K(\Z, n+2), BGL_1 E_n) \cong [\Sigma^{n+1} H\Z, gl_1 E_n] \cong \Hom_{\textit{$E_{n*}$-alg}}({E_n}_*K(\Z, n+1), {E_n}_*),$$
and the latter group is isomorphic to the $p$-adic integers $\Z_p$.

\end{thm}

Let $\varphi_n: H\Z \to gl_1 E_n$ be a topological generator; the induced map $K(\Z, n+1) \to GL_1 E_n$ allows us to twist $E_n$ by a class in $H^{n+2}$, as we did for $K(n)$.  By definition, there is an $A_\infty$ map $\pi: E_n \to {K_n}$ which quotients by the ideal $\m$.

\begin{prop} \label{nontriv_prop}

The composite map $\Omega^{\infty}(\pi \circ \varphi_n): K(\Z, n+1) \to GL_1 {K_n}$ is the universal twisting $u$.

\end{prop}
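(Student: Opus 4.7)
The plan is to compute the element of $tw_{K_n}(K(\Z, n+2))$ represented by $\Omega^{\infty}(\pi \circ \varphi_n)$ (via the $A_\infty$ adjunction in Proposition \ref{maps_prop}) and match it with the topological generator $1+y$. First, I would establish a $K_n$-analogue of Theorem \ref{existence_thm}. Under Assumption \ref{fgl_assump}, $K_n$ is a finite free module over a $2$-periodic form of $K(n)$, so Ravenel-Wilson's calculations base-change to yield $K_n^* K(\Z, n+1) \cong K_n^*[[y]]$ and a cobar spectral sequence collapsing to $K_n^* K(\Z, n+2) \cong K_n^*$. The argument of Theorem \ref{existence_thm} then produces a bijection $tw_{K_n}(K(\Z, n+2)) \cong \Z_2$ with topological generator $1+y$, classified by $K_n^*$-algebra maps $K_n^* K(\Z, n+1) \to K_n^*$.

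Second, I would apply the $E_\infty$ analogue of Proposition \ref{obstruction_prop} to $E_n$ itself. Since $E_n^* K(\Z, n+2) \cong E_n^*$ by the same cobar argument, the Hurewicz map
$$\pi_0 \Map_{E_\infty}(\Sigma^{\infty} K(\Z, n+1)_+, E_n) \to \Hom_{E_n^*\text{-alg}}(E_n^* K(\Z, n+1), E_n^*)$$
is a bijection. Thus $\varphi_n$ is classified by a specific $E_n^*$-algebra map $\Psi$, and by naturality of the Hurewicz map under the ring map $\pi: E_n \to K_n$, the composite $\pi \circ \varphi_n$ corresponds to the reduction $\Psi \otimes_{E_n^*} K_n^*$ modulo the maximal ideal $\m = (p, u_1, \ldots, u_{n-1})$.

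To complete the proof, I would verify that this reduction is the algebra map sending $b_0$ to the periodicity unit $u$ (and hence to $v_n$ after identifying $K_n$ with a $2$-periodic version of $K(n)$) while sending $b_k$ to $0$ for $k>0$; this is precisely the algebra map corresponding to $1+y$ in the parametrization of Theorem \ref{existence_thm}. The main obstacle is bookkeeping rather than substance: one must unpack the inductive construction of $\varphi_n$ in Theorem \ref{e(n)_thm} to confirm that the obstruction-theoretic procedure selects the particular $E_n^*$-algebra map whose mod-$\m$ reduction is the one specified above, rather than any of the other topological generators indexed by $\Z_2$. Once this identification is pinned down, the uniqueness supplied by the obstruction theory immediately yields the claim.
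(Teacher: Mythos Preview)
Your overall strategy matches the paper's: identify the twist via the Hurewicz/obstruction-theory bijection, then reduce the corresponding ${E_n}_*$-algebra map modulo $\m$ and recognize the result as the generator $1+y$. The paper treats this proposition as essentially immediate from the construction of $\varphi_n$, and does not write out a separate proof.

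The one point where your proposal goes astray is the ``main obstacle'' you flag. There is no inductive construction of $\varphi_n$ to unpack. In the paper, $\varphi_n$ is \emph{defined} (just after Proposition~\ref{cotangent_prop}) as the adjoint of the unique $E_\infty$-map $\Phi_n: \Sigma^\infty K(\Z,n+1)_+ \to E_n$ that realizes the specific grouplike element $\alpha = 1+\tx \in E_n^*K(\Z,n+1)$. So the ${E_n}_*$-algebra map $\Psi$ you are looking for is literally $\alpha$, by fiat. Proposition~\ref{bock_prop} says that $\tx$ reduces to $x$ modulo $\m$, so $\alpha$ reduces to $1+x$, which after normalizing by the periodicity unit is exactly $1+y$, the universal twist. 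No further bookkeeping is needed, and there is no ambiguity among the $\Z_2$-worth of choices to resolve.

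In short: your steps (1) and (2) are fine (and step (2) is precisely what Theorem~\ref{gh_thm} plus Proposition~\ref{cotangent_prop} provide), but step (3) is not an obstacle at all once you read the actual definition of $\varphi_n$ given in the paper.
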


The homotopical nontriviality of $\varphi_n$ follows from this Proposition.  It also says that twisted Morava K-theory is in some sense a reduction of twisted Morava E-theory.  The proofs of these results occur after Proposition \ref{cotangent_prop}, below.

\subsection{A conjecture} \label{conj_section}

Consider the Johnson-Wilson spectrum $\BP{n}$. Note that $\BP{0} = H\Z_{(p)}$ is the $p$-local Eilenberg-MacLane spectrum.  Multiplication by $v_n$ gives a cofiber sequence
\beqn \label{sequence_eqn}
\xymatrix@1{\Sigma^{2p^n-2} \BP{n} \ar[r]^-{v_n} & \BP{n} \ar[r]^-{p_n} & \BP{n-1} \ar[r]^-{\Delta_n} & \Sigma^{2p^n-1} \BP{n} \ar[r] & \cdots}.
\eeqn
Here $p_n$ quotients by $v_n$, and $\Delta_n$ is the connecting map in the long exact sequence, and is related to the Bockstein operator $Q_n$ defined by Baker-W\"urgler in \cite{bw}.

Let us now define $\delta_{k}$ as the composite
$$\delta_{k}=\Delta_{k} \circ \cdots \circ \Delta_2 \circ \Delta_1:  \BP{0} \to \Sigma^{2\frac{p^{k+1}-1}{p-1}-(k+2)} \BP{k}.$$
The map $\delta_{k}$ is closely related to a map $\vartheta_{k+2}$ studied by Tamanoi in \cite{tamanoi97}, which he calls the \emph{BP fundamental class of $K(\Z_{(p)}, k+2)$}.

\begin{conj}[$p=2$] \label{conjecture}
{\it
Assuming that $\BP{n}$ admits the structure of an $E_\infty$ ring spectrum\footnote{This is known for $n=1$, classically, and for $n=2$, by \cite{lawson_naumann}.  The assumption is necessary in order to form $gl_1 \BP{n}$.  One could, however, relax this assumption and simply ask for an $A_\infty$ map $\underline{\BP{n-1}}_{2^{n+1}-2} \to GL_1 \BP{n}$.}, there are maps of spectra
$$\psi_n: \Sigma^{2^{n+1}-2} \BP{n-1} \to gl_1 \BP{n}$$
whose induced map in homotopy is $x \mapsto xv_n$.  That is, in all degrees $k\geq2^{n+1}-2$, $(\psi_n)_*$ is the composite
$$\xymatrix@1{ \Z_{(2)}[v_1, \dots, v_{n-1}]_{(k-2^{n+1}+2)} \ar[r]^-{\subseteq} &  \Z_{(2)}[v_1, \dots, v_{n}]_{(k-2^{n+1}+2)} \ar[r]^-{v_n} & \Z_{(2)}[v_1, \dots, v_n]_{k}}.$$
}
\end{conj}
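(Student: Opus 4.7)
The plan is to combine the cofiber-sequence framework of (\ref{sequence_eqn}) with the obstruction theory of Robinson--Goerss--Hopkins (used in Section \ref{sec obs}), adapted from the $A_\infty$ to the $E_\infty$ setting of \cite{abghr}. First I would use the adjunction between $\Sigma^\infty_+ \Omega^\infty$ and $gl_1$: spectrum maps $\Sigma^{|v_n|}\BP{n-1} \to gl_1 \BP{n}$ correspond to $E_\infty$-ring maps $\Sigma^\infty_+ \underline{\BP{n-1}}_{|v_n|} \to \BP{n}$, where $\underline{\BP{n-1}}_{|v_n|} = \Omega^\infty \Sigma^{|v_n|} \BP{n-1}$ is the appropriate space in the $\Omega$-spectrum for $\BP{n-1}$. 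Under a Hurewicz-type map, the desired $\psi_n$ corresponds to the $\BP{n}_*$-algebra homomorphism $\BP{n}_* \underline{\BP{n-1}}_{|v_n|} \to \BP{n}_*$ sending the tautological class to $v_n$ and extending multiplicatively.

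Next, I would apply the $E_\infty$ analogue of Proposition \ref{obstruction_prop}. The obstructions to realizing this algebra map as an actual $E_\infty$-ring map live in topological Andr\'e--Quillen cohomology groups (in place of the Hochschild cohomology appearing in the $A_\infty$ case). In the flat case, these should fit into the $E_2$-term of a spectral sequence abutting to $\BP{n}^*(\underline{\BP{n-1}}_{|v_n|})$, as in Corollary \ref{obs_cor}. The main computational input would be the $\BP{n}$-homology of the Johnson--Wilson spaces $\underline{\BP{n-1}}_k$, in the spirit of Ravenel--Wilson's calculations \cite{rw} for the $K(n)$-case; the goal is to show the relevant obstruction groups (in degrees $k\ge 2$, $s=k-1,k-2$) vanish.

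Alternatively, and more concretely, the cofiber sequence (\ref{sequence_eqn}), shifted by $|v_n|$,
\beq
\Sigma^{2|v_n|}\BP{n} \xrightarrow{v_n} \Sigma^{|v_n|}\BP{n} \xrightarrow{p_n} \Sigma^{|v_n|}\BP{n-1},
\eeq
reduces the construction of $\psi_n$ to producing a spectrum map $g: \Sigma^{|v_n|}\BP{n} \to gl_1 \BP{n}$ whose composite with $v_n$ is null-homotopic, together with a choice of null-homotopy. The starting datum for $g$ is the class $v_n \in \pi_{|v_n|}\BP{n} = \pi_{|v_n|} gl_1 \BP{n}$, giving a map $S^{|v_n|} \to gl_1 \BP{n}$; extending this across the Postnikov tower of $\BP{n}$ while enforcing the vanishing after multiplication by $v_n$ is again an obstruction-theoretic problem, now phrased in terms of the $(gl_1 \BP{n})$-cohomology of $\BP{n}$. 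Identifying $\psi_n$ on homotopy as $x \mapsto xv_n$ then amounts to unraveling the resulting connecting map in homotopy.

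The principal obstacle is the computation of $\BP{n}$-(co)homology of the spaces $\underline{\BP{n-1}}_k$, together with the subsequent analysis of the obstruction spectral sequence. While the $K(n)$-analogue is controlled by Ravenel--Wilson, the integral lift to $\BP{n}$-(co)homology is substantially more delicate, and the requisite vanishing has not been verified in the required generality; the situation is complicated further by the unresolved question of whether $\BP{n}$ is $E_\infty$ for $n \geq 3$. A secondary, more conceptual task is to reconcile the construction with $\varphi_n$ of Theorem \ref{e(n)_thm}: the restriction of $\psi_n$ along the unit $S \to \BP{n-1}$ (composed with the natural map $gl_1 \BP{n} \to gl_1 E_n$ induced by the Landweber-exact map $\BP{n} \to E_n$) should recover $\varphi_n$, and this compatibility should pin down the correct realization.
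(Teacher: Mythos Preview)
This statement is a \emph{conjecture}, and the paper does not prove it. Immediately after stating it, the authors write: ``Rather than proving the conjecture, we will show that the $E_n$-analogue of the desired map $\varphi_n$ exists, using the Andr\'e-Quillen obstruction theory.'' What the paper does offer is \emph{evidence}: the $n=1$ case (the determinantal twist of K-theory), a sketch of how the $n=2$ case might come from the $\sigma$-orientation of \cite{ahs}, and Wilson's space-level splitting $\underline{\BP{n}}_{2^{n+1}-2} \simeq \underline{\BP{n-1}}_{2^{n+1}-2} \times \underline{\BP{n}}_{2(2^{n+1}-2)}$, which produces a map of \emph{spaces} with the correct effect on homotopy but with no evident infinite-loop structure. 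So there is no ``paper's own proof'' to compare against.

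Your proposal is a plausible outline of how one might attack the conjecture, and you are candid about where it breaks down. The obstruction-theoretic route you sketch is exactly the machinery the paper uses to prove the weaker $E_n$-statement (Theorem \ref{e(n)_thm}); the reason the paper can carry that through is that ${E_n}_*$ is a complete local ring, so Goerss--Hopkins' filtration by powers of $\m$ reduces the Andr\'e--Quillen obstructions to a Frobenius/cotangent-complex argument over the residue field (Proposition \ref{cotangent_prop}). For $\BP{n}$ there is no such completeness, and as you note the $\BP{n}$-(co)homology of the spaces $\underline{\BP{n-1}}_k$ is not under control the way the Ravenel--Wilson computations control the $K(n)$-case. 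Your cofiber-sequence alternative likewise reduces to an obstruction problem whose input (the $gl_1\BP{n}$-cohomology of $\BP{n}$) is at least as inaccessible. In short: your plan is reasonable as a strategy, but the gaps you flag are precisely why the authors left this as a conjecture and retreated to the $K(n)$-local setting.
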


There is some evidence for this conjecture.  For $n=1$, $\psi_1$ exists, and its infinite delooping is the 2-localization of a map $\C P^\infty \to GL_1 K$.  This is precisely the inclusion of line bundles into invertible virtual bundles which gives rise to the usual determinantal twisting of K-theory.  For $n=2$, the conjectured map is of the form $\Omega^\infty \psi_2: BU\langle 6 \rangle \to GL_1 \BP{2}$.  Such an $E_\infty$ map would be adjoint to an $E_\infty$ map of the form
$$\Sigma^\infty BU \langle 6 \rangle_+ \to \BP{2}.$$
Such a map might in principle be constructed from the $\sigma$-orientation $MU\langle 6 \rangle \to tmf$ of Ando-Hopkins-Strickland \cite{ahs} and an $\BP{2}$-Thom isomorphism.

Furthermore, in \cite{wilson}, Wilson shows that the fiber sequence associated to 
$(\ref{sequence_eqn})$ splits\footnote{This is a $2$-local version of the splitting 
$BU_\otimes \simeq \C P^{\infty} \times BSU_{\otimes}$ when $n=1$ and $k=2$.}:
$$\underline{\BP{n}}_k \simeq \underline{\BP{n-1}}_k \times \underline{\BP{n}}_{k+2^{n+1}-2}, \mbox{ for $k \leq 2^{n+1}-2$.}$$
One may derive from this (for $k=2^{n+1}-2$) a map of \emph{spaces}
$$\xymatrix@1{\underline{\BP{n-1}}_{2^{n+1}-2} \ar[r]^i & \underline{\BP{n}}_{2^{n+1}-2} \ar[r]^-{v_n} & \underline{\BP{n}}_{0}}\;,$$
where $i$ is Wilson's splitting map.  This induces the desired map in homotopy, and it is apparent that the image of the map lies in the $v_n$ component, and hence in $GL_1 \BP{n}$.  However, it is far from obvious that this is an infinite loop map.

In any case, if the conjectured map $\psi_n$ exists, we may construct the twisting 
\footnote{Or rather, its analogue for $\BP{n}$.}
$\varphi_n$ as the composite
$$\label{diagram_eqn}
\xymatrix{
\Sigma^{n+1} H\Z \ar[r] \ar[d]_-{\varphi_n} & \Sigma^{n+1} H\Z_{(2)} = \Sigma^{n+1} \BP{0} \ar[d]^-{\delta_{n-1}} &  \\
gl_1 \BP{n} &  \Sigma^{2^{n+1}-2} \BP{n-1}\;.  \ar@{-->}[l]^-{\psi_n} \\
}$$
Rather than proving the conjecture, we will show that the $E_n$-analogue of the desired map $\varphi_n$ exists, using the Andr\'{e}-Quillen obstruction theory.

\subsection{The formal group $E_n^* K(\Z, n+1)$}

To begin the obstruction theory calculations, we make use of the Bockstein spectral sequence for Morava E-theory, developed for the completed Johnson-Wilson spectra by Baker-W\"urgler in \cite{bw}.  Write $\m$ for the maximal ideal of ${E_n}_*$.  The $E_1$-page of the spectral sequence is given by
$$E_1^{s, *} = K_n^{*}(X) \otimes_{{K_n}_*} \m^{s}/\m^{s+1} \implies E_n^*(X).$$

We first note that $K_n^{*}K(\Z, n+1) \cong {K_n}_*[[x]]$; this follows from the corresponding statement for $K(n)$ by the fact that ${K_n}$ is a finite rank free $K(n)$-module.  Then the Bockstein spectral sequence for $K(\Z, n+1)$ collapses at $E_1$ since the total degree of each class is concentrated in even dimensions.  We conclude:

\begin{prop} \label{bock_prop}

There is a ring isomorphism
$$E_n^*K(\Z, n+1) \cong {E_n}_*[[\tx]],$$
where $|\tx| = 2\frac{p^n-1}{p-1}$, and projects to $x \in K_n^* K(\Z, n+1)$ modulo $\m$.

\end{prop}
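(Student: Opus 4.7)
The plan is to apply the Bockstein spectral sequence of Baker--W\"urgler (in the form quoted just before the proposition) to $X = K(\Z, n+1)$, show that it collapses for parity reasons, and then promote a lift of the generator $x$ to a power-series description.

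First, I would identify the $E_1$-page. Since $K_n$ is a finite-rank free $K(n)$-module, base change from Ravenel--Wilson's computation $K(n)^* K(\Z, n+1) = K(n)_*[[x]]$ gives $K_n^* K(\Z, n+1) \cong {K_n}_*[[x]]$ with $|x| = 2(p^n-1)/(p-1)$, so
\[
E_1^{s,*} = {K_n}_*[[x]] \otimes_{{K_n}_*} \m^s/\m^{s+1}.
\]

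Next, I would show the spectral sequence collapses at $E_1$ by a parity argument. The class $x$ has even degree and ${K_n}_* = k[u^{\pm 1}]$ is concentrated in even degrees, so $K_n^* K(\Z, n+1)$ is supported in even internal degree. Likewise, $\m = (p, u_1, \ldots, u_{n-1})$ is generated by elements of degree $0$ and ${E_n}_*$ is evenly graded, so each subquotient $\m^s/\m^{s+1}$ is concentrated in even degree as well. Consequently $E_1^{s,t}$ vanishes for odd $t$, and since each Bockstein differential raises $t$ by $1$ (as the generators of $\m$ have even degree), every $d_r$ must vanish.

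Finally, I would produce $\tilde{x}$ and check the power-series description. The collapse and convergence of the spectral sequence imply that the edge map $E_n^* K(\Z, n+1) \to K_n^* K(\Z, n+1)$ is surjective; choose any lift $\tilde{x} \in E_n^{2(p^n-1)/(p-1)} K(\Z, n+1)$ of $x$. Because the powers $\tilde{x}^k$ have unboundedly growing degree and $E_n^*$ of a CW complex is complete with respect to the skeletal filtration, the assignment $\tilde{x} \mapsto \tilde{x}$ extends uniquely to a continuous ${E_n}_*$-algebra map ${E_n}_*[[\tilde{x}]] \to E_n^* K(\Z, n+1)$. Comparing $\m$-adic associated gradeds, both sides recover the $E_\infty$-page ${K_n}_*[[x]] \otimes \gr {E_n}_*$, so this map is an isomorphism of completed rings. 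The main obstacle is this last step: extracting a genuine multiplicative power-series isomorphism from the additive collapse requires both the completeness needed to define the map out of ${E_n}_*[[\tilde{x}]]$ and the absence of hidden multiplicative extensions, and it is once again the evenness of all degrees in sight that precludes any extensions between $E_\infty$-filtration strata.
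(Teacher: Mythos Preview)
Your proposal is correct and follows essentially the same approach as the paper: identify the $E_1$-page via base change along the finite free extension $K(n) \to K_n$, observe that everything lives in even total degree so the Bockstein spectral sequence collapses, and read off a power-series lift $\tilde{x}$ of $x$. The paper's argument is terser and does not spell out the final step of promoting the collapsed $E_\infty$-page to a multiplicative isomorphism ${E_n}_*[[\tilde{x}]] \cong E_n^* K(\Z, n+1)$; your more careful treatment of that point (using completeness and the absence of multiplicative extensions by evenness) is a welcome elaboration rather than a different method.
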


We note that this ring is flat over ${E_n}_*$ and so we conclude, similarly, that
$$E_n^*(K(\Z, n+1) \times K(\Z, n+1))\cong {E_n}_*[[\ty, \tz]].$$
The multiplication map $m: K(\Z, n+1) \times K(\Z, n+1) \to K(\Z, n+1)$ therefore makes ${E_n}_*[[\tx]]$ into a formal group over ${E_n}_*$ with formal group law $F_n(\ty, \tz) := m^*(\tx)$.  This lifts a corresponding formal group law on ${K_n}_*[[x]]$.

\begin{lem} \label{def_lem}

The formal group $\Spf(K_{n *}[[x]])$ is isomorphic to the multiplicative group $\mathbb{G}_m$ over $K_{n *}$.  

\end{lem}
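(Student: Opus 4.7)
The plan is to compute the formal group law $F(x_1, x_2) := m^*(x) \in K(n)_*[[x_1, x_2]]$ explicitly, by combining Ravenel-Wilson's description of the Hopf algebra $K(n)_* K(\Z, n+1)$ with the perfect duality between this homology and $K(n)^* K(\Z, n+1) = K(n)_*[[x]]$.

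First I would fix the duality. The $K(n)_*$-basis $\{b_I := \prod_{k \in I} b_k : I \subseteq \N \text{ finite}\}$ of $K(n)_* K(\Z, n+1) = \bigotimes_{k \geq 0} R(b_k)$ is dual to the monomial basis $\{x^i\}_{i \geq 0}$ of $K(n)_*[[x]]$ under the binary correspondence $i \leftrightarrow I$ given by $i = \sum_{k \in I} 2^k$. In particular $\langle x, b_J \rangle = \delta_{J, \{0\}}$.

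Next I would compute the Pontryagin product factor-by-factor in $\bigotimes_k R(b_k)$. Because the different $R(b_k)$'s are independent tensor factors and $b_k^2 = v_n^{2^k} b_k$ inside each, for any finite $I, J \subseteq \N$,
\[
b_I \cdot b_J = v_n^{\nu(I, J)} \, b_{I \cup J}, \qquad \nu(I, J) := \sum_{k \in I \cap J} 2^k.
\]

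Finally I would read off the FGL coefficients. Since $m^*$ is dual to the Pontryagin product $m_*$, the coefficient of $x_1^i x_2^j$ in $F$ is
\[
f_{i, j} = \langle x, b_I \cdot b_J \rangle = v_n^{\nu(I, J)} \, \delta_{I \cup J, \{0\}},
\]
with $I, J$ the binary supports of $i, j$. The delta forces $I, J \subseteq \{0\}$, so only $f_{1, 0} = f_{0, 1} = 1$ and $f_{1, 1} = v_n$ survive, giving the polynomial
\[
F(x_1, x_2) = x_1 + x_2 + v_n \, x_1 x_2.
\]
Via the change of coordinate $\tilde x := v_n x$ this becomes $\tilde x_1 + \tilde x_2 + \tilde x_1 \tilde x_2$, the standard multiplicative group law, so $\Spf(K(n)_*[[x]]) \cong \Gm$ over $K(n)_*$. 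The main task is really unpacking the Ravenel-Wilson Hopf structure; once the duality $\langle x, b_J \rangle = \delta_{J, \{0\}}$ and the relation $b_k^2 = v_n^{2^k} b_k$ are in place, no substantive obstacle remains.
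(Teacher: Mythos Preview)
Your proof is correct and takes a genuinely different route from the paper's.  The paper does not compute the coproduct $m^*(x)$ directly; instead it invokes Ravenel--Wilson's computation of the Verschiebung $V(x)=x$ to obtain the $2$-series $[2](x)=FV(x)=x^2$, and then argues that a $2$-typical formal group law over $K(n)_*$ with this $2$-series is classified by the map $BP_*\to K(n)_*$ sending $v_1\mapsto 1$ and $v_i\mapsto 0$ for $i>1$, hence is the multiplicative group.  Your approach, by contrast, reads the entire formal group law off from the Pontryagin product via Hopf-algebra duality and produces $F(x_1,x_2)=x_1+x_2+v_n x_1 x_2$ in closed form.  This is more elementary and in fact strictly stronger: it is exactly the explicit formula that the subsequent lemma (that $1+\tx$ is grouplike in $E_n^*K(\Z,n+1)$) is really after.

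One small point deserves a line of justification.  You assert that $\{b_I\}$ is \emph{the} dual basis to $\{x^i\}$ under the binary correspondence, but Ravenel--Wilson only record this for singletons ($b_k$ dual to $x^{2^k}$).  The general statement follows because the binary expansion is unique, so each graded piece of $K(n)_*K(\Z,n+1)$ is free of rank one over $K(n)_*$; thus $b_I$ and the element dual to $x^i$ differ by a degree-zero unit of $K(n)_*$, i.e.\ by $1\in\F_2$.  (Alternatively, note that your argument only ever pairs $x$ against $b_{I\cup J}$, and the vanishing for $I\cup J\neq\{0\}$ together with $\langle x,b_0\rangle=1$ already suffices for the four coefficients you need.)
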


\begin{proof}

Ravenel-Wilson show that the Verschiebung on $K(n)^* K(\Z, n+1)$ satisfies $V(x) =  (-1)^{n+1} x$, so the same holds over $K_n^* K(\Z, n+1)$.  Again using the primitive $2p-2^{\rm nd}$ root of unity $\xi \in k$ when $n$ is even, we may define a new coordinate $z$ as $z=x$ when $n$ is odd, and $z = \xi x$ when $n$ is even.

If $n$ is odd, evidently $V(z) = z$.  If $n$ is even, the same holds: $V(z) =\xi^p V(x) = - \xi^p x = z$.  Hence the formal group law $G$ on $K_n^* K(\Z, n+1)$ satisfies $[p](z) = FV(z) = z^p$, and so is of height 1.  Now, this $G$ is classified by a map $\theta: V \to K(n)_*$, where $V=\Z_{(p)}[v_1, v_2, \dots] \cong BP_*$ supports the universal $p$-typical formal group law.  Then the equation
$$z^p = [p](z) = \sum_i^G \theta(v_i) z^{p^i}$$
(see, e.g., (A2.2.4) in \cite{ravenel}) can be solved to see that $\theta(v_i) = 0$, for $i>1$.  This immediately implies that $G = \mathbb{G}_m$.

\end{proof}

See also the lovely \cite{peterson} for another approach to this sort of result.  Now, a power series $f \in {E_n}_*[[\tx]]$ will give rise to an ${E_n}_*$-algebra map ${E_n}_* K(\Z, n+1) \to {E_n}_*$ precisely when $f$ is \emph{grouplike}; that is, when
$$m^*(f) = f \otimes f.$$
This is because $f(ab) = \sum f_i'(a) f_i''(b)$, where $m^*(f) = \sum f_i' \otimes f_i''$. 

\begin{lem}

The element $\alpha = 1+\tx \in E_n^*K(\Z, n+1)$ is grouplike.  That is, 
$$m^*(\alpha) = \alpha \otimes \alpha = 1 + \ty + \tz + \ty~\tz.$$

\end{lem}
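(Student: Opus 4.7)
The plan is to recast the grouplike condition in terms of the formal group law on $\Spf E_n^* K(\Z, n+1)$ coming from the H-space structure on $K(\Z, n+1)$, and then to invoke height-one deformation theory in order to identify that formal group with $\Gm$.

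First I would observe that, by Proposition \ref{bock_prop}, the multiplication $m$ endows $\Spf E_n^* K(\Z, n+1) = \Spf {E_n}_*[[\tx]]$ with the structure of a formal group over $\Spf {E_n}_*$, governed by the formal group law $F_n(\ty, \tz) := m^*(\tx)$. The asserted identity $m^*(\alpha) = \alpha \otimes \alpha$ for $\alpha = 1 + \tx$ is then equivalent, after subtracting $1$, to $F_n(\ty, \tz) = \ty + \tz + \ty\tz$. In other words, the lemma reduces to the claim that, for a suitable choice of uniformizer $\tx$ lifting $x$, the formal group law $F_n$ is the multiplicative one.

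By Lemma \ref{def_lem}, the mod-$\m$ reduction of $F_n$ is $\Gm$ over $K_n$, so $F_n$ is a deformation of the height-one formal group $\Gm$ to the complete local ring ${E_n}_* = \W(k)[[u_1, \dots, u_{n-1}]][u^{\pm 1}]$. At height one the Lubin-Tate theorem identifies $\Gm$ over $\W(k)$ as its own universal deformation; since ${E_n}_*$ is a $\W(k)$-algebra whose classifying map is (by construction of $E_n$) the canonical inclusion, this produces a unique starred isomorphism $\phi: F_n \xrightarrow{\sim} \Gm$ of formal groups over ${E_n}_*$.

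To finish I would change coordinate: set $\tx' := \phi^*(t)$, where $t$ is the standard coordinate on $\Gm$ with group law $s + t + st$. Because $\phi$ is starred, $\tx' \equiv x \pmod{\m}$, so $\tx'$ remains a valid uniformizer satisfying the conclusion of Proposition \ref{bock_prop}, and with respect to it $F_n(\ty', \tz') = \ty' + \tz' + \ty'\tz'$; the grouplike identity for $\alpha := 1 + \tx'$ then follows at once. The main obstacle is setting up the height-one deformation statement cleanly, namely verifying that the classifying map of $F_n$ as a deformation of $\Gm$ is indeed the canonical $\W(k) \hookrightarrow {E_n}_*$ so that the universal deformation pulls back to the standard $\Gm$ over ${E_n}_*$; once that is arranged, the rest of the argument is a formal change of coordinates.
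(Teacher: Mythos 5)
Your argument is the same deformation-theoretic one the paper uses, and it works, but the ``main obstacle'' you flag at the end is not actually an obstacle, and the step where you assert that the classifying map is the canonical inclusion ``by construction of $E_n$'' is both unjustified and unnecessary.  It is unjustified because the formal group entering the construction of $E_n$ is the height-$n$ group on $E_n^*\C P^\infty$, not the height-one group $F_n$ on $E_n^*K(\Z, n+1)$, so there is no reason a priori for $F_n$'s classifying map $g: \W(k) \to {E_n}_*$ to be the standard inclusion.  It is unnecessary because the universal deformation of $\Gm$ at height one is $\Gm$ itself over $\W(k)$, whose group law $s+t+st$ already has integer coefficients; hence $g^*\Gm = \Gm$ over ${E_n}_*$ for \emph{any} ring map $g$, and the Lubin--Tate theorem still hands you a $\star$-isomorphism $F_n \cong \Gm$.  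Once you drop the needless identification of $g$ and simply take $\tx' := \phi^*(t)$ along that $\star$-isomorphism (which, as you correctly note, reduces to $x$ mod $\m$), you have exactly the paper's argument, which phrases the same thing as choosing $\tx = f^*(t)$ for $f$ the classifying map of the deformation $F_n$ of $\Gm/\F_2$.
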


\begin{proof}

The reduction $A(k, \Gamma) \to k$, along with Lemma \ref{def_lem} exhibit $\Spf({E_n}_*[[\tx]])$ as a deformation of $\mathbb{G}_m$.  Thus it is pulled back from the universal deformation $\Spf(\W(k)[[t]])$, which is $\mathbb{G}_m$ over $\W(k)$.  That is, there is a ring homomorphism $f: \W(k) \to {E_n}_*$ so that $F_n = f^*F$, where $F$ is the universal deformation of $\mathbb{G}_m$ over $\W(k)$.  We may take $ \tx$ to be $f^*(t)$.  Then, since the desired equation\footnote{This is the very definition of the multiplicative group!} holds for $1+t$ in $\W(k)[[t]]$, it also holds in ${E_n}_*[[\tx]]$.

\end{proof}

We therefore obtain an ${E_n}_*$-algebra homomorphism $\alpha: {E_n}_*K(\Z, n+1) \to {E_n}_*$.

\subsection{Obstruction theory}

As in the $A_\infty$ case, there is an adjunction \cite{abghr}
$$\Map_{\rm spectra}(\Sigma^{n+1} H\Z, gl_1 {E_n}) \simeq \Map_{E_\infty}(\Sigma^\infty K(\Z, n+1)_+, {E_n}),$$
and the homotopy of the latter is computable through Andr\'e-Quillen cohomology. We summarize the relevant techniques of Goerss-Hopkins \cite{gh} in the following

\begin{thm}{[Goerss-Hopkins]} \label{gh_thm}
Let $X$, $Y$, and $E$ be $E_\infty$-ring spectra, where $Y$ is an $E$-algebra, and $E_*X$ is flat over $E_*$.  Assume that $E_*$ is a complete local ring with maximal ideal $\m$ and residue field $k := E_* / \m$ of characteristic $p$.  Then, if the cotangent complex $L_f$ for the map
$$f: k \to k \otimes_{E_*} E_*(X)$$
is contractible ($L_f \simeq 0$), the Hurewicz map
$$\Map_{E_\infty}(X, Y) \to \Hom_{\textit{$E_{*}$-alg}}(E_*(X), Y_*)$$
is a homotopy equivalence.

\end{thm}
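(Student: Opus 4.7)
The plan is to invoke the Goerss-Hopkins obstruction theory \cite{gh,gh2}, in which $\Map_{E_\infty}(X,Y)$ is analyzed via a tower of moduli spaces whose successive layers are controlled by topological Andr\'e-Quillen cohomology groups $D^s_{E_*-alg}(E_*X;\, \Omega^t Y_*)$. Concretely, obstructions to realizing a given $E_*$-algebra map $E_*X \to Y_*$ by an $E_\infty$-map through the $s$th stage of the tower lie in this group with $t=s-1$, while obstructions to the uniqueness of such realizations lie in the analogous group with $t=s-2$; both require $s \geq 2$. If every such obstruction group vanishes, the tower degenerates and the Hurewicz map is a weak equivalence with contractible components over each element of $\Hom_{E_*-alg}(E_*X, Y_*)$. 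The entire proof thus reduces to establishing this vanishing from the hypothesis $L_f \simeq 0$.

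The first step is flat base change for the cotangent complex: since $E_*X$ is flat over $E_*$, the derived base change of $L_{E_*X/E_*}$ along $E_* \to k$ is exactly $L_f$. Consequently, for any $k$-module $N$,
\begin{equation*}
D^s_{E_*-alg}(E_*X;\, N) \;\cong\; D^s_{k-alg}(k \otimes_{E_*} E_*X;\, N) \;=\; \pi_{-s} R\Hom_k(L_f, N),
\end{equation*}
which vanishes for all $s$ under the assumption $L_f \simeq 0$. The second step propagates this vanishing to arbitrary $E_*$-module coefficients. Given an $E_*$-module $M$, filter it by powers of the maximal ideal $\m$; each subquotient $\m^r M / \m^{r+1} M$ is a $k$-module, and the long exact sequences associated to $0 \to \m^{r+1} M \to \m^r M \to \m^r M / \m^{r+1} M \to 0$ assemble into a spectral sequence
\begin{equation*}
D^s_{E_*-alg}(E_*X;\, \gr_\m M) \;\Longrightarrow\; D^s_{E_*-alg}(E_*X;\, M),
\end{equation*}
whose $E_1$-page vanishes by the first step, forcing the abutment to vanish as well.

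Applied with $M = \Omega^t Y_*$ for each $t$, all Goerss-Hopkins obstruction groups vanish, the tower collapses, and the Hurewicz map is the desired homotopy equivalence. The principal technical obstacle is the convergence of the $\m$-adic spectral sequence, which requires the coefficient module $Y_*$ to be sufficiently $\m$-adically complete (so that the relevant $\lim^1$ terms over the filtration $\{Y_*/\m^r Y_*\}$ vanish). This is precisely where the completeness hypothesis on $E_*$ is used essentially; in our intended application with $Y = E_n$, this completeness is supplied by the construction of the Lubin-Tate spectrum as an $\m$-complete object, and more generally by a $K(n)$-local context.
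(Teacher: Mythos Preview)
Your overall architecture is correct and matches the paper's: invoke the Goerss--Hopkins obstruction tower, reduce to vanishing of the relevant Andr\'e--Quillen groups, and use an $\m$-adic filtration spectral sequence to pass from $k$-module coefficients to general $Y_*$-coefficients. The paper does exactly this, citing Theorem~4.5 and Proposition~6.8 of \cite{gh}.

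There is, however, a genuine gap in your first step. The obstruction groups in the Goerss--Hopkins theory are \emph{operadic} Andr\'e--Quillen cohomology groups $D^s_{E_*T}(-,-)$, where $T$ is a simplicial $E_\infty$ operad; after the $\m$-adic reduction these become $D^s_{\EE_k}(\Gamma, -)$ for $\Gamma = k \otimes_{E_*} E_*X$ and $\EE_k$ an $E_\infty$ operad over $k$. Your displayed identification
\[
D^s_{k\text{-alg}}(k \otimes_{E_*} E_*X;\, N) \;=\; \pi_{-s}\, R\Hom_k(L_f, N)
\]
is the statement for \emph{classical} (commutative) Andr\'e--Quillen cohomology. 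In characteristic~$p$ the $E_\infty$ operad over $k$ is not equivalent to the commutative operad---there are Dyer--Lashof operations---so $D^s_{\EE_k}(\Gamma, N)$ is not a priori $R\Hom_k(L_f, N)$. Your argument conflates the two.

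The paper closes this gap by invoking two further spectral sequences (Propositions~6.4 and~6.5 of \cite{gh}):
\[
D^p_{\RR}(\Gamma, M)^q \Longrightarrow D^{p+q}_{\EE_k}(\Gamma, M), \qquad
\Ext^p_{\UU(\Gamma)}(D_q(\Gamma), M) \Longrightarrow D^{p+q}_{\RR}(\Gamma, M),
\]
where $\RR$ is the Dyer--Lashof algebra and $\UU(\Gamma)$ the category of unstable $\RR$-modules over $\Gamma$. The innermost input is $D_*(\Gamma) = \pi_*(L_f)$, the classical Andr\'e--Quillen homology. Thus $L_f \simeq 0$ forces $D_*(\Gamma) = 0$, which kills the $E_1$-term of the second spectral sequence, hence that of the first, hence $D^s_{\EE_k}(\Gamma, N) = 0$. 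Your argument needs this bridge; without it, the implication from $L_f \simeq 0$ to vanishing of the actual obstruction groups is unjustified in positive characteristic.
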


\begin{proof}

This is assembled from various arguments in sections 4, 6, and 7 of \cite{gh}.  Applying Theorem 4.5, we see that the obstructions to the Hurewicz map being a homotopy equivalence lie in the generalized Andr\'{e}-Quillen cohomology groups $D^s_{E_*T}(E_*X, \Omega^t Y_*)$, where $t>0$, and $T$ is a simplicial $E_\infty$ operad.  Since $E_*$ is a complete local ring, the assumptions of Proposition 6.8 hold to give a spectral sequence
\beqn \label{main_ss_eqn} E_1^{p, q} = D^p_{\EE_k}(k \otimes_{E_*} E_*(X), \Omega^t (\m^q Y_* / \m^{q+1} Y_*)) \implies D^p_{E_*T}(E_*X, \Omega^t Y_*), \eeqn
where $\EE_k$ is an $E_\infty$ operad over $k$.

We aim to show that the abutment of this spectral sequence vanishes, so it suffices to show that the $E_1$-term does.  That $E_1$-term is computable through sequential applications of the spectral sequences of Propositions 6.4 and 6.5:
\beqn \label{ss_eqn} \begin{array}{ccc}
D^p_{\RR}(\Gamma, M)^q \implies D^{p+q}_{\EE_k}(\Gamma, M), & {\rm and} & \Ext^p_{\UU(\Gamma)}(D_q(\Gamma), M) \implies D^{p+q}_{\RR}(\Gamma, M).
\end{array} \eeqn
Here $\RR$ is the Dyer-Lashof algebra, and $D_\RR$ the derived functor of derivations of an unstable algebra over $\RR$.  Similarly, $\UU(\Gamma)$ is the category of unstable modules over an unstable $\RR$-algebra $\Gamma$.  We will apply this in the case $\Gamma = k \otimes_{E_*} E_*(X)$.  

Then the $E_1$-term of \ref{main_ss_eqn} vanishes if
$$D_*(k \otimes_{E_*} E_*(X)) = \pi_*(L_f)$$
(the input to the second spectral sequence in \eqref{ss_eqn}) vanishes.

\end{proof}

We will apply this theorem to the following setting: $E$ is $E_n$, $X = \Sigma^\infty K(\Z, n+1)_+$, and $Y = E_n$.  Then $k \otimes_{{E_n}_*} {E_n}_*(X) = {K_n}_*(X)$; this is isomorphic to the ring 
\beqn \label{reduction_eqn}
K_{n *}(X) \cong \bigotimes_{j\geq 0} R^+_{K_n}(c_j)  = k[c_0, c_1, \dots][ u^{{\pm 1}}] / (c_j^p - u^{p^j(p^n-1)} c_j),
\eeqn

\begin{prop} \label{cotangent_prop}

The cotangent complex for $k \to k \otimes_{{E_n}_*} {E_n}_*K(\Z, n+1)$ is contractible.  Thus the Hurewicz map
$$\Map_{E_\infty}(\Sigma^\infty K(\Z, n+1)_+, E_n) \to \Hom_{\text{${E_n}_*$-alg}}({E_n}_* K(\Z, n+1), {E_n}_*)$$
is a homotopy equivalence.

\end{prop}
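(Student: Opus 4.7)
The plan is to apply Theorem \ref{gh_thm} with $E = Y = E_n$ and $X = \Sigma^\infty K(\Z, n+1)_+$; the required flatness of ${E_n}_*(X)$ over ${E_n}_*$ is delivered by Proposition \ref{bock_prop}. Everything then reduces to exhibiting the vanishing of the cotangent complex $L_f$ of
$$f : k \longrightarrow \Gamma := k \otimes_{{E_n}_*} {E_n}_* K(\Z, n+1) \cong {K_n}_* K(\Z, n+1),$$
which, via the Goerss--Hopkins spectral sequences appearing in the proof of Theorem \ref{gh_thm}, amounts to the vanishing of the derived module of unstable indecomposables $D_*(\Gamma)$.

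My first step will be to unpack the algebraic structure of $\Gamma$ using \eqref{reduction_eqn}: $\Gamma$ arises from an infinite tensor product of the factor rings $R(b_j) = K(n)_*[b_j]/(b_j^2 - v_n^{2^j} b_j)$, base-changed up to $k$ and then adjoined $u$ with $u^{2^n - 1} = v_n$. In each factor the defining polynomial factors as $b_j(b_j - v_n^{2^j})$, and since $v_n^{2^j}$ is a unit, the Chinese Remainder Theorem supplies a splitting $R(b_j) \cong K(n)_* \times K(n)_*$, exhibiting each factor as a classically \'{e}tale extension of the base. Adjoining $u$ is likewise \'{e}tale, since $2^n - 1$ is a unit in characteristic $2$. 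Thus $\Gamma$ presents itself as an ind-\'{e}tale $k$-algebra: a filtered colimit of finite tensor products of finite \'{e}tale $k$-algebras.

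Next I would invoke three standard properties of the Goerss--Hopkins cotangent complex: additivity under $E_\infty$-coproducts (i.e.\ tensor products over $k$), compatibility with filtered colimits, and vanishing on classically \'{e}tale extensions of ordinary commutative rings. Chaining these together shows that $L_f$ vanishes factor by factor, hence for any finite tensor product, and finally for the ind-\'{e}tale algebra $\Gamma$ itself. Combined with Theorem \ref{gh_thm}, this produces the claimed homotopy equivalence, whose set of path components recovers the bijection stated in the proposition.

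The subtlest ingredient is the third property above: the verification that the operadically defined $D_*$, which is sensitive to the full unstable Dyer--Lashof action on $k$-algebras, vanishes on \'{e}tale extensions of ordinary commutative rings rather than merely the algebraic cotangent complex. The cleanest route is to appeal to the rigidity of $E_\infty$-structures on \'{e}tale covers of discrete rings, which forces the derivations entering the second spectral sequence of \eqref{ss_eqn} to be trivial. Alternatively, one can exploit the explicit splitting $R(b_j) \otimes_{K(n)_*} k \cong k \times k$ to compute $D_*$ factor by factor on this purely separable algebra and conclude directly.
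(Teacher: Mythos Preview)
Your argument is correct, but the paper takes a different and shorter route. Instead of decomposing $\Gamma$ as an ind-\'etale extension and then invoking vanishing of the cotangent complex on \'etale maps, the paper appeals to a criterion of Rezk (Corollary~21.3 of \cite{rezk_notes}): if the Frobenius endomorphisms of $A$ and $B$ are both isomorphisms, then $L_{B/A}\simeq 0$. For $A=k$ this is the hypothesis that $k$ is perfect; for $B=\Gamma$ it is a one-line check from the relation $b_j^2=v_n^{2^j}b_j$ in \eqref{reduction_eqn}, which exhibits each $b_j$ as the square of $v_n^{-2^{j-1}}b_j$.

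The two arguments are closely linked: an ind-\'etale extension of a perfect $\F_2$-algebra is again perfect, so your decomposition feeds directly into Rezk's criterion, and this is arguably the cleanest way to dispose of the ``subtle ingredient'' you flag about the operadic versus the classical cotangent complex. Conversely, your Chinese Remainder splitting makes the perfectness of $\Gamma$ transparent. The paper's route is more economical; yours makes the \'etale structure of $\Gamma$ explicit, which is pleasant in its own right. One small correction: the base of $f$ in Theorem~\ref{gh_thm} is the graded residue field ${K_n}_*=k[u^{\pm 1}]$, which already contains $u$, so the separate step of adjoining $u$ is unnecessary---over ${K_n}_*$ each tensor factor is already ${K_n}_*\times{K_n}_*$, and your factor-by-factor computation of $D_*$ goes through directly.
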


\begin{proof}

A nice criterion for the existence of a nullhomotopy of the cotangent complex $L_{B/A}$ is given in Corollary 21.3 of Rezk's notes \cite{rezk_notes}.  Specifically, if the Frobenius maps $\sigma_A: A \to A$ and $\sigma_B: B \to B$ are isomorphisms, then $L_{B/A}$ is contractible.  

Applying this to $A = k$ is immediate, since $k$ is perfect.  For $B = k \otimes_{{E_n}_*} {E_n}_*(X)$, this is a short computation from equation (\ref{reduction_eqn}), using the fact that $c_j^p = u^{p^j (p^n-1)} c_j$.  So the cotangent complex $L_{B/A}$ is contractible.  

\end{proof}

Putting Theorem \ref{gh_thm} and Proposition \ref{cotangent_prop} together, we see that there exists a (unique up to homotopy) $E_\infty$-map $\Phi_n: \Sigma^\infty K(\Z, n+1)_+ \to {E_n}$ which induces $\alpha$ in ${E_n}_*$.  Further, its $p$-adic powers produce a copy of $\Z_p$ inside 
$$[H\Z, gl_1 E_n] \cong \Hom_{\text{${E_n}_*$-alg}}({E_n}_* K(\Z, n+1), {E_n}_*).$$
Now, $\Hom_{\text{${E_n}_*$-alg}}({E_n}_* K(\Z, n+1), {E_n}_*)$ is the Cartier dual of the formal group $\Spf E_n^* K(\Z, n+1)$.  We have seen that this is in fact the multiplicative formal group over ${E_n}_*$, so its Cartier dual is in fact isomorphic to $\Z_p$, yielding Theorem \ref{e(n)_thm}.  Furthermore, reduction from ${E_n}_*$ to ${K_n}_*$ gives an isomorphism from this group to 
$$\Hom_{\text{${K_n}_*$-alg}}({K_n}_* K(\Z, n+1), {K_n}_*),$$
which we have already seen is isomorphic to $\Z_p$.  This gives Proposition \ref{nontriv_prop}.

\begin{defn}

Let $\varphi_n: \Sigma^{n+1} H\Z \to gl_1 {E_n}$ be the map of spectra adjoint to $\Phi_n$.

\end{defn}

We define twisted $E_n$-theory in the same fashion as for $K(n)$.  We first note that there are no nontrivial twistings of ${E_n}$ by $K(\Z, m)$ for $m>n+2$ by Theorem \ref{vanish_thm}.  In contrast for $m=n+2$, the map $\varphi_n$ is essential, since its adjoint induces the nonzero map $\alpha$ in homotopy.  The infinite loop map of its suspension is of the form
$$B (\Omega^\infty \varphi_n): K(\Z, n+2) \to BGL_1 {E_n}.$$
This allows us to define, for every $H \in H^{n+2}(X; \Z)$, the twisted Morava E-theory ${E_n}^*(X; H)$, as in section \ref{sec twistings}.  Its reduction mod $\m$ is the twisted Morava K-theory of Definition \ref{univ_defn}.  We will explore this theory in greater detail in future work.  For instance, it will satisfy analogues of the properties indicated in Theorem \ref{thm prop}.  In the next section, we will consider a truncation that lies between ${E_n}^*(X; H)$ and $K(n)^*(X; H)$.

\begin{rem}

Exactly the same arguments go through to produce a twisting of the completed Johnson-Wilson spectrum $\widehat{E(n)}$ by $H^{n+2}$ when $p=2$.  For odd primes, there is again the matter of the sparsity in the homotopy of $\widehat{E(n)}$ which can be resolved by shortening its period.  Further, when $n$ is even, one must adjoin the root of unity $\xi$ to $\widehat{E(n)}$.  This construction will be used in the next section.

\end{rem}

\section{Twisted integral Morava K-theory}
\label{sec int}

For each prime $p$, there is an integral lift of Morava K-theory which is a spectrum $\tK(n)$ with
$$\pi_*(\tK(n)) = \Z_p[v_n, v_n^{-1}].$$
This is constructed as a quotient of $\widehat{E(n)}$ by the ideal $(v_1, \dots, v_{n-1})$; see, e.g., \cite{kriz, KS1}.

This theory more closely resembles complex K-theory than was the case for the mod $p$ versions (for $n=1$, it is the $p$-completion of K-theory). Therefore, the integral version might be more important for applications
 (indeed, see section \ref{sec-phys}).

\subsection{Properties and twists}

The quotient by $(v_1, \dots, v_{n-1})$ gives a natural transformation $\pi: \widehat{E(n)} \to \tK(n)$.  As an $A_\infty$-map, this gives a well-defined essential twisting
$$\xymatrix@1{K(\Z, n+2) \ar[rr]^-{B (\Omega^\infty \varphi_n)} & & BGL_1 \widehat{E(n)} \ar[rr]^-{BGL_1(\pi)} & & BGL_1 \tK(n)}.$$
In order for the first map to exist, we must assume that $p=2$, or $n$ is odd; if not, we must use instead the quadratic extensions of $\widehat{E(n)}$ and $\tK(n)$ defined by adjoining $\xi$ to their homotopy.  While this twisting exists in these more general cases, we will take $p=2$ for the rest of this paper.

We will continue to refer to the map defined by the above diagram as the universal twisting. It defines a twisted cohomology theory $\tK(n)^*(X; H)$, associated to pairs $(X, H)$, where $X$ is a topological space and $H \in H^{n+2}(X, \Z)$ is a cohomology class.

\begin{rem}
Twisted integral Morava K-theory satisfies properties analogous to those of the mod $2$ version.
In particular, the integral version satisfies the same properties in Theorem \ref{thm prop}.
\end{rem}

The following is almost immediate from Theorem \ref{ahss_thm}:

\begin{thm} \label{int2_thm}

There is a twisted Atiyah-Hirzebruch spectral sequence converging to $\tK(n)^*(X; H)$ with $E_2^{p, q} = H^p(X, \tK(n)^q)$.  The first 
possible nontrivial differential is $d_{2^{n+1}-1}$; this is given by
$$d_{2^{n+1}-1}(xv_n^k) = (\tQ_n(x) + (-1)^{|x|}  x\cup (\tQ_{n-1} \cdots \tQ_1(H)))v_n^{k-1}.$$

\end{thm}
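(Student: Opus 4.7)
The plan is to mirror the argument of Theorem~\ref{ahss_thm}, lifting each step from $K(n)$ to the integral theory $\tK(n)$. First, since the twisting map $K(\Z,n+2)\to BGL_1 \tK(n)$ is obtained from the $E_n$-twisting by passing through $\widehat{E(n)}\to \widehat{E(n)}/(v_1,\ldots,v_{n-1})=\tK(n)$, the theory $\tK(n)^*(-;H)$ arises from an $A_\infty$-twisting of a ring spectrum and satisfies the axioms of Definition~\ref{gen_defn}. The cellular filtration of $X$ therefore produces the tAH spectral sequence with $E_2^{p,q}=H^p(X;\tK(n)^q)$ exactly as in \cite{as1,as2}. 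Since $\tK(n)^*=\Z_2[v_n,v_n^{-1}]$ is concentrated in degrees divisible by $|v_n|=2^{n+1}-2$, a differential $d_r$ is forced to vanish unless $r\equiv 1 \pmod{2^{n+1}-2}$, so the first potentially nontrivial differential is $d_{2^{n+1}-1}$.

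Next, I would prove integral analogs of Lemmas~\ref{tah_lem} and \ref{diff_lem}. Combining the module structure of tAH over the untwisted AHSS, the known formula for the first untwisted differential (given by the integral Milnor primitive $\tQ_n$), and naturality in the twist $H$, one obtains
\[
d_{2^{n+1}-1}(xv_n^k) \;=\; \bigl(\tQ_n(x) + (-1)^{|x|}\, x\cup \tilde\phi_n(H)\bigr)v_n^{k-1}
\]
for a universal natural operation $\tilde\phi_n\colon H^{n+2}(-;\Z)\to H^{2^{n+1}-1}(-;\Z)$. To identify $\tilde\phi_n$, I would adapt the Atiyah-Hirzebruch-Serre computation of Lemma~\ref{diff_lem}: the Bockstein spectral sequence of Proposition~\ref{bock_prop} applied to $\tK(n)$ collapses by even-degree concentration, yielding $\tK(n)^*K(\Z,n+1)=\tK(n)_*[[\tx]]$ with $\tx$ reducing mod~$2$ to $x$. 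A transgression calculation in the integral AHS for the principal fibration $K(\Z,n+1)\to P_H\to X$, followed by specialization to the universal case $X=K(\Z,n+2)$ as in Lemma~\ref{diff_lem}, then identifies $\tx$ with $\tQ_{n-1}\cdots \tQ_1(\iota_{n+1})$ modulo $(\tx)$. By naturality under $H\colon X\to K(\Z,n+2)$, this gives $\tilde\phi_n(H)=\tQ_{n-1}\cdots\tQ_1(H)$.

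The main obstacle is the integral version of Lemma~\ref{rep_lem}, namely pinning down the integral cohomology representative of the universal class $\tx \in \tK(n)^*K(\Z,n+1)$. This requires lifting Ravenel-Wilson's mod~$2$ Hopf-ring construction of $b_0$ to the integral setting, which I would handle by comparing $\tK(n)$ with an integral connective cover whose zeroth Postnikov section is $H\Z_2$, in parallel with the $k(n)\to H\F_2$ comparison used in Lemma~\ref{diff_lem}. Mod~$2$ reduction pins down the representative modulo~$2$ via Lemma~\ref{rep_lem}; in the relevant bidegree $H^*(K(\Z,n+1);\Z)$ is free of $2$-torsion, so the integrality of each $\tQ_i$ together with naturality of the transgression with respect to integral Steenrod operations promotes the mod~$2$ identification to the integral formula. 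With this established, the remainder of the proof of Theorem~\ref{ahss_thm} adapts verbatim.
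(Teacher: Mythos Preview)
Your overall strategy---rerun the proof of Theorem~\ref{ahss_thm} with integral lifts everywhere---is exactly what the paper intends: the paper offers no proof beyond the remark ``almost immediate from Theorem~\ref{ahss_thm}'', and your outline fleshes out precisely that. The construction of the spectral sequence, the identification of the first possibly nonzero differential from the sparsity of $\tK(n)_*$, and the reduction (via the module structure over the untwisted AHSS and naturality in $H$) to computing a single universal operation $\tilde\phi_n$ are all correct and parallel the mod~$2$ argument.

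There is, however, a concrete error in your handling of the integral analogue of Lemma~\ref{rep_lem}. You assert that ``in the relevant bidegree $H^*(K(\Z,n+1);\Z)$ is free of $2$-torsion'', and use this to promote the mod~$2$ identification of the representative of $\tx$ to an integral one. This is false already for $n=2$: the relevant degree is $2^{n+1}-2=6$, and $H^6(K(\Z,3);\Z)_{(2)}$ contains the $2$-torsion class $\beta Sq^2\iota_3=\tQ_1(\iota_3)$---indeed, the very class you are trying to pin down is $2$-torsion, since each $\tQ_i$ factors through a Bockstein. So mod~$2$ reduction cannot by itself determine the integral representative, and your lifting argument as written does not go through.

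The fix is already implicit in the other half of your paragraph: rather than lifting from mod~$2$, run the comparison of Lemma~\ref{diff_lem} directly in the integral setting, using the zig-zag $\tK(n)\leftarrow \tilde{k}(n)\to H\Z_2$ through an integral connective cover. The key point there is that in bidegree $(2^{n+1}-1,0)$ the Postnikov section $\tilde{k}(n)\to H\Z_2$ induces an isomorphism on $E_2$-terms of the AHS, so the transgression computation goes through without any appeal to torsion-freeness. Drop the torsion-freeness claim and rely solely on this comparison; the rest of your outline then carries over verbatim, in agreement with the paper's ``almost immediate''.
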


Here $\tQ_k: H^*(X; \Z) \to H^{*+2^{k+1}-1}(X; \Z)$ is an integral cohomology operation lifting the Milnor primitive $Q_k$.

\subsection{Relation with other twisted cohomology theories}
In this section we relate twisted Morava E-theory $E_n^*(X; H)$ and 
twisted integral Morava K-theory $\tK(n)^*(X; H)$
to other, perhaps more familiar, cohomology theories.
We start with the latter.

First, note that the familiar twisted K-theory, constructed by the map $K(\Z, 3) \to BGL_1 K$ may be 2-completed.  That is, 
 the completion map $K \to K_2^{\wedge}$ (where $K_2^{\wedge}$ is the 2-completion of the K-theory spectrum) gives a map $BGL_1 K \to BGL_1 K_2^{\wedge}$.  The composite allows one to define $2$-completed twisted $K$-theory, $(K_2^{\wedge})^*(X; H)$, for $H \in H^3(X; \Z)$.

It is easy to see that the twisted $\tK(1)$-homology, is precisely the same theory:

\begin{prop}

For any space $X$ and class $H \in H^3(X; \Z)$, there is a natural isomorphism
$$\tK(1)^*(X; H) \cong (K_2^{\wedge})^*(X; H).$$

\end{prop}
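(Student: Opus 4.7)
The plan is to identify both sides of the claimed isomorphism under a common description of the ambient $E_\infty$ ring spectrum and of the twisting map. First, for $n=1$ the ideal $(v_1,\dots,v_{n-1})$ used to construct $\tK(n)$ as a quotient of $\widehat{E(n)}$ is empty, so $\tK(1)=\widehat{E(1)}$ on the nose. The coefficient ring $\widehat{E(1)}_* = \Z_2[v_1^{\pm}]$ matches $(K_2^{\wedge})_*=\Z_2[u^{\pm}]$, and by the Goerss-Hopkins-Miller uniqueness theorem there is a preferred $E_\infty$ equivalence $\widehat{E(1)}\simeq K_2^{\wedge}$ (both realize the universal deformation of $\Gm$ over $\F_2$). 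I would fix this equivalence throughout.

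Under this identification both twists become $E_\infty$ maps $K(\Z,3)\to BGL_1 K_2^{\wedge}$: on the left-hand side, the one produced by Theorem \ref{e(n)_thm}, and on the right-hand side, the $2$-completion of the classical determinantal twist, whose infinite loop delooping is $K(\Z,2)=\C\P^\infty\to GL_1 K_2^{\wedge}$, $L\mapsto[L]$. By Proposition \ref{cotangent_prop} applied at $n=1$, such maps are specified by grouplike elements in $(K_2^{\wedge})^*K(\Z,2)=\Z_2[u^{\pm}][[\tx]]$. The classical twist gives the grouplike element $1+t=[L]$, where $t=[L]-1$; the twist $\varphi_1$ gives $\alpha=1+\tx$ by construction in Section \ref{e_section}. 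By Lemma \ref{def_lem} the formal group on $K_1^*K(\Z,2)$ is $\Gm$, and Proposition \ref{bock_prop} identifies its integral lift with the universal deformation, again $\Gm$ over $\Z_2$ with standard coordinate. Matching Ravenel-Wilson's dual generator $b_0$ with $v_1\cdot t$ and tracking the definition of the universal twist $y=v_n^{-1}x$ from the proof of Theorem \ref{existence_thm}, one sees that $\tx$ corresponds to $t=[L]-1$, so $\alpha=[L]$; thus both twists agree as maps $K(\Z,3)\to BGL_1 K_2^{\wedge}$.

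Once the two twists agree, Definition \ref{twist_defn} produces the same principal $GL_1 K_2^{\wedge}$-bundle $P_H\to X$ from any $H\in H^3(X;\Z)$, hence the same Thom spectrum $X^H$, hence the same twisted cohomology groups, naturally in $X$ and $H$. The main obstacle is the normalization step: verifying the correspondence $\tx\leftrightarrow[L]-1$ under the fixed equivalence $\widehat{E(1)}\simeq K_2^{\wedge}$, i.e., tracing back the definition of the class $x\in K(1)^*K(\Z,2)$ (via the Bockstein lift of the Ravenel-Wilson class dual to $b_0$) to the standard K-theory generator. Every other step---equivalence of the two spectra, reduction to grouplike elements, functoriality of the Thom spectrum construction---is essentially routine given the results already established in the paper.
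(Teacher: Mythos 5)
Your overall strategy matches the paper's: establish the spectrum equivalence, then show the determinantal twist and the universal twist coincide as maps $K(\Z,3)\to BGL_1$. Where you diverge is in \emph{how} you pin down that coincidence. The paper observes that, once the equivalence $f:K_2^\wedge\to\tK(1)$ is in place, the pushforward of the determinantal twist must a priori be some 2-adic power $u^m$ of the universal twist, and then identifies $m=1$ by comparing the primary differentials in the twisted Atiyah--Hirzebruch spectral sequences (both are $d_3=\SqZ^3+(\cdot H)$, and for $H$ of infinite order this detects $m$ exactly). Your route instead identifies both twists directly as grouplike elements of $(K_2^\wedge)^*\C P^\infty$: the classical one is $[L]=1+t$, and the $\varphi_1$ one is $\alpha=1+\tx$, and you aim to show $\tx\leftrightarrow t$.

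The step you flag as the ``main obstacle'' is indeed the heart of the matter and is more delicate than you indicate. The issue is that ``$\Spf$ of the coordinate ring is $\Gm$ with the right formal group law'' does not by itself determine a coordinate: $\Gm/\Z_2$ has automorphism group $\Z_2^\times$, which is exactly the ambiguity parametrizing the group of twists. So the isomorphism of formal groups you invoke via Lemma \ref{def_lem} and Proposition \ref{bock_prop} only shows $1+\tx$ is \emph{some} grouplike element, i.e. $[L]^m$ for some $m\in\Z_2^\times$. To conclude $m=1$ by your method you must unwind the specific choice in the proof of Lemma 6.9, where $\tx$ is taken to be $f^*(t)$ for the \emph{standard} coordinate $t$ on the universal deformation; only that specific normalization, not the bare existence of an isomorphism of formal groups, forces $\alpha=[L]$. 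That is a workable but fiddlier argument than the paper's, which replaces this normalization by a coarse, readily computable invariant (the $d_3$ differential). Both routes buy you the result; the paper's is shorter and avoids chasing choices through the obstruction-theoretic construction of $\varphi_n$, while yours is more conceptual in that it exhibits the literal agreement of the two grouplike elements.
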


\begin{proof}

It is immediate that there is an equivalence $f: {K_2^{\wedge}} \to \tK(1)$.  The result follows if we show that the composite
$$K(\Z, 3) \to BGL_1({K_2^{\wedge}}) \to BGL_1(\tK(1))$$
is the universal twist, where the first map is the determinantal twist, and the second is $BGL_1(f)$.  By definition, it is some power of the universal twist.  It follows from the fact that the differentials in the twisted AHSS for $\tK(1)$ are the same as for $K$ (and hence ${K_2^{\wedge}}$) that that power is $1$.

\end{proof}

Next we consider height 2. 
Ando-Blumberg-Gepner have shown \cite{abg} that the theory of topological modular forms $tmf$ admits a twisting by a class in $H^4(X; \Z)$.  That is, there is an essential map $K(\Z, 4) \to BGL_1 tmf$.  Now, $E_2$ is an elliptic spectrum, and so there is a natural transformation $tmf \to E_2$. Consequently, there is an composite map
$$K(\Z, 4) \to BGL_1 tmf \to BGL_1 E_2.$$
As above, this must be some power of the universal twist of $E_2$.  We suspect that, as above,
 the power is a unit.

\subsection{Applications and examples from physics}
\label{sec-phys}

In this section we provide applications to string theory and M-theory. These 
can also be viewed as the motivation for the constructions in this paper.
Due to the relatively low dimensions involved, the cohomology theories will
arise in low degrees.  

\begin{notation} 

We will denote by $\SqZ^{2k+1}$ the integral lift $\SqZ^{2k+1} = \beta \circ Sq^{2k}$.  This is justified by the Adem relation $Sq^1 Sq^{2k} = Sq^{2k+1}$.  It will either be regarded as an operation from mod 2 cohomology to integral cohomology, or from integral cohomology to itself, by precomposing with reduction mod 2.  In contrast, we will continue to write $\widetilde{Q}_i$ for the integral lift of the Milnor primitive.

\end{notation}

\vspace{3mm}
Motivated by structures in string theory, one of the authors conjectured 
in \cite{S1, S2} that the first nontrivial 
differential for the AHSS for (the then also conjectured) 
twisted second integral Morava K-theory at the prime $p=2$
is of the form 
$d_7= \widetilde{Q}_2 + H_7$. 
\footnote{In this section we will specify the degree of the cohomology class $H$ by a subscript.}
The untwisted differential, given by the Milnor primitive 
$\widetilde{Q}_2$, 
is shown  in \cite{KS1} to vanish precisely when the seventh integral 
Stiefel-Whitney class $W_7$ is $0$.  This gives a necessary and sufficient condition for manifolds of dimension at most 12 to be $\tK(2)$-orientable. Thus, the effect of the twist on the differential is 
given by  $H_7 \in H^7(X; \Z)$ coming from a degree seven field on spacetime $X$.

\vspace{3mm}
Indeed, specializing theorem \ref{int2_thm} for the case $n=2$, we get
the differential 
\beqn
d_7(xv_2^k)=(\tQ_2(x) + (-1)^{|x|}  (x\cup \tQ_{1}(H))v_n^{k-1}\;.
\label{d7 twist}
\eeqn
Since $\tQ_1=\SqZ^3$ then we have 
that the vanishing of primary differential \eqref{d7 twist} is equivalent to vanishing of 
the characterstic class $W_7 + \SqZ^3 H_4$. Structures defined by the vanishing of classes 
$W_7 + \alpha_7$, where $\alpha_7$ is a degree seven integral class, are 
introduced in \cite{S3}, where they were called {\it twisted String${}^{K(\Z,3)}$
structures}.  
Therefore, we have 

\begin{prop}
$\widetilde{K}(2)(-, H_4)$, the twisted second integral Morava K-theory at the prime $p=2$ is oriented with respect to the twisted String${}^{K(\Z,3)}$-structure, given by the condition
\beqn \label{w_7_eq}
W_7 + \SqZ^3 H_4=0.
\eeqn
\label{prop string}
\end{prop}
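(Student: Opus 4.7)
The plan is to specialize Theorem \ref{int2_thm} to $n=2$ and read off the orientation obstruction from the first nontrivial differential of the twisted AHSS evaluated on the Thom class of the tangent bundle of the spacetime $X$. With $n=2$, the first possibly nonzero differential is $d_7$, with formula
\begin{equation*}
d_7(xv_2^k) = \bigl(\tQ_2(x) + (-1)^{|x|}\, x \cup \tQ_1(H_4)\bigr)v_2^{k-1}.
\end{equation*}
Using the identification $\tQ_1 = \SqZ^3$ recorded immediately after the theorem, the twist contribution becomes $x \cup \SqZ^3(H_4)$.

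Next I would apply this differential to the Thom class $U$ associated to the tangent bundle of $X$. The $\tK(2)(-,H_4)$-orientability of $X$ is equivalent, via the twisted AHSS for the relevant Thom spectrum, to the vanishing of the tAH differentials on $U$; in the dimension range of interest (spacetimes of dimension at most $12$) $d_7$ is the only possibly nonzero differential, so orientability reduces to the single condition $d_7(U)=0$. Invoking the computation of \cite{KS1} that in the untwisted setting $\tQ_2$ acts on the Thom class as multiplication by $W_7$, and combining with the cup-product twist term via the module structure of the tAH over the untwisted AHSS (property 4 following Theorem \ref{ahss_thm}), one obtains
\begin{equation*}
d_7(U) = \bigl(W_7 + \SqZ^3 H_4\bigr) \cdot U,
\end{equation*}
whose vanishing is precisely equation \eqref{w_7_eq}, i.e.\ the defining condition of a twisted String${}^{K(\Z,3)}$ structure from \cite{S3}.

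The main obstacle is the Wu-formula manipulation showing $\tQ_2(U) = W_7 \cdot U$ in the untwisted case; this was already established in \cite{KS1}, so here it is simply cited. Once granted, the twisted case reduces to specializing Theorem \ref{int2_thm}, identifying $\tQ_1 = \SqZ^3$, and noting that the twist enters linearly as an extra cup product with $\SqZ^3(H_4)$ on top of the untwisted Milnor primitive action. Higher tAH differentials lie above the relevant degree range and so do not contribute further obstructions.
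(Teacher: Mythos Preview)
Your proposal is correct and follows essentially the same approach as the paper: specialize Theorem \ref{int2_thm} to $n=2$, identify $\tQ_1=\SqZ^3$, and invoke the computation from \cite{KS1} that $\tQ_2$ acts on the orientation class by $W_7$ to conclude that the first tAH obstruction is $W_7+\SqZ^3 H_4$. If anything, you are slightly more explicit than the paper in phrasing this via the Thom class and the module structure of the twisted AHSS over the untwisted one; the paper simply states that the vanishing of the primary differential \eqref{d7 twist} is equivalent to the vanishing of $W_7+\SqZ^3 H_4$ and then names the resulting condition as the twisted String${}^{K(\Z,3)}$ structure of \cite{S3}.
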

That is, a manifold $X$ of dimension at most 12 with twist $H_4$ satisfying $(\ref{w_7_eq})$ has an orientation class in $\widetilde{K}(2)(-, H_4)$, and thus satisfies Poincar\'{e} duality
in that theory. 

We have seen that the first differential in the Atiyah-Hirzebruch spectral 
sequence in twisted $n$th integral Morava K-theory $\widetilde{K}(n)$ at
the prime $p=2$ is $d_{2^{n+1}-1}$ and is given by the integral lift of the 
Milnor primitive twisted by an integral class of the same degree. 
We now show that for low degrees, namely $n=2$ and $n=3$, the
vanishing of these differentials follows from twisted String 
structures and twisted Fivebrane structures, respectively.

\vspace{3mm}
A twisted String structure is defined by the obstruction class \cite{Wa, SSS3} 
$\frac{1}{2}p_1 + \alpha_4$ being zero, where $\alpha_4$ is a degree four integral class. 
Applying $\SqZ^3=\beta Sq^2$ to this expression gives
$W_7 + \SqZ^3 \alpha_4$. The vanishing of this expression is the vanishing of the
first differential in the Atiyah-Hirzebruch spectral sequence in twisted second integral 
Morava K-theory at the prime $p=2$ with the twist given by the integral class 
$\alpha_4$. Note that the first Spin characteristic class
$P_1=\frac{1}{2}p_1$ pulled back via the classifying map from $H^4(B{\rm Spin}; \Z)$
satisfies $P_1= w_4$ mod 2. 

\vspace{3mm}
Next we consider a twisted Fivebrane structure \cite{SSS3}. The obstruction class
in this case is $\frac{1}{6}p_2 + \alpha_8$, where $\alpha_8$ is a degree eight integral class.
Since we are working at the prime $p=2$, we can take the first factor to be 
$\frac{1}{2}p_2$, that is the second Spin characteristic class $P_2$ pulled back via the
classifying map from $H^8(B{\rm Spin}; \Z)$. 
Note that $P_2$ satisfies $P_2= w_8$ mod 2.

Applying $\SqZ^7=\beta Sq^6 = \SqZ^3 Sq^4$ to
the above obstruction gives $W_{15} + \SqZ^7\alpha_8$.
This can be seen as follows. Assuming the Spin condition,
the Wu formula gives 
$Sq^7 w_8 = w_7 w_8 + w_6 w_9 + w_5 w_{10} + w_4 w_{11} + w_{15}$
Further imposing the condition $w_4=0$ kills $w_5$ (since our manifolds
are oriented), $w_6=Sq^2 w_4 + w_2 w_4$ (by the Spin condition),
and $w_7=Sq^3 w_4=0$. At the integral level, we assume the 
String condition $P_1=\lambda=0$, which is a natural condition to impose
since we are dealing with Fivebrane structures. 

Now let us make the assumption that there is an integral class $H_5$ such that $\SqZ^3 H_5 = \alpha_8$.  A computation gives $\tQ_2 \tQ_1 H_5 = \SqZ^7 \SqZ^3 H_5 = \SqZ^7 \alpha_8$.  Therefore, if we twist the third integral Morava K-theory of this space by $H_5$,  we can identify $W_{15} + \SqZ^7 \alpha_8$ as the first differential in the tAHSS.  Thus the twisted Fivebrane structure ensures the vanishing of the first differential.

We summarize what we have in the following 

\begin{prop} 
A twisted Fivebrane structure of the form $W_{15} + \SqZ^7 (\SqZ^3 H_5)$ on a String manifold implies the vanishing of the first obstruction to 
orientation with respect to twisted third integral Morava K-theory
$\widetilde{K}(3)(-, H_{5})$ 
at the prime $p=2$. 
\end{prop}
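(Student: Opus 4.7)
The plan is to specialize Theorem \ref{int2_thm} to $n=3$, read off the first obstruction to orientation for $\tK(3)(-, H_5)$, and then recognize it as the image under the integral Steenrod operation $\SqZ^7$ of the twisted Fivebrane obstruction $P_2 + \SqZ^3 H_5$.

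By Theorem \ref{int2_thm} with $n=3$, the first possibly nontrivial differential in the twisted AHSS converging to $\tK(3)^*(X; H_5)$ is
\[
d_{15}(x v_3^k) = \bigl(\tQ_3(x) + (-1)^{|x|}\, x \cup \tQ_2 \tQ_1(H_5)\bigr) v_3^{k-1}.
\]
Evaluated on the unit (equivalently, applied to the fundamental class of a manifold of suitably small dimension), its vanishing is the first obstruction to $\tK(3)(-, H_5)$-orientation. In direct analogy with the identification of $\tQ_2$ on the fundamental class with $W_7$ used in the $n=2$ case of Proposition \ref{prop string} (following the argument of KS1), $\tQ_3$ on the fundamental class is represented by $W_{15}$. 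On the twist side, $\tQ_1 = \SqZ^3$ (as already recorded in the paper) and the Adem-relation identity $\tQ_2 \tQ_1 = \SqZ^7 \SqZ^3$ reduce the obstruction to the vanishing of
\[
W_{15} + \SqZ^7 \SqZ^3(H_5).
\]

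Next I would recognize this expression as the image of the twisted Fivebrane obstruction $P_2 + \SqZ^3 H_5$ under the stable operation $\SqZ^7$. The twisted summand is $\SqZ^7 \SqZ^3(H_5)$ by naturality of Steenrod operations. For the characteristic-class summand, reduce mod $2$: $P_2 \equiv w_8$, and the Wu formula gives
\[
Sq^7 w_8 = w_7 w_8 + w_6 w_9 + w_5 w_{10} + w_4 w_{11} + w_{15}.
\]
The String condition forces $w_4 = 0$, hence $w_7 = Sq^3 w_4 = 0$ and $w_6 = Sq^2 w_4 + w_2 w_4 = 0$ (using $w_2 = 0$ from Spin); orientability combined with $w_4 = 0$ kills $w_5$; so $Sq^7 w_8 = w_{15}$. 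Lifting by the integral Bockstein gives $\SqZ^7 P_2 = \beta w_{15} = W_{15}$, whence
\[
\SqZ^7\bigl(P_2 + \SqZ^3 H_5\bigr) = W_{15} + \SqZ^7 \SqZ^3(H_5).
\]
Thus the twisted Fivebrane structure, which by hypothesis presents the obstruction class in precisely the stated form, forces the first $\tK(3)(-, H_5)$-orientation obstruction to vanish.

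The main difficulty is the Wu-formula bookkeeping under the combined Spin, String, and orientability hypotheses — in particular, verifying that every auxiliary summand in $Sq^7 w_8$ other than $w_{15}$ is killed — together with justifying the Wu-type identification $\tQ_3 \leftrightarrow W_{15}$ on the fundamental class; the latter is a direct generalization of the $n=2$ argument from KS1 but requires care with the integral lifts of the higher Milnor primitives.
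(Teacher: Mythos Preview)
Your proposal is correct and follows essentially the same route as the paper: specialize the $n=3$ case of Theorem~\ref{int2_thm}, identify $\tQ_2\tQ_1 = \SqZ^7 \SqZ^3$, reduce $P_2 \equiv w_8$ mod $2$, and use the Wu formula together with the String hypothesis to collapse $Sq^7 w_8$ to $w_{15}$. The only cosmetic difference is in orientation of the argument: the paper starts from the twisted Fivebrane obstruction and pushes forward under $\SqZ^7$ to land on the tAHSS differential, whereas you start from the differential and pull back to the Fivebrane class; the computations are identical.
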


Similarly, and as a special case of proposition \ref{prop string},
 a twisted String structure implies the vanishing of the first obstruction to orientation
with respect to twisted second integral Morava K-theory $\widetilde{K}(2)(-, H_4)$.

\vspace{2mm}
We now provide two concrete examples from physics.

\paragraph{Example 1: Classes in heterotic string theory.}
Heterotic string theory is defined on two disconnected 
components of 10-dimensional spacetime $X^{10}$ 
each with an $E_8$ vector bundle. Let $a$ and $b$ be the
degree four characteristic classes characterizing these 
bundles, and let $\lambda=P_1$ be the first Spin characteristic class
$\frac{1}{2}p_1$ of $X^{10}$. Anomaly cancellation imposes
the linear relation $a + b = \lambda$ (see \cite{DMW}). 
The classes $a$ and $b$  can a priori take any integral value,
and can be torsion. 
Now we assume, in the spirit of \cite{DMW},
 that one of the classes $a$ and $b$ 
can be lifted to K-theory. Hence, let us take $a$ to satisfy 
$\SqZ^3 a=0$. From the linear relation above, we can see
that this condition translates into a corresponding condition
on $\lambda -b$, namely $\SqZ^3(\lambda - b)=0$. 
Since the Steenrod operation is linear and $\SqZ^3\lambda=W_7$,
we get from this that 
\[
W_7(X^{10}) + \SqZ^3 b=0\;.
\]
Therefore, we have the following 
\begin{thm}
Consider heterotic string theory on 10-dimensional Spin manifold $X^{10}$
and with $E_8$ bundles on the two boundary components characterized by 
the degree four characteristic classes $a$ and $b$. 
Assuming that $a$ satisfies $Sq^3 a=0$, the anomaly cancellation requires 
$X^{10}$ to be oriented with respect to twisted second integral Morava K-theory 
$\widetilde{K}(2)(-, b)$ at the prime $p=2$, where the twist $b$ manifests itself through the class $H_7 = Sq^3 b$ in the AHSS.
\label{thm het}
\end{thm}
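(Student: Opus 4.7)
The plan is to reduce the theorem to the twisted orientation criterion established in Proposition \ref{prop string} via a short integral cohomology manipulation. First I would apply the integral Steenrod operation $\SqZ^3$ to both sides of the heterotic anomaly cancellation relation $a + b = \lambda$ to obtain
\[
\SqZ^3 a + \SqZ^3 b = \SqZ^3 \lambda.
\]
The hypothesis $\SqZ^3 a = 0$ collapses this to $\SqZ^3 b = \SqZ^3 \lambda$, which, working at the prime $2$, I rewrite as $W_7 + \SqZ^3 b = 0 \in H^7(X^{10};\Z)$ once $\SqZ^3 \lambda$ has been identified with $W_7$.

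The identification $\SqZ^3 \lambda = W_7$ is the only nontrivial computational input, but it is standard and is already invoked in \cite{KS1}: since $\lambda = P_1$ reduces mod $2$ to $w_4$ on a Spin manifold and $\SqZ^3 = \beta \circ Sq^2$, the Wu formula together with the Spin condition ($w_2 = 0$, hence $w_2 w_4 = 0$) gives $Sq^2 w_4 = w_6$, so $\SqZ^3 \lambda = \beta w_6 = W_7$. I would simply cite this fact rather than reprove it.

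With the relation $W_7 + \SqZ^3 b = 0$ in hand, the conclusion is immediate from Proposition \ref{prop string}: taking the twist to be $H_4 = b \in H^4(X^{10};\Z)$, the vanishing of the first (and only obstructing) differential $d_7$ in the twisted Atiyah--Hirzebruch spectral sequence of Theorem \ref{int2_thm} specialized to $n=2$ is precisely the condition \eqref{w_7_eq}. Since $\dim X^{10} = 10 \leq 12$, this is enough to produce the orientation class in $\widetilde{K}(2)(-,b)$, and the class $H_7$ appearing in the AHSS differential is exactly $\tQ_1(b) = \SqZ^3 b$, as claimed. The main conceptual point, rather than a genuine obstacle, is recognizing that the natural degree of the twist for $\widetilde{K}(2)$ is $n+2=4$ (so $b$ itself serves as the twist), while its image under $\tQ_1 = \SqZ^3$ is what enters the seventh-degree obstruction combined with $W_7$.
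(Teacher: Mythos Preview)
Your proposal is correct and follows essentially the same route as the paper: apply $\SqZ^3$ to the anomaly relation $a+b=\lambda$, use the hypothesis $\SqZ^3 a=0$ and the identification $\SqZ^3\lambda=W_7$, and then invoke the twisted orientation criterion of Proposition~\ref{prop string}. Your write-up even supplies a bit more detail than the paper does on why $\SqZ^3\lambda=W_7$ on a Spin manifold, which is a welcome addition.
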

One justification for the assumption in the theorem is that the class
$a$ involves the Chern character in cohomology. The breaking of 
symmetry between $a$ and $b$ is possible because each is defined on 
a separate component of spacetime, and this 
 is common in the process 
dimensional reduction whose aim is the search for realistic models in four dimensions.

\vspace{3mm}
In the formulation of the partition function of the C-field in 
M-theory, one encounters an anomaly given by the seventh integral 
Stiefel-Whitney class $W_7$ of spacetime \cite{DMW}.
This anomaly is cancelled in \cite{KS1} by insisting that spacetime 
be oriented with respect to integral Morava K-theory $\widetilde{K}(2)$ at the prime 
$p=2$.  Thus theorem \ref{thm het}
 generalizes the corresponding result in \cite{KS1} to the twisted case (although
 in a slightly different, but related, setting).

\paragraph{Orientation with respect to twisted relative Morava K-theory $\widetilde{K}(2)$.}
The relation of the first differential to the Stiefel-Whitney classes 
in the untwisted case is given in \cite{KS1}.  We now work out the analog in the 
relative twisted case. As in \cite{KS1} we focus on manifolds $X$ of dimension 
at most 12, as appropriate for
string theory. The integral lift $\widetilde{Q}_2$ of the Milnor primitive 
is given by $\beta Sq^6$ plus elements of lower Cartan-Serre filtration. 
Here the Steenrod square is the relative cohomology operation 
$Sq^i: H^m (X, A; \Z / 2) \to H^{m+i}(X, A; \Z / 2)$ and $\beta$ is the relative
Bockstein. The main application occurs 
 when $X$  is 12-dimensional and $A=\partial X$ is the 
11-dimensional boundary.

We have

\begin{prop}
For a relative pair $(X, A)$ with $X$ of dimension $\leq 12$,
the orientation condition with respect to twisted relative integral
Morava K-theory at the prime 2, $\widetilde{K}(2)(X, A; H_4)$, 
is $W_7(X, A) + Sq^3 H_4=0$.
\end{prop}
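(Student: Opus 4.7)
The plan is to analyze the first nontrivial differential in the twisted Atiyah--Hirzebruch spectral sequence for the relative theory $\widetilde{K}(2)^*(X, A; H_4)$ and show that its vanishing on the relative fundamental class reduces to the stated condition.

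First, I invoke the relative version of Theorem \ref{int2_thm} with $n = 2$: the first possibly nontrivial differential in the tAHSS is $d_7$, which on a relative class $x\, v_2^k$ acts as
\[
d_7(x\, v_2^k) = \bigl(\tQ_2(x) + (-1)^{|x|}\, x \cup \tQ_1(H_4)\bigr)\, v_2^{k-1}.
\]
Since $\tQ_1 = \SqZ^3 = \beta \circ Sq^2$, the twist contribution is exactly $x \cup \SqZ^3 H_4$, which on the unit class is $\SqZ^3 H_4$. Orientability of $(X,A)$ with respect to $\widetilde{K}(2)(-, H_4)$ amounts to the (relative) Thom/fundamental class being a permanent cycle in the tAHSS, and in our range of dimensions the first potential obstruction is precisely this $d_7$.

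Next, I follow the argument of \cite{KS1} to identify $\tQ_2$ on the fundamental class with $W_7(X,A)$. The key facts are that $\tQ_2 = \beta \circ Sq^6$ modulo operations of lower Cartan--Serre filtration, and that for $(X, A)$ with $\dim X \leq 12$ those lower-filtration terms vanish on the relevant class for purely dimensional reasons. Applying $\beta\circ Sq^6$ to the relative fundamental class and invoking the relative Wu formula (together with the naturality of the Bockstein) then yields the relative seventh integral Stiefel--Whitney class $W_7(X,A)$, exactly as in \cite{KS1}.

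Combining the two computations, the first obstruction in the tAHSS is represented by
\[
W_7(X,A) + \SqZ^3 H_4 \in H^7(X, A; \Z),
\]
and its vanishing is the claimed orientation condition. The main (and only nontrivial) obstacle is the step identifying $\tQ_2$ with $W_7(X,A)$ on the fundamental class, but this is handled exactly as in the untwisted argument of \cite{KS1}: the dimension hypothesis $\dim X \leq 12$ suppresses all lower-filtration summands in $\tQ_2$, so the reduction to $\beta Sq^6$ and the relative Wu formula go through verbatim, and the twist contribution $x \cup \SqZ^3 H_4$ supplied by Theorem \ref{int2_thm} completes the proof.
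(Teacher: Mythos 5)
Your proposal is correct and takes essentially the same route as the paper: the paper does not give a formal proof of this proposition, but the preamble immediately before it sketches exactly your argument — specialize the relative form of the tAHSS differential from Theorem \ref{int2_thm} to $n=2$, use $\tQ_1 = \SqZ^3$ to read off the twist contribution $\SqZ^3 H_4$, and identify $\tQ_2$ on the fundamental class with $W_7(X,A)$ by writing $\tQ_2 = \beta Sq^6$ plus lower Cartan--Serre filtration terms (which vanish because $\dim X \leq 12$) and invoking the relative Wu formula as in \cite{KS1}.
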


\paragraph{Applications of twisted Morava E-theory.}
To consider twisted Morva E-theory, we essentially work with the above examples, 
 requiring in this case that our spacetime be also Spin.
In \cite{KS1} it was shown that under this condition, the 
orientation condition in Morava $E_2$-theory at the prime $p=2$ 
is  given by $W_7=0$. That is, the Spin assumption ensures that $d_3 = 0$, and so the
first differential is $d_7$, given by the integral lift of the Milnor primitive $\widetilde{Q}_2$.
In the twisted Morava E(2)-theory constructed above, we will have an 
orientation condition $W_7 + \SqZ^3 H_4=0$, as in the case for $\tK(2)(-, H_4)$.

\begin{thm}
A Spin spacetime is oriented with respect to twisted Morava $E_2$-theory at the 
prime $p=2$ if $W_7 +  \SqZ^3 H_4=0$. 
\end{thm}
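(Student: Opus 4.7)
The plan is to mirror, at the $E_2$-level, the argument that established Proposition \ref{prop string} for $\tK(2)$, and then to use the reduction of twisted Morava E-theory to twisted integral Morava K-theory already remarked in the text to transport the differential formula upstairs.

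First, I would set up a twisted Atiyah-Hirzebruch spectral sequence converging to $E_2^*(X; H)$ with $E_2^{p,q} = H^p(X; \pi^q E_2)$. The construction is the cellular-filtration argument of Theorem \ref{ahss_thm}, which uses only the $E_\infty$ Thom-spectrum description $X^H$; nothing there is special to $K(n)$. Next, I would identify the first potentially nontrivial differential as
\[
d_7(x v_2^k) = \bigl(\tQ_2(x) + (-1)^{|x|} x \cup \SqZ^3 H_4\bigr) v_2^{k-1},
\]
by running the arguments of Lemma \ref{diff_lem} and Lemma \ref{tah_lem} with $E_2$ in place of $K(n)$. Proposition \ref{bock_prop} exhibits the relevant computation in $E_2^* K(\Z,3)$ as a lift of the Ravenel-Wilson computation in $K(2)^* K(\Z,3)$, and Proposition \ref{nontriv_prop} shows that the $E_2$-twist reduces mod $\m$ to the $K(2)$-twist. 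Consequently the transgression $d_7^{\mathrm{AHS}}(\tx) = \tQ_1(H) = \SqZ^3 H_4$ in the Atiyah-Hirzebruch-Serre spectral sequence for the principal $K(\Z, 3)$-bundle $P_H \to X$ holds integrally, and the collapse map $E_2$-homology of $P_H$ to twisted $E_2$-homology of $X$, as in the proof of Theorem \ref{ahss_thm}, yields the formula.

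Third, on a Spin manifold the condition $w_2 = 0$ forces $W_3 = \beta w_2 = 0$, so the only earlier candidate differential $d_3$ annihilates the Thom class $U$, and intermediate differentials vanish for sparsity and degree reasons in the dimensions relevant to physics (cf.\ \cite{KS1}). Thus the first obstruction to a Thom class of the (stable) tangent bundle in twisted $E_2^*(-; H_4)$ is precisely $d_7(U)$. Applying the formula above and using the Wu-type identification $\tQ_2(U) = W_7 \cdot U$ established in \cite{KS1} for Spin manifolds in the relevant dimension range, one obtains $d_7(U) = (W_7 + \SqZ^3 H_4) \cdot U$. Vanishing of $W_7 + \SqZ^3 H_4$ therefore suffices for the Thom class to survive, establishing twisted $E_2$-orientability.

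The main obstacle is the second step: verifying that the Ravenel-Wilson-driven computation of the first differential transports cleanly from twisted $K(2)$ to twisted $E_2$. This requires controlling the transgression through the Bockstein spectral sequence of Proposition \ref{bock_prop} and confirming that it is the integral lift $\tQ_1 = \SqZ^3$ that appears, rather than merely its mod-2 reduction. Once this compatibility is in place, the remaining steps are straightforward adaptations of the arguments in \cite{KS1} and of Proposition \ref{prop string}.
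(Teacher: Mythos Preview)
Your proposal is correct and follows essentially the same approach as the paper. The paper's own justification is just the short paragraph preceding the theorem: the Spin assumption forces $d_3=0$ (via \cite{KS1}) so that $d_7$ is the first differential, and its twisted form is asserted to be the same as for $\tK(2)(-, H_4)$ (Proposition \ref{prop string}); you spell these steps out more carefully and rightly flag the lift of the differential formula from $K(2)$ to $E_2$ through the Bockstein spectral sequence as the technical point the paper leaves implicit.
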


Instead of repeating the above examples for the Spin case, which amounts to 
replacing Morava K-theory
with Morava E-theory, we will provide another
application.

\paragraph{Example 2. The partition function in M-theory on a circle.}
We will consider the setting of \cite{DMW}. 
M-theory on a Spin 11-dimensional manifold $Y^{11}$ has a degree four 
field whose characteristic class $G$ 
is essentially the  class $a$ of an $E_8$ bundle over
$Y^{11}$. Taking $Y^{11}=X^{10} \times S^1$, one would like 
to relate $a$ to classes on $X^{10}$ that admit a K-theoretic 
description. This places constraints on $G$. 
The theory is characterized by the partition function, ideally a complex
number with phase $\phi_a=(-1)^{f(a)}$, where $f(a)$ is the mod 2 index of 
the Dirac operator coupled to the $E_8$ bundle. This is a topological
invariant in ten dimensions which is not quite additive but rather satisfies a quadratic 
refinement
$$f(a + a')= f(a) + f(a') + \langle a \cup Sq^2 a', [X^{10}] \rangle$$
for $a, a'\in H^4(X^{10}; \Z)$. Evaluation 
of the partition function requires dealing with the torsion pairing 
$T: H^4_{\rm tors}(X^{10}; \Z) \times H^7_{\rm tors}(X^{10}; \Z) \to \Z/2$
defined by $T(a, b)= \langle a \cup c, [X^{10}] \rangle$, where 
$\beta (c)=b$, and is a Pontrjagin duality between 
$H^4_{\rm tors}$ and $H^7_{\rm tors}$. 

In \cite{DMW} a variation under $a \mapsto a + 2b$, for $b$ torsion, 
was considered; this leads to the condition $W_7(X^{10})=0$. 
We will instead consider the variation under $a \mapsto a + 3b$ for $b$
satisfying $f(b)=0$. This does not necessarily imply that 
$f(2b)=0$ because of the cross-term coming from the quadratic refinement. 
We then have 
$$
f(a + 3b)= f(a) + f(2b) + \int_{X^{10}} b \cup Sq^2 a\;.
$$

Now, using \cite{stong}, or equation (3.23) in \cite{DMW}, we have $f(2b)=\int_{X^{10}} b \cup Sq^2 \lambda$, where $\lambda= \frac{1}{2}p_1(Y^{11})$.
The phase of the partition function is then invariant under the variation 
of $a$ by $2b$ if
$\int_{X^{10}} b \cup Sq^2 \lambda + \int_{X^{10}} b \cup Sq^2 a=0$.
Since $b$ is torsion, this is satisfied if $Sq^3 \lambda + Sq^3 a=0$, and so 
$W_7(X^{10}) + \SqZ^3 a=0$. This is the first differential in 
twisted Morava $E_2$-theory.
Therefore, we have

\begin{prop}
The partition function is invariant (in the above sense) if spacetime $X^{10}$ is 
oriented with respect to twisted Morava $E_2$-theory, with the twist given by 
$a$, the class of the $E_8$ bundle over $X^{10}$. 
\end{prop}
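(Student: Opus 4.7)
The plan is to trace how the phase $\phi_{a}=(-1)^{f(a)}$ transforms under the shift $a\mapsto a+3b$ with $b\in H^{4}_{\rm tors}(X^{10};\Z)$ satisfying $f(b)=0$, express the resulting obstruction as a torsion pairing, and then recognize its vanishing as the first nontrivial differential of the tAHSS for twisted Morava $E_{2}$-theory. First I would iterate the quadratic refinement
\[
f(a+a')=f(a)+f(a')+\langle a\cup Sq^{2}a',[X^{10}]\rangle
\]
to expand $f(a+3b)$. Using $f(b)=0$ and the mod-$2$ bilinearity of the cross-term, the calculation collapses to
\[
f(a+3b)=f(a)+f(2b)+\int_{X^{10}} b\cup Sq^{2}a.
\]

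Next I would invoke the identity cited from \cite{stong} and \cite{DMW}(3.23), namely $f(2b)=\int_{X^{10}} b\cup Sq^{2}\lambda$ for torsion $b$ on a Spin manifold, where $\lambda=\tfrac{1}{2}p_{1}(Y^{11})$. Substituting, invariance of the phase under $a\mapsto a+3b$ for all such $b$ becomes the statement
\[
\int_{X^{10}} b\cup\bigl(Sq^{2}\lambda+Sq^{2}a\bigr)=0 \qquad \text{for every torsion } b\in H^{4}(X^{10};\Z).
\]

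At this point the main mathematical subtlety enters: the Pontrjagin duality between $H^{4}_{\rm tors}$ and $H^{7}_{\rm tors}$ provided by the torsion pairing $T$. Since $b$ is torsion, the mod-$2$ cup pairing against the degree-$6$ class $Sq^{2}\lambda+Sq^{2}a$ factors through $T$, and so it vanishes on all such $b$ precisely when the Bockstein $\beta(Sq^{2}\lambda+Sq^{2}a)$ vanishes integrally. Using $\SqZ^{3}=\beta\circ Sq^{2}$, and the fact that in the Spin case $\lambda$ reduces mod $2$ to $w_{4}$ so that $\SqZ^{3}\lambda=\beta Sq^{2}w_{4}=W_{7}$, this integral condition becomes
\[
W_{7}(X^{10})+\SqZ^{3}a=0.
\]

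Finally I would match this with the orientation condition. By the earlier theorem on twisted Morava $E_{2}$-theory on a Spin manifold, the differential $d_{3}$ vanishes automatically and the first potentially nontrivial differential in the tAHSS is $d_{7}=\tQ_{2}+\tQ_{1}(H_{4})$; specializing $H_{4}=a$ and using $\tQ_{2}\equiv\SqZ^{3}\circ Sq^{4}+\cdots$ on classes of degree $\le 12$ and $\tQ_{1}=\SqZ^{3}$, this differential applied to the generator is exactly $W_{7}+\SqZ^{3}a$. Thus vanishing of $d_{7}$, i.e., orientability with respect to twisted Morava $E_{2}$-theory with twist $a$, is equivalent to invariance of the phase. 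The main obstacle is the Pontrjagin-duality step: one must be careful that $Sq^{2}(3b)\equiv Sq^{2}b\bmod 2$ and that the torsion pairing indeed identifies the mod-$2$ annihilator of all torsion $b$ with the image of the integral Bockstein, so that $\int b\cup(\cdot)=0$ genuinely translates into an integral cohomological equation rather than merely a mod-$2$ one.
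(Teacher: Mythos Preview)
Your argument is essentially the paper's own: expand $f(a+3b)$ via the quadratic refinement, substitute $f(2b)=\int b\cup Sq^{2}\lambda$ from \cite{stong,DMW}, and read off $W_{7}+\SqZ^{3}a=0$ as the orientation condition for twisted $E_{2}$-theory. The only substantive difference is in step 5: the paper simply notes ``since $b$ is torsion, this is satisfied if $Sq^{3}\lambda+Sq^{3}a=0$'', whereas you invoke the full Pontrjagin duality between $H^{4}_{\rm tors}$ and $H^{7}_{\rm tors}$. For the proposition as stated (an \emph{if}, not an \emph{iff}) you do not need duality: if $\SqZ^{3}(\lambda+a)=\beta Sq^{2}(\lambda+a)=0$, then $Sq^{2}(\lambda+a)$ is the mod-$2$ reduction of some integral class $z\in H^{6}(X;\Z)$, and $\int b\cup Sq^{2}(\lambda+a)\equiv\langle b\cup z,[X]\rangle\bmod 2$ vanishes because $b\cup z$ is torsion in $H^{10}(X;\Z)\cong\Z$. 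Your Pontrjagin-duality route would instead establish the converse, which is why you end with ``equivalent to''; that is a slight overreach relative to the proposition, but the forward implication you need is contained in your argument.
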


This is an extension to the twisted case of the 
result in \cite{KS1}.

\vspace{3mm}
We now give another occurrence of this condition in the above setting.
The  variation $a \mapsto a + 2b$ for $b$ torsion leads to $W_7(X^{10})=0$
\cite{DMW}. Then taking $c \in L=H^4_{\rm tors}/2H^4_{\rm tors}$, a vector space
over $\Z / 2$, the function $f(a +c)$ is linear in $c$ on the 
vector space $L'=\{ c \in L ~|~ Sq^3 a=0\}$. Then Pontrjagin duality 
of the torsion pairing $T$ implies that there exists $P \in H^7(X^{10}; \Z)$
with $f(c)=T(c, P)$ for all $c$. Then 
we have $f(a + c)= f(a)  + T(c, Sq^3 a + P)$. Again, requiring invariance of the phase
imposes the condition $Sq^3a =P$ mod $Sq^3(H^4_{\rm tors})$. 
Now, if we impose the condition
$a= \lambda$
 on the class $a$, then we have 
$W_7(X^{10})=Sq^3a$.
Therefore, again 

\begin{prop}
Consistency (in the above sense) of the M-theory partition function 
requires orientation with respect to twisted $E_2$-theory, with the twist 
again given by the class of the $E_8$  bundle. 
\end{prop}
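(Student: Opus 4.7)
The plan is to extract the M-theoretic consistency condition by analyzing a specific variation of the class $a$, and then recognize the resulting cohomological equation as the vanishing of the first nontrivial differential in the twisted Atiyah--Hirzebruch spectral sequence for twisted Morava $E_2$-theory twisted by $a$. The bridge between the physics and the twisted cohomology theory is Theorem \ref{int2_thm} (with $n=2$), combined with the remark that twisted Morava $E_2$-theory reduces via $\pi:\widehat{E(2)}\to \tK(2)$ compatibly with the universal twist, so the first differential in the twisted $E_2$ AHSS has the same leading form $W_7+\SqZ^3 H_4$ as in $\tK(2)$.

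First I would take as input the $W_7(X^{10})=0$ reduction from the variation $a\mapsto a+2b$ of \cite{DMW} discussed immediately above, and then consider the finer variation $a\mapsto a+c$ with $c$ ranging over the $\Z/2$-vector space $L=H^4_{\rm tors}/2H^4_{\rm tors}$. Using the quadratic refinement of $f$ together with the vanishing already established, one restricts to the subspace $L'\subset L$ on which $f(a+c)$ is linear in $c$. At this point Pontrjagin duality between $H^4_{\rm tors}$ and $H^7_{\rm tors}$, given by the torsion pairing $T$, produces a class $P\in H^7(X^{10};\Z)$ with $f(c)=T(c,P)$ for all $c\in L'$, leading to the identity $f(a+c)=f(a)+T(c,\SqZ^3 a + P)$.

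Next, demanding invariance of the phase $\phi_a=(-1)^{f(a)}$ under this variation is equivalent to the vanishing of the coefficient in the torsion pairing, giving $\SqZ^3 a\equiv P$ modulo $\SqZ^3(H^4_{\rm tors})$. Imposing the physically motivated specialization $a=\lambda=\tfrac12 p_1$ and using the identity $\SqZ^3\lambda = W_7$, this reduces to $W_7(X^{10})+\SqZ^3 a=0$. By Theorem \ref{int2_thm} applied to $n=2$, this is exactly the vanishing of the first nontrivial differential $d_7$ in the twisted AHSS for $\tK(2)(-;a)$, and hence also for twisted $E_2$-theory with twist $a$. Thus orientability with respect to twisted $E_2$-theory with twist given by the $E_8$ class $a$ is necessary for consistency of the partition function in the indicated sense.

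The main potential obstacle is justifying linearity of $f(a+c)$ on $L'$ and correctly identifying the representing class $P$ via Pontrjagin duality; this is essentially the same quadratic-refinement/duality manipulation used in \cite{DMW}, so the physical input is standard. The genuinely new content is purely identificational: matching the resulting cohomological condition to the first differential in the twisted AHSS for $E_2$-theory provided by Theorem \ref{int2_thm}, which was the central output of the preceding sections.
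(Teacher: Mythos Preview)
Your proposal is correct and follows essentially the same route as the paper: you use the $W_7=0$ reduction from the $a\mapsto a+2b$ variation, pass to the finer $a\mapsto a+c$ variation on $L=H^4_{\rm tors}/2H^4_{\rm tors}$, invoke Pontrjagin duality to produce $P$ and the identity $f(a+c)=f(a)+T(c,\SqZ^3 a+P)$, impose $a=\lambda$ to obtain $W_7=\SqZ^3 a$, and then identify this with the first twisted AHSS differential via Theorem~\ref{int2_thm}. The only minor addition you make beyond the paper is the explicit remark that the $E_2$ twisted AHSS has the same leading differential as $\tK(2)$ via the reduction $\pi:\widehat{E(2)}\to\tK(2)$, which is a helpful clarification.
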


\end{document}